\newtheorem{proposition}{Proposition}[section]
\newtheorem{theorem}[proposition]{Theorem}
\newtheorem{lemma}[proposition]{Lemma}
\newtheorem{definition}[proposition]{Definition}
\newtheorem{remark}[proposition]{Remark}
\newenvironment{proof}{\smallskip\noindent\emph{\textbf{Proof.}}\hspace{1pt}}%
{\hspace{-5pt}{\nobreak\quad\nobreak\hfill\nobreak$\square$\vspace{8pt}%
\par}\smallskip\goodbreak}
\newenvironment{proofof}[1]{\smallskip\noindent\emph{\textbf{Proof of #1.}}%
\hspace{1pt}}{\hspace{-5pt}{\nobreak\quad\nobreak\hfill\nobreak%
$\square$\vspace{8pt}\par}\smallskip\goodbreak}
\newcommand{\Section}[1]{\section{#1}\setcounter{equation}{0}}
\renewcommand{\div}{{\mathrm{div}}}
\newcommand{\Div}{{\mathrm{Div}}}
\renewcommand{\L}[1]{\mathbf{L^#1}}
\newcommand{\Lloc}[1]{\mathbf{L^{#1}_{loc}}}
\newcommand{\C}[1]{\mathbf{C^{#1}}}
\newcommand{\Cc}[1]{\mathbf{C_c^{#1}}}
\newcommand{\W}[2]{\mathbf{W^{#1,#2}}}
\newcommand{\modulo}[1]{{\left|#1\right|}}
\newcommand{\norma}[1]{{\left\|#1\right\|}}
\newcommand{\reali}{{\mathbb{R}}}
\newcommand{\tv}{\mathrm{TV}}
\newcommand{\BV}{\mathbf{BV}}
\newcommand{\Lip}{\mathinner\mathbf{Lip}}
\renewcommand{\epsilon}{\varepsilon}
\renewcommand{\phi}{\varphi}
\newcommand{\rpic}{\overline\reali_+}
\newcommand{\rpis}{\reali_+}
\newcommand{\iiiint}{\int\!\!\int\!\!\int\!\!\int}
\newcommand{\pt}{\partial}
\newcommand{\dt}{\partial_t}
\newcommand{\ds}{\partial_s}
\newcommand{\pd}{{\eta}}
\newcommand{\gd}{{\lambda}}
\title{Stability and Total Variation Estimates\\ on General Scalar
  Balance Laws}
\author{Rinaldo M.~Colombo, Magali Mercier\thanks{Permanent address:
    Universit\'e de Lyon, Universit\'e Lyon 1; 43, bd.~du 11 novembre
    1918; 69622 Villeurbanne Cedex}\/ and Massimiliano
  D.~Rosini\thanks{Supported by INdAM.}\\ Department of Mathematics,
  Brescia University\\ Via Branze 38, 25133 Brescia\\ Italy}
\begin{document}

\maketitle

\begin{abstract}
  \noindent Consider the general scalar balance law $\partial_t u +
  \Div f(t, x,u) = F(t,x,u)$ in several space dimensions. The aim of
  this note is to estimate the dependence of its solutions from the
  flow $f$ and from the source $F$. To this aim, a bound on the total
  variation in the space variables of the solution is obtained. This
  result is then applied to obtain well posedness and stability
  estimates for a balance law with a non local source.

  \smallskip

  \noindent\textit{2000~Mathematics Subject Classification:} 35L65.

  \smallskip

  \noindent\textit{Keywords:} Multi-dimensional scalar conservation
  laws, Kru\v{z}kov entropy solutions.

\end{abstract}

\Section{Introduction}
\label{sec:intro}

The Cauchy problem for a scalar balance law in $N$ space dimension
\begin{equation}
  \label{eq:probv}
  \left\{\begin{array}{l@{\qquad}rcl}
      \partial_t u + \Div  f(t,x,u) = F(t,x,u)
      & (t,x) & \in & \rpis\times\reali^N
      \\
      u(0,x)=u_o(x) & x & \in & \reali^N
      \\
    \end{array}
  \right.
\end{equation}
is well known to admit a unique weak entropy solution, as proved in
the classical result by Kru\v zkov~\cite[Theorem~5]{kruzkov}. The same
paper also provides the basic stability estimate on the dependence of
solutions from the initial data, see~\cite[Theorem~1]{kruzkov}. In the
same setting established in~\cite{kruzkov}, we provide here an
estimate on the dependence of the solutions to~(\ref{eq:probv}) from
the flow $f$, from the source $F$ and recover the known estimate on
the dependence from the initial datum $u_o$. A key intermediate result
is a bound on the total variation of the solution to~(\ref{eq:probv}),
which we provide in Theorem~\ref{teo:tv}.

In the case of a conservation law, i.e.~$F=0$, and with a flow $f$
independent from $t,x$, the dependence of the solution from $f$ was
already considered in~\cite{bouchutperthame}, where also other results
were presented. In this case, the $\tv$ bound is obvious, since $\tv
\left( u(t) \right) \leq \tv(u_o)$. The estimate provided
by~Theorem~\ref{teo:tv} slightly improves the analogous result
in~\cite[Theorem~3.1]{bouchutperthame} (that was already known,
see~\cite{dafermos, Lucier}), which reads (for a suitable absolute
constant $C$)
\begin{displaymath}
  \norma{u(t)-v(t)}_{\L1(\reali^N;\reali)}
  \leq
  \norma{u_o-v_o}_{\L1(\reali^N;\reali)} + C \, \tv(u_o) \, \Lip(f-g) \, t \,.
\end{displaymath}
Our result, given by Theorem~\ref{teo:estimates}, reduces to this
inequality when $f$ and $g$ are not dependent on $t,x$ and $F = G =
0$, but with $C=1$.

An flow dependent also on $x$ was considered in~\cite{chenkarlsen,
  KarlsenRisebro}, though in the special case $f(x,u) = l(x) \, g(u)$,
but with a source term containing a possibly degenerate parabolic
operator. There, estimates on the $\L1$ distance between solutions in
terms of the distance between the flows were obtained, but dependent
from an \emph{a priori} unknown bound on $\tv\left(u(t)\right)$.
Here, with no parabolic operators in the source term, we provide fully
explicit bounds both on $\tv \left( u(t) \right)$ and on the distance
between solutions.  Indeed, remark that with no specific assumptions
on the flow, $\tv \left( u(t) \right)$ may well blow up to $+\infty$
at $t = 0+$, as in the simple case $f(x,u) = \cos x$ with zero initial
datum.

Both the total variation and the stability estimates proved below turn
out to be optimal in some simple cases, in which optimal estimates are
known.

As an example of a possible application, we consider in
Section~\ref{sec:application} a toy model for a radiating gas. This
system was already considered in~\cite{ColomboGuerra, DiFrancesco,
  KN2, KN1, LattanzioMarcati, LinCoulombel, LiuTadmor, Serre2003}. It
consists of a balance law of the type~(\ref{eq:probv}), but with a
source that contains also a non local term, due to the convolution of
the unknown with a suitable kernel. Thanks to the present results, we
prove the well posedness of the model
extending~\cite[Theorem~2.4]{DiFrancesco} to more general flows,
sources and convolution kernels. Stability and total variation
estimates are also provided.

This paper is organized as follows: in Section~\ref{sec:Notation}, we
introduce the notation, state the main results and compare them with
those found in the literature. Section~\ref{sec:application} is
devoted to an application to a radiating gas model. Finally, in
sections~\ref{sec:prooftv} and~\ref{sec:proofcomparison} the detailed
proofs of theorems~\ref{teo:tv} and~\ref{teo:estimates} are provided.

\Section{Notation and Main Results}
\label{sec:Notation}

Denote $\rpic = \left[0, + \infty \right[$ and $\rpis = \left]0,
  +\infty \right[$. Below, $N$ is a positive integer, $\Omega = \rpis
\times \reali^N \times \reali$, $B(x,r)$ denotes the ball in
$\reali^N$ with center $x \in \reali^N$ and radius $r > 0$. The volume
of the unit ball $B(0,1)$ is $\omega_N$. For notational simplicity, we
set $\omega_0 = 1$.  The following relation can be proved using the
expression of $\omega_N$ in terms of the Wallis integral $W_N$:
\begin{equation}
  \label{eq:W}
  \frac{\omega_{N}}{\omega_{N-1}}
  =
  2 \, W_N
  \qquad \mbox{ where } \qquad
  W _N
  =
  \int_0^{\pi/2} (\cos \theta)^N \, \mathrm{d}\theta \,.
\end{equation}
In the present work, $\mathbf{1}_A$ is the characteristic function of
the set $A$ and $\delta_t$ is the Dirac measure centered at
$t$. Besides, for a vector valued function $f = f(x,u)$ with $u =
u(x)$, $\Div f$ stands for the total divergence. On the other hand,
$\div f$, respectively $\nabla f$, denotes the partial divergence,
respectively gradient, with respect to the space variables. Moreover,
$\partial_u$ and $\partial_t$ are the usual partial derivatives. Thus,
$\Div f = \div f + \partial_u f \cdot \nabla u$.

Recall the definition of weak entropy solution to~(\ref{eq:probv}),
see~\cite[Definition~1]{kruzkov}.

\begin{definition}
  \label{def:sol}
  A function $u \in \L\infty ( \rpis \times \reali^N; \reali)$ is a
  weak entropy solution to~(\ref{eq:probv}) if:

  \noindent 1.~for any constant $k \in \reali$ and any test function
  $\varphi \in \Cc{\infty}(\rpis\times\reali^N;\rpic)$
  \begin{equation}
    \label{eq:entropy}
    \begin{array}{l}
      \displaystyle
      \int_{\rpis} \! \int_{\reali^N} \!
      \left[
        (u-k) \, \dt \varphi
        +
        \left( f(t,x,u) - f(t,x,k) \right) \cdot \nabla \varphi
        +
        \left( F(t,x,u) - \div f(t,x,k) \right)\varphi
      \right]
      \\
      \displaystyle
      \qquad \qquad\qquad \qquad
      \times
      \mathrm{sign}(u-k) \,
      \mathrm{d}x \, \mathrm{d}t
      \geq
      0;
    \end{array}
    \!\!\!
  \end{equation}

  \noindent 2.~there exists a set $\mathcal{E}$ of zero measure in
  $\rpic$ such that for $t \in \rpic \setminus \mathcal{E}$ the
  function $u(t,x)$ is defined almost everywhere in $\reali^N$ and for
  any $r > 0$
  \begin{equation}
    \label{eq:data}
    \lim_{t \to 0,\, t \in \rpic \setminus \mathcal{E}}
    \int_{B(0,r)} \modulo{u(t,x) - u_o(x)} \mathrm{d}x
    =
    0 \,.
  \end{equation}
\end{definition}

\noindent Throughout this paper, we refer
to~\cite{AmbrosioFuscoPallara, volperthudjaev} as general references
for the theory of $\BV$~functions. In particular, recall the following
basic definition, see~\cite[Definition~3.4 and
Theorem~3.6]{AmbrosioFuscoPallara}.
\begin{definition}
  Let $u \in \Lloc1(\reali^N; \reali)$. Define
  \begin{eqnarray*}
    \tv(u)
    & = &
    \sup
    \left\{
      \int_{\reali^N} u \, \div \psi \, \mathrm{d}x
      \;\colon\;
      \psi\in\Cc1(\reali^N;\reali^N)
      \mbox{ and }
      \norma{\psi}_{\L\infty(\reali^N;\reali^N)} \le 1
    \right\}
    \\
    \BV (\reali^N;
    \reali)
    & = &
    \left\{
      u \in \Lloc1(\reali^N; \reali)
      \;\colon\;
      \tv (u) < +\infty
    \right\} .
  \end{eqnarray*}
\end{definition}
The following sets of assumptions will be of use below.
\begin{displaymath}
  \begin{array}{cl}
    \mathbf{(H1)}\!\!
    &
    \left\{
      \begin{array}{ll}
        f \in \C2(\Omega; \reali^N)
        &
        F \in \C1(\Omega; \reali)
        \\[5pt]
        \pt_u f\in \L\infty (\Omega; \reali^N)
        \\[5pt]
        \partial_u (F - \div f) \in \L\infty (\Omega; \reali)
        &
        F - \div f \in \L\infty (\Omega; \reali)
      \end{array}
    \right.
    \\[25pt]
    \mathbf{(H2)}\!\!
    &
    \left\{
      \begin{array}{ll}
        f \in \C2(\Omega; \reali^N)
        &
        F \in \C1(\Omega; \reali)
        \\[5pt]
        \nabla \pt_u f\in
        \L\infty(\Omega; \reali^{N\times N})
        &
        \displaystyle
        \int_{\rpis} \!\! \int_{\reali^N} \!
        \norma{\nabla(F - \div f)(t, x, \cdot)}_{\L\infty(\reali;\reali^{N})}
        \, \mathrm{d}x \, \mathrm{d}t
        <
        + \infty
        \\[10pt]
        \partial_t \partial_u f \in \L\infty (\Omega; \reali^N)
        &
        \partial_t F \in \L\infty (\Omega;\reali)
        \\[10pt]
        \partial_t \div f \in
        \L\infty (\Omega; \reali)
      \end{array}
    \right.
    \\[40pt]
    \mathbf{(H3)}\!\!
    &
    \left\{
      \begin{array}{l@{\qquad}l}
        f \in \C1(\Omega; \reali^N)
        &
        F \in \C0(\Omega; \reali)
        \qquad
        \partial_u F \in \L\infty(\Omega;\reali)
        \\[5pt]
        \pt_u f\in
        \L\infty(\Omega; \reali^N)
        &
        \displaystyle
        \int_{\rpis} \!\! \int_{\reali^N} \!
        \norma{(F- \div f)(t, x, \cdot)}_{\L\infty(\reali;\reali)} 
        \, \mathrm{d}x\, \mathrm{d}t < + \infty
      \end{array}
    \right.
  \end{array}
\end{displaymath}

\noindent The quantity $F-\div f$ has a particular role, since it
behaves as the \emph{``true''} source, see~(\ref{eq:Source}). We note
there that the assumptions above can be significantly softened in
various specific situations. For instance, the requirement that $f$ be
Lipschitz, which is however a standard hypothesis,
see~\cite[Paragraph~3]{bouchutperthame}, can be relaxed to $f$ locally
Lipschitz in the case $f=f(u)$ and $F = 0$, thanks to the maximum
principle~\cite[Theorem~3]{kruzkov}. Furthermore, the assumptions
above can be obviously weakened when aiming at estimates on bounded
time intervals.

Assumptions~\textbf{(H1)} are those used in the classical
results~\cite[Theorem~1 and Theorem~5]{kruzkov}. However, we stress
that the proofs below need less regularity. As in~\cite{kruzkov}, we
remark that no derivative of $f$ or $F$ in time is ever
needed. Furthermore, $f$ needs not be twice differentiable in $u$, for
the only second derivatives required are $\nabla_x \partial_u f$ and
$\nabla_x^2 f$.

We recall below the classical result by Kru\v zkov.

\begin{theorem}[Kru\v zkov]
  \label{teo:kruzkov}
  Let~\textbf{(H1)} hold. Then, for any $u_o \in \L\infty (\reali^N;
  \reali)$, there exists a unique weak entropy solution $u$
  to~(\ref{eq:probv}) in $\L\infty \left( \rpic; \Lloc{1}(\reali^N;
    \reali) \right)$ continuous from the right. Moreover, if a
  sequence $u_o^n \in \L\infty (\reali^N; \reali)$ converges to $u_o$
  in $\Lloc1$, then for all $t>0$ the corresponding solutions $u^n(t)$
  converge to $u(t)$ in $\Lloc1$.
\end{theorem}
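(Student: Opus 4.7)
This is the classical Kru\v{z}kov theorem, so my plan is to follow the strategy of~\cite{kruzkov}: prove existence by vanishing viscosity, and then derive uniqueness, right-continuity in time, and $\L1_{\mathrm{loc}}$ continuous dependence on $u_o$ simultaneously from a single doubling-of-variables estimate.

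For existence I would regularize~(\ref{eq:probv}) by the uniformly parabolic problem $\dt u^\epsilon + \Div f(t, x, u^\epsilon) = F(t, x, u^\epsilon) + \epsilon \Delta u^\epsilon$ with smooth data $u_o^\epsilon$ approximating $u_o$. Classical parabolic theory yields smooth solutions $u^\epsilon$; the maximum principle, together with the $\L\infty$ bound on $F - \div f$ from~\textbf{(H1)}, gives a uniform $\L\infty$ bound. Multiplying by $\mathrm{sign}(u^\epsilon - k)\,\varphi$ and absorbing the viscous term via the convexity of $\modulo{\,\cdot - k}$ yields a viscous entropy inequality. Space-time $\L1$--equicontinuity of $\{u^\epsilon\}$, obtained by differencing the equation and exploiting the Lipschitz regularity of $f$ and of $F - \div f$ in~\textbf{(H1)}, gives compactness in $\L1_{\mathrm{loc}}$; passing to the limit as $\epsilon \to 0$ along a subsequence produces $u \in \L\infty(\rpic; \Lloc1(\reali^N; \reali))$ satisfying~(\ref{eq:entropy}) and~(\ref{eq:data}).

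For uniqueness and stability I would invoke Kru\v{z}kov's doubling of variables. Given two entropy solutions $u(t, x)$, $v(s, y)$ with data $u_o$, $v_o$, I would apply~(\ref{eq:entropy}) to $u$ with $k = v(s, y)$ and to $v$ with $k = u(t, x)$, using a test function $\varphi_\rho(t, x, s, y) = \psi\!\bigl(\tfrac{t+s}{2}, \tfrac{x+y}{2}\bigr)\, \theta_\rho(t - s)\, \Theta_\rho(x - y)$ with $\theta_\rho, \Theta_\rho$ standard mollifiers. Summing the two inequalities and sending $\rho \to 0$ yields, in the sense of distributions on $\rpis \times \reali^N$,
\begin{displaymath}
  \dt \modulo{u - v} + \div\!\left[\mathrm{sign}(u - v)\bigl(f(t, x, u) - f(t, x, v)\bigr)\right]
  \leq
  \mathrm{sign}(u - v)\bigl(\Phi(t, x, u) - \Phi(t, x, v)\bigr),
\end{displaymath}
where $\Phi = F - \div f$. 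Integrating on the truncated cone $\{(\tau, y) : 0 \le \tau \le t,\ \modulo{y - x_0} \leq R - L\tau\}$ with $L = \norma{\pt_u f}_{\L\infty(\Omega;\reali^N)}$, and applying Gronwall through the $\L\infty$ bound on $\pt_u \Phi$, delivers the local contraction
\begin{displaymath}
  \norma{u(t) - v(t)}_{\L1(B(x_0, R - Lt); \reali)}
  \leq
  e^{C t}\, \norma{u_o - v_o}_{\L1(B(x_0, R); \reali)}.
\end{displaymath}
Uniqueness follows by taking $u_o = v_o$; continuous dependence on $u_o$ is the same estimate applied to $u_o$ and $u_o^n$; right-continuity at $t \geq 0$ follows from this estimate combined with~(\ref{eq:data}) after a small time shift.

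The main obstacle is the doubling of variables calculation itself, in particular the careful bookkeeping of the $\div f(t, x, k)$ terms in~(\ref{eq:entropy}): their combination, with $k = v$ in the $u$-inequality and $k = u$ in the $v$-inequality, is precisely what produces the effective source $\Phi = F - \div f$ and explains why~\textbf{(H1)} demands $\pt_u(F - \div f) \in \L\infty$ rather than a condition on $\pt_u F$ alone.
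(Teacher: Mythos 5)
The paper does not prove Theorem~\ref{teo:kruzkov}: it is recalled as the classical result of Kru\v{z}kov with a citation to the original reference, and no proof is reproduced. Your sketch---vanishing viscosity for existence, doubling of variables to obtain a Kato inequality, integration on a truncated cone with Gronwall for uniqueness, $\Lloc{1}$-stability in the initial datum, and right-continuity via~(\ref{eq:data}) and a time shift, with the observation that the $\div f(t,x,k)$ terms recombine with $F$ to produce the effective source $F-\div f$---is exactly the argument in the cited reference, so there is nothing to contrast against the paper itself.
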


\begin{remark}
  \label{rem:c}
  {\rm Under the conditions~\textbf{(H2)} and $\int_{\rpis} \!\!
    \int_{\reali^N} \!  \norma{(F- \div f)(t, x,
      \cdot)}_{\L\infty(\reali;\reali)} \, \mathrm{d}x\, \mathrm{d}t <
    + \infty$, see~\textbf{(H3)}, the estimate provided by
    Theorem~\ref{teo:tv} below, allows to use the technique described
    in~\cite[Theorem~4.3.1]{DafermosBook}, proving the continuity in
    time of the solution, so that $u \in \C0 \left( \rpic;
      \Lloc{1}(\reali^N; \reali) \right)$.}
\end{remark}

\subsection{Estimate on the Total Variation}

Recall that~\cite[Theorem~1.3]{KarlsenRisebro}
and~\cite[Theorem~3.2]{chenkarlsen} provide stability bounds
on~(\ref{eq:probv}), in the more general case with a degenerate
parabolic source, but assuming \emph{a priori} bounds on the total
variation of solutions. Our first result provides these bounds.

\begin{theorem}
  \label{teo:tv}
  Assume that~\textbf{(H1)} and~\textbf{(H2)} hold. Let $u_o \in
  \BV(\reali^N; \reali)$. Then, the weak entropy solution $u$
  of~(\ref{eq:probv}) satisfies $u(t) \in \BV(\reali^N; \reali)$ for
  all $t > 0$. Moreover, let
  \begin{equation}
    \label{eq:kappao}
    \kappa_o
    =
    N \, W_N
    \left(
      (2N+1) \, \norma{\nabla \, \partial_u
        f}_{\L\infty(\Omega;\reali^{N\times N})}
      +
      \norma{\partial_u F}_{\L\infty(\Omega;\reali)}
    \right)
  \end{equation}
  with $W_N$ as in~(\ref{eq:W}). Then, for all $T > 0$,
  \begin{equation}
    \label{result}
    \tv \left( u(T) \right)
    \leq
    \displaystyle
    \tv(u_o) \, e^{\kappa_o T}
    \displaystyle
    +
    N W_N \!\! \int_0^T e^{\kappa_o(T-t)} \int_{\reali^N}
    \norma{\nabla( F - \div f)(t, x, \cdot)}_{\L\infty}
    \mathrm{d}x\, \mathrm{d}t \,.
  \end{equation}
\end{theorem}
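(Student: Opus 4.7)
The plan is to control the directional total variation $\tv_e(u(T))$ for each unit vector $e\in S^{N-1}$, and then recover $\tv(u(T))$ via the classical identity $\tv(u) = \frac{1}{2\omega_{N-1}} \int_{S^{N-1}} \tv_e(u)\,\mathrm d\sigma(e)$. Combined with $\modulo{S^{N-1}} = N\omega_N$ and (\ref{eq:W}), this accounts for the $N W_N$ prefactor in~(\ref{result}). A preliminary reduction, based on the $\Lloc1$-continuous dependence in Theorem~\ref{teo:kruzkov} and the lower semicontinuity of $\tv$, would let us assume that $u_o$ and the coefficients are as smooth as needed, then pass to the limit at the end.

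Fix $e\in S^{N-1}$ and $h>0$, and set $v(t,x):=u(t,x+he)$. Then $v$ is the entropy solution of the shifted problem $\dt v + \Div f(t,x+he,v) = F(t,x+he,v)$, with datum $v_o = u_o(\cdot+he)$. I would apply Kru\v{z}kov's doubling-of-variables technique to $u$ and $v$: use~(\ref{eq:entropy}) for $u$ with constant $k = v(s,y)$ and for $v$ with $k = u(t,x)$, mollify on the diagonal, symmetrize the test function, and localize. The decisive step is to split the right-hand side into a nonlinear part, proportional to $|u-v|$ with multipliers $\norma{\nabla\pt_u f}_{\L\infty}$ and $\norma{\pt_u(F-\div f)}_{\L\infty}$, and a translation remainder $R(t,h)$ that gathers, via Taylor expansion in $x$, contributions of size $h\,\norma{\nabla(F-\div f)(t,x,\cdot)}_{\L\infty}$ together with terms of size $h\,\norma{\nabla\pt_u f}_{\L\infty}\modulo{\nabla v(t,x)}$ (the latter integrated against $\modulo{\nabla v}\,\mathrm dx = \mathrm d\tv(v)$).

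Next I would divide by $h$ and take $\liminf_{h\to 0^+}$: the initial-data contribution $\norma{u_o-v_o}_{\L1}/h$ tends to $\tv_e(u_o)$, the nonlinear term becomes a linear multiple of $\tv_e(u(\tau))$, and $R(t,h)/h$ yields either $\int_{\reali^N}\norma{\nabla(F-\div f)(\tau,x,\cdot)}_{\L\infty}\,\mathrm dx$ or a further multiple of $\tv(u(\tau))$. Integrating the resulting directional inequality over $e\in S^{N-1}$ against $\mathrm d\sigma$, dividing by $2\omega_{N-1}$, and using $\modulo{S^{N-1}}/(2\omega_{N-1}) = N W_N$ produces a scalar integral inequality
\begin{equation*}
  \tv(u(T)) \;\leq\; \tv(u_o) + \kappa_o \int_0^T \tv(u(\tau))\,\mathrm d\tau + N W_N \int_0^T\!\!\int_{\reali^N}\! \norma{\nabla(F-\div f)(\tau,x,\cdot)}_{\L\infty}\,\mathrm dx\,\mathrm d\tau,
\end{equation*}
with $\kappa_o$ as in~(\ref{eq:kappao}). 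Gronwall's lemma then delivers~(\ref{result}). The coefficient $(2N+1)$ would emerge from the book-keeping: one $\norma{\nabla\pt_u f}_{\L\infty}$ per component of $\div f$ in the nonlinear part ($N$ copies), $N$ more from the translation remainder rewritten via $\nabla v$, and one extra from the entropy flux $f(t,x,u)-f(t,x,v)$ itself.

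The technical heart, and the main obstacle, lies in executing the doubling argument rigorously under only~\textbf{(H1)}--\textbf{(H2)}: one must cleanly separate the $O(h)$ contributions that survive division by $h$ from the $o(h)$ ones, justify passages to the limit in integrals where only $\L\infty$ bounds on higher derivatives are available, and ensure that the translation remainder is controlled uniformly in $h$. This is precisely what forces the hypotheses on $\nabla\pt_u f$ and on the space integrability of $\norma{\nabla(F-\div f)}_{\L\infty}$ in~\textbf{(H2)}, and what pins down the optimal combinatorial constant $(2N+1)$.
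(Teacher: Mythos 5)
Your plan takes a genuinely different route from the paper.  The paper never splits $\tv$ into directional pieces: it runs the doubling argument with $u(t,x)$ against $u(s,y)$, mollifies $x-y$ with a \emph{radial} kernel $\mu(\cdot)=\gd^{-N}\mu_1(\norma{\cdot}/\gd)$, and the constant $NW_N$ emerges as the ratio $M_1/C_1$ of the two moments~(\ref{eq:M1}) and~(\ref{eq:C1}) via~(\ref{eq:mu2}) and~(\ref{eq:W}).  Your translation-plus-spherical-averaging bookkeeping, using $\tv(u)=\frac{1}{2\omega_{N-1}}\int_{S^{N-1}}\tv_e(u)\,\mathrm d\sigma(e)$ and $\modulo{S^{N-1}}=N\omega_N$, would in principle reproduce the same $NW_N$ and is an attractive idea: it avoids ever manipulating $\mu_1'$ and the awkward identity~(\ref{eq:mu4}).

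However, there is a genuine gap exactly where you gesture at ``the technical heart.''  Once you carry out the doubling argument for $u$ and the shifted solution $v(t,x)=u(t,x+he)$, the cross term coming from $f(t,x,u)-f(t,x,v)-f(t,y,u)+f(t,y,v)$, after you divide by $h$ and let $h\to0$, is bounded by a constant times $\int_0^T\tv\left(u(t)\right)\,\mathrm dt$ (and likewise for the piece you call the ``translation remainder'' involving $\modulo{\nabla v}$).  You then feed this into Gronwall.  But Gronwall is vacuous unless you already know that $t\mapsto\tv\left(u(t)\right)$ is finite and locally integrable on $[0,T]$ --- which is precisely the statement to be proved.  Your ``preliminary reduction'' does not break this circularity: regularizing $u_o$ and the coefficients only guarantees $u(t)$ is smooth up to the first shock time, not on the whole interval $[0,T]$, so you cannot assume $u(t)\in\BV$ a priori.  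The paper invests the entire middle of Section~\ref{sec:prooftv} in resolving exactly this issue: it keeps the spatial mollification parameter $\gd$ alive, introduces $\mathcal F(T,\gd)$, integrates the inequality~(\ref{inegalite}) in $T$ to turn it into a differential inequality $\partial_\gd(\gd^\alpha\mathcal F)\ge\ldots$ in the \emph{mollification} variable, integrates that from $\gd$ to $\infty$ to obtain~(\ref{eq:FDelta}), and only thereby concludes that the mollified total variation is bounded uniformly in $\gd$ for $T<T_1$, so that Proposition~\ref{prop:normtv} yields $u(t)\in\BV$.  Only after that qualitative step is Gronwall legitimate, and even then only on $[0,T_1]$, with an iteration in time using that $T_1$ in~(\ref{eq:T1}) is independent of the datum.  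Your proposal contains no substitute for this step, and without one the argument does not close.  A secondary, smaller concern: the combinatorial count you give for $(2N+1)$ is a plausible guess, but in the paper's computation the split is $(N+1)+N$ (the $N+1$ coming from the moment identity~(\ref{eq:mu4}) applied to the $\mu_1'$ term, the $N$ from $\pt_u\div f$), so your accounting would need to be checked carefully once the translation remainder is written out precisely.
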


\noindent This estimate is optimal in the following situations:
\begin{enumerate}
\item If $f$ is independent from $x$ and $F = 0$, then $\kappa_o =0$
  and the integrand in the right hand side above vanishes. Hence,
  (\ref{result}) reduces to the well known optimal bound $\tv \left(
    u(t) \right) \leq \, \tv (u_o)$.
\item In the 1D case, if $f$ and $F$ are both independent from $t$ and
  $u$, then $\kappa_o =0$ and~(\ref{eq:probv}) reduces to the ordinary
  differential equation $\partial_t u = F - \div f$. In this
  case,~(\ref{result}) becomes
  \begin{equation}
    \label{eq:Source}
    \tv \left( u(t) \right) \leq  \tv (u_o) + t \, \tv(F - \div f) \,.
  \end{equation}
\item If $f = 0$ and $F = F(t)$ then, trivially, $\tv \left( u(t)
  \right) = \tv (u_o)$ and~(\ref{result}) is optimal.
\end{enumerate}
\noindent A simpler but slightly weaker form of~(\ref{result}) is
\begin{displaymath}
  \tv \left( u(T) \right)
  \leq
  \displaystyle
  \tv(u_o) \, e^{\kappa_o T}
  \displaystyle
  +
  N W_N  \frac{e^{\kappa_o T}-1}{\kappa_o} \sup_{t \in [0,T]}\int_{\reali^N}
  \norma{\nabla( F - \div f)(t, x, \cdot)}_{\L\infty}
  \mathrm{d}x
\end{displaymath}
when the right hand side is bounded.

\subsection{Stability of Solutions with Respect to Flow and Source}

Consider now~(\ref{eq:probv}) together with the analogous problem
\begin{equation}
  \label{eq:probu}
  \left\{\begin{array}{l@{\qquad}rcl}
      \partial_t v + \Div \, g(t,x,v) = G(t,x,v) 
      & (t,x) & \in & \rpis \times \reali^N
      \\
      v(0,x)=v_o(x) 
      & x & \in & \reali^N \,.
    \end{array}
  \right.
\end{equation}
We aim at estimates for the difference $u - v$ between the solutions
in terms of $f - g$, $F - G$ and $u_o - v_o$. Estimates of this type
were derived by Bouchut \& Perthame in~\cite{bouchutperthame} when
$f$, $g$ depend only on $u$ and $F = G = 0$. Here, we generalize their
result adding the $(t,x)$-dependence. The present technique is
essentially based on Theorem~\ref{teo:tv}.

\begin{theorem}
  \label{teo:estimates}
  Let $(f, F)$, $(g,G)$ verify~\textbf{(H1)}, $(f,F)$ verify
  \textbf{(H2)} and $(f-g,F-G)$ verify~\textbf{(H3)}. Let $u_o, v_o
  \in \BV(\reali^N; \reali)$. We denote $\kappa_o$ as
  in~(\ref{eq:kappao}) and introduce
  \begin{displaymath}
    \kappa
    =
    2N \norma{\nabla\pt_u f }_{\L{\infty}(\Omega ; \reali^{N\times N})}
    +
    \norma{\pt_u F}_{\L{\infty}(\Omega ; \reali)}
    +
    \norma{\pt_u (F-G)}_{\L\infty(\Omega;\reali)} 
    \mbox{ and } 
    M
    =
    \norma{\partial_u g}_{\L\infty(\Omega;\reali^N)}.
  \end{displaymath}
  Then, for any $T,R >0$ and $x_o \in \reali^N$, the following
  estimate holds:
  \begin{eqnarray*}
    & &
    \int_{\norma{x-x_o}\le R} \modulo{u(T,x)-v(T,x)}\mathrm{d}x
    \; \leq \;
    e^{\kappa T}
    \int_{\norma{x-x_o}\leq R + M T} \modulo{u_o(x) - v_o(x)} 
    \, \mathrm{d}x
    \\
    & + &
    \frac{e^{\kappa_o T}-e^{\kappa T}}{\kappa_o-\kappa}  \, \tv(u_o) \, 
    \norma{\pt_u(f-g)}_{\L{\infty}}  
    \\
    & + &
    NW_N 
    \left(
      \int_0^T \frac{e^{\kappa_o (T-t)}-e^{\kappa (T-t)}}{\kappa_o-\kappa} 
      \int_{\reali^N} \norma{\nabla(F-\div f)(t, x,\cdot)}_{\L\infty}
      \mathrm{d}x \, \mathrm{d}t
    \right)
    \norma{\pt_u(f-g)}_{\L{\infty}}
    \\
    & + &
    \int_0^T e^{\kappa (T-t)} \int_{\norma{x-x_o}\leq R+M(T-t)}
    \norma{\left((F-G) - \div(f-g) \right)(t,x,\cdot)}_{\L{\infty}} 
    \, \mathrm{d}x\, \mathrm{d}t \,.
  \end{eqnarray*}
\end{theorem}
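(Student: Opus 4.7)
The plan is to carry out a Kru\v{z}kov doubling-of-variables argument for the pair $(u,v)$, combined with the total variation bound provided by Theorem~\ref{teo:tv}. Concretely, I would write the entropy inequality for $u$ (the $(f,F)$-solution) with entropy constant $k=v(s,y)$, and the entropy inequality for $v$ (the $(g,G)$-solution) with $k=u(t,x)$, then use the asymmetric test function $\varphi(t,x,s,y) = \psi(s,y)\,\rho_\epsilon(t-s)\,\omega_\epsilon(x-y)$, with $\rho_\epsilon,\omega_\epsilon$ standard mollifiers and $\psi$ a smooth approximation of the indicator of the backward cone $\{(t,x)\colon |x-x_o|\le R+M(T-t),\ 0\le t\le T\}$. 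The asymmetric choice $\psi=\psi(s,y)$ is crucial: after summing the two entropy inequalities, it places the factor $\nabla\psi$ against $\mathrm{sign}(u-v)\bigl(g(t,x,u)-g(t,x,v)\bigr)$, which is controlled by $M|u-v|\,|\nabla\psi|$ with $M=\|\partial_u g\|_\infty$, exactly what the cone geometry requires.

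After passing to the limit $\epsilon\to 0$, the leftover $\nabla_x\omega_\epsilon$ commutator produces, via integration by parts and the $\mathbf{BV}$ regularity of $u$, a bulk contribution that recombines with the source terms into
$$
\mathrm{sign}(u-v)\Bigl\{F(t,x,u)-G(t,x,v)-\Div(f-g)(t,x,u)\Bigr\}\,\psi,
$$
where the total divergence $\Div(f-g)(t,x,u(t,x))=\div(f-g)(t,x,u)+\partial_u(f-g)(t,x,u)\cdot\nabla u$ encodes both the ``true source'' difference and the flux derivative acting on $u$. I then split this bulk residual as
$[(F-G)-\div(f-g)](t,x,u)\;+\;[G(t,x,u)-G(t,x,v)]\;-\;\partial_u(f-g)(t,x,u)\cdot\nabla u$,
and estimate the three pieces by $\|(F-G)-\div(f-g)(t,x,\cdot)\|_\infty$, by $\bigl(\|\partial_u F\|_\infty+\|\partial_u(F-G)\|_\infty\bigr)|u-v|$, and by $\|\partial_u(f-g)\|_\infty\,|\nabla u|$, respectively; integrating the third in $x$ is what causes $\tv\bigl(u(t)\bigr)$ to appear. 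The remaining $2N\,\|\nabla\partial_u f\|_\infty$ in $\kappa$ comes from the multidimensional Kru\v{z}kov commutator $|\div f(t,x,u)-\div f(t,x,v)|\le N\|\nabla\partial_u f\|_\infty|u-v|$ together with an analogous contribution from the mollifier-regularised sign function, which in $\reali^N$ picks up the same factor via the sphere integration that produced $NW_N$ in Theorem~\ref{teo:tv}.

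Once the cone-cutoff $\psi$ absorbs the main flux term (using $\partial_t\psi+M|\nabla\psi|\le0$ and passing $\psi$ to the indicator of the cone), one is left with the differential inequality
$$
\frac{d}{dt}\!\!\int_{|x-x_o|\le R+M(T-t)}\!\!|u-v|\,dx
\;\le\; \kappa\!\!\int_{|x-x_o|\le R+M(T-t)}\!\!|u-v|\,dx
\;+\;\|\partial_u(f-g)\|_\infty\,\tv\bigl(u(t)\bigr)
\;+\;\!\!\int_{|x-x_o|\le R+M(T-t)}\!\!\|(F-G)-\div(f-g)(t,x,\cdot)\|_\infty\,dx.
$$
A Gronwall argument then gives an $e^{\kappa T}$-prefactored initial-data term, plus time-integrated versions of the last two terms. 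Substituting the explicit bound on $\tv\bigl(u(t)\bigr)$ from Theorem~\ref{teo:tv} and using the elementary identity $\int_0^T e^{\kappa(T-t)}e^{\kappa_o t}\,dt=(e^{\kappa_o T}-e^{\kappa T})/(\kappa_o-\kappa)$ produces precisely the two $(\kappa_o-\kappa)^{-1}$ factors and the double-integral structure appearing in the statement. The main technical obstacle is the commutator analysis sketched above: for non-smooth entropy solutions one cannot integrate by parts literally, and one must control the $\omega_\epsilon$-regularised flux and sign jointly using the $\mathbf{BV}$ bound on $u$, which is where both the $\tv(u(t))\,\|\partial_u(f-g)\|_\infty$ term and the precise constant $2N\,\|\nabla\partial_u f\|_\infty$ are produced.
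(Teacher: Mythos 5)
Your plan is essentially the paper's proof: doubling of variables, a cutoff $\psi$ aligned with the $(g,G)$-solution so that the cone propagation speed is $M = \norma{\partial_u g}_{\L\infty}$, the BV bound of Theorem~\ref{teo:tv} feeding a Gronwall step, and the elementary identity $\int_0^T e^{\kappa(T-t)} e^{\kappa_o t}\,\mathrm{d}t = (e^{\kappa_o T}-e^{\kappa T})/(\kappa_o-\kappa)$ at the end. Two points are off, though. First, the bookkeeping for $\kappa = 2N\norma{\nabla\partial_u f}_{\L\infty}+\cdots$: the two factors of $N$ do not both come from ``the sphere integration that produced $N W_N$.'' That sphere computation (equation~(\ref{eq:mu2})) gives the $W_N$ in $\kappa_o$. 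In the stability estimate, one $N$ comes from the $x$-commutator $J_x$ via $\int\norma{x}\,\norma{\nabla\mu}\,\mathrm{d}x=N$ (equation~(\ref{eq:mu3})), and the other from bounding $\partial_u\div f$ by $N\norma{\nabla\partial_u f}_{\L\infty}$ in the source term $L_x$; neither involves $W_N$. Second, and more importantly, the line ``$\Div(f-g)(t,x,u)=\div(f-g)(t,x,u)+\partial_u(f-g)(t,x,u)\cdot\nabla u$, integrating the third piece in $x$ is what causes $\tv(u(t))$ to appear'' cannot be taken literally: $\nabla u$ is only a finite measure, and the contribution that eventually yields $\norma{\partial_u(f-g)}_{\L\infty}\int_0^T\tv(u(t))\,\mathrm{d}t$ arises from the term where $\nabla_y\Psi$ hits $\mathrm{sign}(u-v)\left[(g-f)(t,x,u)-(g-f)(t,x,v)\right]$ (the term $K$ in the paper). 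Passing that derivative onto $u$ requires a genuine commutator argument with a double regularization --- an $\alpha$-mollified sign $s_\alpha=\mathrm{sign}\ast\rho_\alpha$ and a $\beta$-mollified $u_\beta=\sigma_\beta\ast_y u$ --- in order to integrate by parts, show the ``bad'' commutator vanishes as $\alpha\to 0$, and recover $\norma{\partial_u(f-g)}_{\L\infty}\int\norma{\nabla u_\beta}$ with the correct order of limits. You flag this as ``the main technical obstacle,'' which is honest, but the sketch does not actually address how to make the integration by parts licit for a BV entropy solution. This is where the bulk of the work in the paper's Section~5 lies, and a complete proof would have to reproduce it.
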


\noindent The above inequality is undefined for $\kappa = \kappa_o$
and, in this case, it reduces to~(\ref{eqKKo}).  This bound is optimal
in the following situations, where $u_o,v_o \in \L1(\reali^N;\reali)$.
\begin{enumerate}
\item In the standard case of a conservation law, i.e.~when $F = G =0$
  and $f,g$ are independent of $x$, we have $\kappa_o = \kappa =0$ and
  the result of Theorem~\ref{teo:estimates} becomes,
  see~\cite[Theorem~2.1]{BianchiniColombo},
  \begin{displaymath}
    \norma{u(T) - v(T)}_{\L1(\reali^N;\reali)}
    \leq
    \norma{u_o - v_o}_{\L1(\reali^n;\reali)}
    +
    T \; \tv (u_o) \, \norma{\partial_u (f - g)}_{\L\infty(\Omega;\reali^N)} \,.
  \end{displaymath}
\item If $\partial_u f = \partial_u g =0$ and $\partial_u F
  = \partial_u G = 0$, then $\kappa_o = \kappa = 0$ and
  Theorem~\ref{teo:estimates} now reads
  \begin{displaymath}
    \!\!\!\!\!\!\!\!\!\!\!\!
    \norma{u(T) -  v(T)}_{\L1(\reali^N;\reali)} 
    \leq 
    \norma{u_o - v_o}_{\L1(\reali^N;\reali)} 
    + \!
    \int_0^T \!\!
    \norma{\left[(F - G) - \div (f - g)\right](t)}_{\L1(\reali^N;\reali)}
    \mathrm{d}t.
  \end{displaymath}
\item If $(f,F)$ and $(g,G)$ are dependent only on $x$, then
  Theorem~\ref{teo:estimates} reduces to
  \begin{displaymath}
    \norma{u(T) - v(T)}_{\L1(\reali^N;\reali)} 
    \leq 
    \norma{u_o - v_o}_{\L1(\reali^N;\reali)} 
    + 
    T \, \norma{(F-G) -\div(f-g)}_{\L1(\reali^N;\reali)} \,.
  \end{displaymath}
\end{enumerate}

\noindent The estimate obtained in Theorem~\ref{teo:estimates} shows
also that, depending on the properties of specific applications, the
regularity requirement $f \in \C2(\Omega; \reali^N)$ can be
significantly relaxed. For instance, in the case $f(t,x,u) = q(u) \,
v(x)$ considered in~\cite{chenkarlsen, KarlsenRisebro}, asking $q$ of
class $\C1$ and $v$ of class $\C2$ is sufficient. See also
Section~\ref{sec:application} for a case in which the required
regularity in time can be reduced.

In the case of conservations laws, i.e.~when $F = G = 0$, one proves
that $\kappa < \kappa_o$ and the estimate in
Theorem~\ref{teo:estimates} takes the somewhat simpler form
\begin{eqnarray*}
  & &
  \int_{\norma{x-x_o}\le R} \modulo{u(T,x)-v(T,x)}\mathrm{d}x
  \; \leq \;
  e^{\kappa T}
  \int_{\norma{x-x_o}\leq R + M T} \modulo{u_o(x) - v_o(x)} 
  \, \mathrm{d}x
  \\[5pt]
  & + &
  T \, e^{\kappa_oT} \, \tv(u_o) \, 
  \norma{\pt_u(f-g)}_{\L{\infty}}  
  \\
  & + &
  NW_N \, T^2 \, e^{\kappa_oT}
  \sup_{t \in [0,T]} 
  \left(
    \int_{\reali^N} \norma{\nabla \div f(t, x,\cdot)}_{\L\infty}
    \mathrm{d}x
  \right)
  \; \norma{\pt_u(f-g)}_{\L{\infty}}
  \\
  & + &
  T e^{\kappa_oT}
  \sup_{t \in [0,T]}
  \int_{\norma{x-x_o} \leq R+M(T-t)}
  \norma{\div(f-g) (t,x,\cdot)}_{\L{\infty}} 
  \, \mathrm{d}x
\end{eqnarray*}
when the right hand side is bounded.  In the case considered
in~\cite[Theorem~3.1]{bouchutperthame}, $f=f(u)$, $\kappa_o=0$ and we
obtain~\cite[formula~(3.2)]{bouchutperthame} with $1$ instead of the
constant $C$ therein.

\Section{Application to a Radiating Gas Model}
\label{sec:application}

The following balance law is a toy model inspired by Euler equations
for radiating gases:
\begin{equation}
  \label{eq:BL}
  \partial_t u + \Div f(t,x,u) = -u + K \ast_x u \,.
\end{equation}
It has been extensively studied in the literature when $f=f(u)$, see
for instance~\cite{KN2, KN1, LattanzioMarcati, LiuTadmor, Serre2003}
for the scalar $1$D case, \cite{ColomboGuerra, LinCoulombel} for 1D
systems, \cite{DiFrancesco} for the scalar $N$D case.

The estimate provided by Theorem~\ref{teo:estimates} allows us to
present an alternative proof of the well posedness of~(\ref{eq:BL})
proved in~\cite{DiFrancesco}. Furthermore, we add stability estimates
on the dependence of the solution from $f$ and $K$, in the case of $f$
dependent also on $t,x$ and with more general source terms.

\begin{theorem}
  \label{thm:appl}
  Let $(f,F)$ satisfy~\textbf{(H1)}, \textbf{(H2)}
  and~\textbf{(H3)}. Assume that
  \begin{displaymath}
    \mbox{{\rm\textbf{(K)}}} \qquad
    K
    \in (\C2 \cap \L\infty)(\rpis \times \reali^N; \reali)
    \quad \mbox{ and } \quad
    K
    \in 
    \L\infty \left(\rpis; \W{2}{1}(\reali^N;\reali) \right).
  \end{displaymath}
  Then, for any $u_o \in (\BV\cap\L1)(\reali^N;\reali)$, the Cauchy
  problem
  \begin{equation}
    \label{eq:toy}
    \left\{\begin{array}{l@{\qquad}rcl}
        \partial_t u + \Div  f(t,x,u) = F(t,x,u) + K \ast_x u
        & (t,x) & \in & \rpis\times\reali^N
        \\
        u(0,x)=u_o(x) & x & \in & \reali^N
        \\
      \end{array}
    \right.
  \end{equation}
  admits a unique weak entropy solution $u \in \C0 \left( \rpic; \L1
    (\reali^N; \reali) \right)$. Moreover, denoting $k =
  \norma{K}_{\L\infty(\rpis;\L1(\reali^N;\reali))}$, for all $T > 0$,
  the following estimate holds:
  \begin{eqnarray*}
    \tv \left( u(T) \right)
    & \leq &
    e^{(\kappa_o+N W_N k) T} \; \tv(u_o)
    \\
    & &
    + 
    NW_N \int_0^T e^{(\kappa_o+ N W_N k)(T-t)}
    \int_{\reali^N} 
    \norma{\nabla\left(F-\div f \right)(t,x,\cdot)}_{\L\infty}
    \, \mathrm{d}x\, \mathrm{d}t.
  \end{eqnarray*}
  If $F(t,x,0) - \div f(t,x,0) = 0$ for all $t \in [0,T]$ and $x \in
  \reali^N$, then
  \begin{enumerate}
  \item $ \displaystyle \norma{u(T)}_{\L1(\reali^N;\reali)} \leq
    e^{(\kappa +k)T} \norma{u_o}_{\L1(\reali^N;\reali)}$.
  \item Let $\tilde K$ satisfy~\textbf{(K)} and call $\tilde u$ the
    solution to~(\ref{eq:toy}) with $K$ replaced by $\tilde K$. Then,
    \begin{equation}
      \label{eq:end}
      \norma{u(T)-\tilde u(T)}_{\L1(\reali^N;\reali)}
      \leq
      \norma{u_o}_{\L1(\reali^N;\reali)} \;
      \frac{e^{kT}-e^{\tilde k T}}{k - \tilde k} 
      \; \norma{K-\tilde K}_{\L\infty(\rpis;\L1(\reali^N;\reali))} \,.
    \end{equation}
  \end{enumerate}

\end{theorem}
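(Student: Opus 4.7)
The plan is to construct the solution of~(\ref{eq:toy}) by a Banach fixed-point argument in which the non-local term $K\ast_x u$ is treated as an external forcing, and then to derive the remaining estimates by plugging the fixed-point equation into Theorems~\ref{teo:tv} and~\ref{teo:estimates}. Given $w\in\C0([0,T];\L1(\reali^N;\reali))$, set $h_w(t,x)=(K\ast_x w)(t,x)$; by~\textbf{(K)}, $h_w$ is bounded and of class $\C1$ in $(t,x)$, so that the augmented pair $(f,\tilde F_w)$ with $\tilde F_w(t,x,u)=F(t,x,u)+h_w(t,x)$ verifies~\textbf{(H1)}--\textbf{(H3)}, and Theorem~\ref{teo:kruzkov} defines $\mathcal T(w)$ as the unique entropy solution with datum $u_o$. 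Comparing $\mathcal T(w_1)$ and $\mathcal T(w_2)$ via Theorem~\ref{teo:estimates} (same flow, same datum, sources differing by $h_{w_1}-h_{w_2}$) and using Young's inequality $\norma{K\ast_x(w_1-w_2)(t)}_{\L1}\le k\,\norma{(w_1-w_2)(t)}_{\L1}$ yields
\begin{displaymath}
\norma{\mathcal T(w_1)(t)-\mathcal T(w_2)(t)}_{\L1}
\le
k\int_0^t e^{\kappa(t-s)}\,\norma{(w_1-w_2)(s)}_{\L1}\,\mathrm{d}s.
\end{displaymath}
Endowing $\C0([0,T];\L1)$ with the weighted norm $\norma{w}_\lambda=\sup_{t\in[0,T]}e^{-\lambda t}\norma{w(t)}_{\L1}$ for $\lambda$ sufficiently large renders $\mathcal T$ a strict contraction; its unique fixed point $u$ is the desired weak entropy solution, with uniqueness inherited from Theorem~\ref{teo:kruzkov} and continuity in time from Remark~\ref{rem:c}.

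For the total variation estimate I would regard $u$ as the Kru\v zkov solution of the local balance law with source $F(t,x,u)+h(t,x)$, where $h=K\ast_x u$. Since $h$ does not depend on $u$, the constant $\kappa_o$ in~(\ref{eq:kappao}) is unchanged. The extra contribution to $\nabla(F-\div f)$ is $\nabla_x h=K\ast_x\nabla u$, whose $\L1$-norm is bounded by $\norma{K(t)}_{\L1}\tv(u(t))\le k\,\tv(u(t))$. Inserting this into~(\ref{result}) produces
\begin{displaymath}
\tv(u(T))\le\tv(u_o)\,e^{\kappa_o T}+NW_N\!\int_0^T\! e^{\kappa_o(T-t)}B(t)\,\mathrm{d}t+NW_N k\!\int_0^T\! e^{\kappa_o(T-t)}\tv(u(t))\,\mathrm{d}t,
\end{displaymath}
with $B(t)=\int_{\reali^N}\norma{\nabla(F-\div f)(t,x,\cdot)}_{\L\infty}\mathrm{d}x$. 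The substitution $\phi(t)=e^{-\kappa_o t}\tv(u(t))$ reduces this to a standard Gronwall inequality; resolving it and integrating the forcing term by parts combines the two exponentials into $e^{(\kappa_o+NW_Nk)(T-t)}$, which is exactly the announced bound.

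For the $\L1$ estimate, the hypothesis $F(t,x,0)-\div f(t,x,0)=0$ makes $v\equiv0$ an entropy solution of~(\ref{eq:toy}) with zero datum. Comparing $u$ and $v$ via Theorem~\ref{teo:estimates}, viewed as solutions of local problems with common flow $f$ and sources $F+K\ast_x u$ and $F$: the source mismatch $K\ast_x u$ is $u$-independent, so $\pt_u(F-G)=0$, and letting $R\to\infty$ leaves only the datum term and the fourth integral, giving $\norma{u(T)}_{\L1}\le e^{\kappa T}\norma{u_o}_{\L1}+k\int_0^T e^{\kappa(T-t)}\norma{u(t)}_{\L1}\mathrm{d}t$, whence item~1 follows by Gronwall. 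For item~2, I would apply Theorem~\ref{teo:estimates} to $u$ and $\tilde u$ with common flow and datum and sources $F+K\ast_x u$ and $F+\tilde K\ast_x\tilde u$: the mismatch decomposes as $(K-\tilde K)\ast_x u+\tilde K\ast_x(u-\tilde u)$, with $\L1$-norm bounded by $\norma{K-\tilde K}_{\L\infty(\rpis;\L1)}\norma{u(t)}_{\L1}+\tilde k\,\norma{(u-\tilde u)(t)}_{\L1}$. Inserting the bound of item~1 and applying the same Gronwall-with-integration-by-parts trick as in the TV step produces~(\ref{eq:end}).

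The main obstacle is the very first step: verifying in detail that the augmented sources $\tilde F_w$ satisfy the regularity required by Theorems~\ref{teo:kruzkov} and~\ref{teo:estimates}, in particular the joint $\C1$ regularity of $h_w$ in $(t,x)$, which rests on continuity of $w$ in time and on~\textbf{(K)}, and ensuring that the fixed-point iteration preserves uniformly the BV control coming from the previous paragraph. The Gronwall manipulations in the remaining parts are routine but have to be arranged carefully so that the exponents $\kappa_o+NW_Nk$, $k$ and $\tilde k$ appear in the precise combinations claimed in the statement.
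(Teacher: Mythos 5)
Your overall strategy coincides with the paper's: a Banach fixed point for the operator that freezes the non-local term, followed by bootstrapping the resulting local-problem estimates through Theorems~\ref{teo:tv} and~\ref{teo:estimates} and Gronwall's lemma. The Gronwall manipulations in the TV, $\L1$ and stability steps are arranged exactly as in the paper, and the minor variation of using a weighted norm $\norma{w}_\lambda=\sup_t e^{-\lambda t}\norma{w(t)}_{\L1}$ to obtain a contraction directly on $[0,T]$, rather than shrinking $T$ and iterating, is a legitimate and equivalent alternative.

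There is, however, one real gap that you underestimate. You assert that for $w\in\C0([0,T];\L1)$ the function $h_w=K\ast_x w$ is ``of class $\C1$ in $(t,x)$'' so that $(f,F+h_w)$ satisfies \textbf{(H1)}--\textbf{(H3)}, and you describe the remaining work as merely ``verifying in detail.'' This is not a matter of detail: the claim is false as stated. Convolving in $x$ with a $\C2$ kernel makes $h_w$ smooth in $x$, but since $w$ is only continuous in time, $h_w$ is only $\C0$ in $t$; in particular $\partial_t h_w$ need not exist, whereas both \textbf{(H1)} (which requires $F\in\C1(\Omega;\reali)$) and \textbf{(H2)} (which requires $\partial_t F\in\L\infty$) impose genuine time-differentiability of the source. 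Consequently one cannot invoke Theorems~\ref{teo:kruzkov}, \ref{teo:tv} and~\ref{teo:estimates} directly for the frozen problem. The paper fills this hole with Lemma~\ref{lem:weak}: one approximates $w$ by smooth $w_n$, applies the theorems to the regularized problems, shows via the stability estimate of Theorem~\ref{teo:estimates} that the $u_n$ form a Cauchy sequence, and then passes to the limit in the entropy inequalities by dominated convergence, thereby extending both the existence statement and the quantitative bounds to sources of the form $F+K\ast_x w$ with $w$ merely in $\L\infty$. Your fixed-point and subsequent estimates go through once this approximation lemma is supplied, but as written the first step of your argument rests on a regularity assertion that does not hold.
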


\begin{proof} 
  Fix a positive $T$ (to be specified below) and consider the Banach
  space $X = \C0 \left( [0,T]; \L1(\reali^N;\reali) \right)$ equipped
  with the usual norm $\norma{u}_X =
  \norma{u}_{\L\infty(\rpis;\L1(\reali^N;\reali))}$. Define on $X$ the
  map ${\mathcal T}$ so that ${\mathcal T}(w) =u$ if and only if $u$
  solves
  \begin{equation}
    \label{eq:weak}
    \left\{\begin{array}{l@{\qquad}rcl}
        \partial_t u + \Div  f(t,x,u) = F(t,x,u) + K \ast_x w
        & (t,x) & \in & \rpis\times\reali^N
        \\
        u(0,x)=u_o(x) & x & \in & \reali^N
        \\
      \end{array}
    \right.
  \end{equation}
  in the sense of Definition~\ref{def:sol}. Note that the source term
  does not have the regularity required in~\textbf{(H1)}. However, by
  the estimate in Theorem~\ref{teo:estimates}, we can prove
  that~(\ref{eq:weak}) does indeed have a unique weak entropy
  solution, see Lemma~\ref{lem:weak} for the details.  The fixed
  points of $\mathcal T$ are the solutions to~(\ref{eq:BL}). By
  Theorem~\ref{teo:kruzkov} and Remark~\ref{rem:c}, ${\mathcal T}w \in
  X$ for all $w \in X$. We now show that ${\mathcal T}$ is a
  contraction, provided $T$ is sufficiently small.  Note that
  \begin{eqnarray*}
    \kappa_o
    & = &
    N \, W_N
    \left(
      (2N+1) \, \norma{\nabla \, \partial_u f}_{\L\infty}
      +
      \norma{\partial_u F}_{\L\infty}
    \right)
    \\
    \kappa 
    & = &
    2 N \norma{\nabla \partial_u f}_{\L\infty}
    +
    \norma{\partial_u F}_{\L\infty} \,.
  \end{eqnarray*}
  Moreover, by Theorem~\ref{teo:estimates}
  \begin{eqnarray*}
    d ({\mathcal T}w_1, {\mathcal T}w_2)
    & = &
    \sup_{t \in [0,T]} \norma{{\mathcal T}w_1 - {\mathcal T}w_2}_{\L1}
    \\
    & \leq &
    \sup_{t \in [0,T]} 
    \left(
      \frac{e^{\kappa t} -1}{\kappa}
      \sup_{\tau \in [0,t]} 
      \norma{K(\tau) \ast_x (w_1 - w_2) (\tau)}_{\L1}
    \right)
    \\
    & \leq &
    \frac{e^{\kappa T} -1}{\kappa}
    \sup_{\tau \in [0,T]} 
    \norma{K(\tau)}_{\L1} \, \norma{(w_1 - w_2) (\tau)}_{\L1}
    \\
    & \leq &
    \frac{e^{\kappa T} -1}{\kappa} \, k \, d(w_1, w_2) \,.
  \end{eqnarray*}
  Therefore, ${\mathcal T}$ is a contraction as soon as $T$ is smaller
  than a threshold that depends only on $\norma{\partial_u
    F}_{\L\infty(\Omega:\reali)}, \norma{\nabla \partial_u
    f}_{\L\infty(\Omega:\reali^{N\times N})}$ and on
  $\norma{K}_{\L\infty(\rpis;\L1(\reali^N;\reali))}$. Therefore, we
  proved the well posedness of~(\ref{eq:toy}) globally in time.

  Consider the bound on $\tv \left(u(t)\right)$. By
  Theorem~\ref{teo:tv},
  \begin{eqnarray*}
    \tv \left( u(T) \right)
    & \leq &
    \tv (u_o)
    +
    N W_N
    \int_0^T e^{\kappa_o(T-t)}
    \int_{\reali^N}
    \norma{\nabla(F-\div f)(t,x,\cdot)}_{\L\infty(\reali;\reali^N)}
    \, \mathrm{d}x\, \mathrm{d}t
    \\
    & &
    + 
    N W_N
    \int_0^T e^{\kappa_o(T-t)}
    k \, \tv \left( u(t) \right) \, \mathrm{d}t
  \end{eqnarray*}
  and an application of Gronwall Lemma gives the desired bound.

  We estimate the $\L1$ norm of the solution to~(\ref{eq:toy}),
  comparing it with the solution to
  \begin{equation}
    \label{eq:toy0}
    \left\{\begin{array}{l@{\qquad}rcl}
        \partial_t u + \Div  f(t,x,u) = F(t,x,u) + K \ast_x u
        & (t,x) & \in & \rpis\times\reali^N
        \\
        u(0,x)=0 & x & \in & \reali^N.
        \\
      \end{array}
    \right.
  \end{equation}
  By assumption, $0$ solves~(\ref{eq:toy0}), hence it is its unique
  solution. Then, evaluating the distance between the solutions
  of~(\ref{eq:toy}) and~(\ref{eq:toy0}) by means of
  Theorem~\ref{teo:estimates}, we get
  \begin{eqnarray*}
    e^{-\kappa T}\norma{u(T)}_{\L1(\reali^N;\reali)}
    & \leq &
    \norma{u_o}_{\L1(\reali^N;\reali)}
    +
    \int_0^T e^{-\kappa t} \int_{\reali^N} \modulo{K\ast_x u (t,x)}
    \, \mathrm{d}x\, \mathrm{d}t
  \end{eqnarray*} 
  and, thanks to Gronwall Lemma, we obtain:
  \begin{displaymath}
    \norma{u(T)}_{\L1(\reali^N;\reali)}
    \leq 
    e^{\left(\kappa+k\right) T}\norma{u_o}_{\L1(\reali^N;\reali)} \,.
  \end{displaymath}
  The final estimate~(\ref{eq:end}) follows from
  Theorem~\ref{teo:estimates}:
  \begin{eqnarray*}
    \!\!\!\!\!\!
    & &
    e^{-\kappa T} \norma{(u-\tilde u)(T)}_{\L1(\reali^N;\reali)}
    \\
    \!\!\!\!\!\!
    & \leq & 
    \norma{K - \tilde K}_{\L\infty(\rpis;\L1(\reali^N;\reali))}
    \int_0^T e^{-\kappa t} \norma{u(t)}_{\L1(\reali^N;\reali)}
    \, \mathrm{d}t 
    + 
    k \!\! \int_0^T e^{-\kappa t} \norma{(u-\tilde u)(t)}_{\L1(\reali^N;\reali)} 
    \,\mathrm{d}t
    \\
    \!\!\!\!\!\!
    & \leq &
    \norma{K - \tilde K}_{\L\infty(\rpis;\L1(\reali^N;\reali))}
    \, \norma{u_o}_{\L1(\reali^N;\reali)}  \frac{e^{kT}-1}{k} 
    + 
    \tilde k \int_0^T e^{-\kappa t} \norma{(u-\tilde u)(t)}_{\L1(\reali^N;\reali)}
    \,\mathrm{d}t
  \end{eqnarray*}
  and thanks to Gronwall Lemma, we get the result.

  The continuity in time is proved as described in Remark~\ref{rem:c}.
\end{proof}

\begin{lemma}
  \label{lem:weak}
  Let $f,F$ satisfy~\textbf{(H1)} and $K$ satisfy~\textbf{(K)}. If $w
  \in \L\infty(\rpic \times \reali^N;\reali)$, then the estimates in
  Theorem~\ref{teo:tv} and in Theorem~\ref{teo:estimates} apply also
  to~(\ref{eq:weak}).
\end{lemma}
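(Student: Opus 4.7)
The plan is to approximate $w$ by smooth, compactly supported functions so that the regularized problem falls strictly within the scope of~\textbf{(H1)}--\textbf{(H3)}, apply Theorems~\ref{teo:kruzkov}, \ref{teo:tv} and~\ref{teo:estimates} to it, and pass to the limit. Concretely, let $(\rho_n)$ be a standard mollifier on $\reali\times\reali^N$, extend $w$ by zero outside $\rpic\times\reali^N$, and set $\tilde w_n = \chi_n \cdot (\rho_n\ast w)$, where $\chi_n \in \Cc\infty(\reali\times\reali^N;[0,1])$ is a smooth cut-off converging to $1$ on compacts. Each $\tilde w_n$ is smooth, compactly supported, uniformly bounded by $\norma{w}_{\L\infty}$, and $\tilde w_n \to w$ in $\Lloc1(\rpic\times\reali^N;\reali)$.

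Replacing $w$ by $\tilde w_n$ in~(\ref{eq:weak}) yields a problem with source $\tilde F_n(t,x,u) = F(t,x,u) + (K\ast_x \tilde w_n)(t,x)$. By~\textbf{(K)} and the smoothness of $\tilde w_n$, $K\ast_x \tilde w_n$ is jointly $\C2$ on $\rpis\times\reali^N$ and independent of $u$; together with~\textbf{(H1)} for $(f,F)$ this gives $\tilde F_n \in \C1(\Omega;\reali)$, $\pt_u \tilde F_n = \pt_u F$, and $\nabla(\tilde F_n - \div f) = \nabla(F - \div f) + (\nabla K)\ast_x \tilde w_n$, together with the pointwise bound $\modulo{(\nabla K)\ast_x \tilde w_n (t,x)} \leq \norma{\nabla K(t,\cdot)}_{\L1} \norma{w}_{\L\infty}$, valid uniformly in $n$. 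Using the compact support of $\tilde w_n$ and~\textbf{(H1)}--\textbf{(H3)} for $(f,F)$, one checks that $(f,\tilde F_n)$ verifies~\textbf{(H1)}, \textbf{(H2)} and~\textbf{(H3)}. Theorems~\ref{teo:kruzkov}, \ref{teo:tv} and~\ref{teo:estimates} therefore produce a unique entropy solution $u_n$ to the regularized problem together with the stated $\BV$-bound and $\L1$-stability estimate; since $\pt_u(K\ast_x \tilde w_n)=0$, the constants $\kappa_o$ and $\kappa$ attached to $\tilde F_n$ equal those attached to $F$, hence are independent of $n$.

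Applying Theorem~\ref{teo:estimates} to $u_n$ and $u_m$, whose source difference is $K\ast_x(\tilde w_n - \tilde w_m)\to 0$ in $\Lloc1$, shows that $(u_n)$ is Cauchy in $\C0([0,T];\Lloc1)$; its limit $u$ satisfies the entropy inequality~(\ref{eq:entropy}) by passage to the limit and is therefore a weak entropy solution of~(\ref{eq:weak}). Uniqueness follows again from Theorem~\ref{teo:estimates}, since for two candidate solutions of~(\ref{eq:weak}) the common source $K\ast_x w$ cancels in the difference. The estimates of Theorems~\ref{teo:tv} and~\ref{teo:estimates} for $u$ are then obtained from those for $u_n$ by lower semicontinuity of $\tv$ under $\Lloc1$-convergence and by dominated convergence in the forcing integrals, using the pointwise bound above as majorant.

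The main technical obstacle is the passage to the limit: one must show that $u_n$ converges strongly enough to identify $u$ as the entropy solution and simultaneously to transfer the $\BV$ and $\L1$ estimates. Once the Cauchy property is established via Theorem~\ref{teo:estimates} with constants uniform in $n$, the rest is routine, precisely because the correction $K\ast_x \tilde w_n$ has zero $\pt_u$-derivative (so $\kappa_o,\kappa$ are untouched) and an explicit $\L\infty$ bound in terms of $\norma{\nabla K}_{\L1}\norma{w}_{\L\infty}$.
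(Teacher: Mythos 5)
Your proposal is correct and follows essentially the same route as the paper's proof: regularize $w$ by smooth approximants $\tilde w_n$, note that the resulting source $F+K\ast_x\tilde w_n$ meets the required hypotheses, apply Theorems~\ref{teo:kruzkov}, \ref{teo:tv} and~\ref{teo:estimates} to the approximate problems, show that the $u_n$ form a Cauchy sequence via Theorem~\ref{teo:estimates} with constants independent of $n$ (since $\partial_u(K\ast_x\tilde w_n)=0$), and pass to the limit. Your write-up is a bit more explicit than the paper's, in particular in singling out the uniformity of $\kappa_o,\kappa$ and in using $\Lloc1$ convergence and lower semicontinuity of $\tv$ to transfer the estimates, but these are elaborations of the same argument rather than a different one.
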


\begin{proof}
  Fix positive $T,R$ and let $w_n$ be a sequence of $\C\infty$
  functions converging to $w$ in $\L1 \left( [0,T]\times
    \reali^N;\reali \right)$. Apply Theorem~\ref{teo:kruzkov} to the
  approximate problem
  \begin{equation}
    \label{eq:weakn}
    \left\{\begin{array}{l@{\qquad}rcl}
        \partial_t u + \Div  f(t,x,u) = F(t,x,u) + K \ast_x w_n
        & (t,x) & \in & \rpis\times\reali^N
        \\
        u(0,x)=u_o(x) & x & \in & \reali^N
        \\
      \end{array}
    \right.
  \end{equation}
  to ensure the existence of its weak entropy solution $u_n$. Apply
  Theorem~\ref{teo:estimates} to estimate the distance between $u_n$
  and $u_{n-1}$:
  \begin{eqnarray*}
    \norma{u_n - u_{n-1}}_{\L\infty([0,T];\L1(\reali^N;\reali))}
    & \leq &
    \int_0^T e^{\kappa(T-t)} \int_{\reali^N} \modulo{K * (w_n-w_{n-1}) (t,x)} 
    \, \mathrm{d}x \, \mathrm{d}t
    \\
    & \leq &
    e^{\kappa T} \, k \,
    \norma{w_n - w_{n-1}}_{\L1([0,T]\times\reali^N;\reali)} 
  \end{eqnarray*}
  showing that the $u_n$ form a Cauchy sequence. Their limit $u$
  solves~(\ref{eq:toy}), as it follows passing to the limit over $n$
  in the integral conditions~(\ref{eq:entropy})--(\ref{eq:data}) and
  applying the Dominated Convergence Theorem. The estimates in
  theorems~\ref{teo:tv} and~\ref{teo:estimates} are extended
  similarly.
\end{proof}

\Section{Proof of Theorem~\ref{teo:tv} }
\label{sec:prooftv}

\begin{lemma}
  Fix a function $\mu_1 \in \Cc\infty(\rpic;\rpic)$ with
  \begin{equation}
    \label{eq:mu}
    \mathrm{supp}(\mu_1) \subseteq \left[0, 1 \right[
    ,\quad
    \int_{\rpis}
    r^{N-1} \mu_1(r) \, \mathrm{d}r = \frac{1}{N \omega_N}
    ,\quad
    \mu_1' \leq 0
    ,\quad
    \mu_1^{(n)}(0) = 0 \mbox{ for } n\geq 1.
  \end{equation}
  Define
  \begin{equation}
    \label{eq:Mu}
    \mu(x) = \frac{1}{\gd^N} \, \mu_1 \left(
      \frac{\norma{x}}{\gd} \right) \,.
  \end{equation}
  Then, recalling that $\omega_0 = 1$,
  \begin{eqnarray}
    \label{eq:mu1}
    \int_{\reali^N} \mu(x) \,\mathrm{d} x
    & = &
    1 \, ,
    \\
    \label{eq:mu2}
    \int_{\reali^N}
    \modulo{x_1} \, \mu_1 \left( \norma{x}\right) \, \mathrm{d}x
    & = &
    \frac{2}{N} \, \frac{\omega_{N-1}}{\omega_N} \,
    \int_{\reali^N}
    \norma{x} \, \mu_1 \left( \norma{x}\right) \, \mathrm{d}x\, ,
    \\
    \label{eq:mu3}
    \int_{\reali^N}
    \norma{x} \, \norma{\nabla \mu (x)} \, \mathrm{d}x
    & = &
    - \int_{\reali^N}
    \norma{x} \, \mu_1' \left( \norma{x}\right) \, \mathrm{d}x
    \;\, = \;\,
    N\,  ,
    \\
    \label{eq:mu4}
    \int_{\reali^N}
    \norma{x}^2 \, \mu_1' \left( \norma{x} \right) \, \mathrm{d}x
    & = &
    - (N+1) \,
    \int_{\reali^N}
    \norma{x} \, \mu_1 \left( \norma{x}\right) \, \mathrm{d}x \,.
  \end{eqnarray}
\end{lemma}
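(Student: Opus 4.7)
The plan is to reduce each of the four identities to a one-dimensional radial integral by passing to spherical coordinates. Recall the classical identity $\int_{\reali^N} h(\norma{x}) \, \mathrm{d}x = N\omega_N \int_0^\infty h(r)\,r^{N-1}\,\mathrm{d}r$ for any radial integrand. Applied to $h(r) = \gd^{-N}\mu_1(r/\gd)$ and combined with the substitution $r \mapsto r/\gd$ and the normalization in~(\ref{eq:mu}), this yields~(\ref{eq:mu1}) at once.

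For~(\ref{eq:mu3}) and~(\ref{eq:mu4}), I would first compute $\nabla\mu(x) = \gd^{-(N+1)}\mu_1'(\norma{x}/\gd)\,x/\norma{x}$, whence $\norma{\nabla\mu(x)} = -\gd^{-(N+1)}\mu_1'(\norma{x}/\gd)$ because $\mu_1'\le 0$; the rescaling $r\mapsto r/\gd$ then removes $\gd$ and reduces the first equality in~(\ref{eq:mu3}) to the identity between the multiple integrals with $\gd=1$. The remaining one-dimensional statements, i.e.\ the second equality in~(\ref{eq:mu3}) and the identity~(\ref{eq:mu4}), follow from a single integration by parts
\begin{displaymath}
\int_0^\infty r^{N+a}\mu_1'(r)\,\mathrm{d}r \;=\; -(N+a)\int_0^\infty r^{N+a-1}\mu_1(r)\,\mathrm{d}r \qquad (a \in \{0,1\}),
\end{displaymath}
in which the boundary terms vanish (at $r=0$ thanks to the factor $r^{N+a}$, at $r=1$ thanks to $\mathrm{supp}(\mu_1)\subseteq[0,1[$), combined with the normalization $\int_0^\infty r^{N-1}\mu_1(r)\,\mathrm{d}r = 1/(N\omega_N)$ to identify the right-hand side as $N$ in the case $a=0$.

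The subtle step is~(\ref{eq:mu2}). Writing $x = ry$ with $r=\norma{x}$ and $y\in S^{N-1}$, both sides factor as the same radial integral $\int_0^\infty r^N\mu_1(r)\,\mathrm{d}r$ times an angular integral, so the identity reduces to $\int_{S^{N-1}}\modulo{y_1}\,\mathrm{d}\sigma(y) = 2\omega_{N-1}$. I would prove this either through the area formula for graphs, parametrizing the upper and lower hemispheres of $S^{N-1}$ as $x_1 = \pm\sqrt{1-x_2^2-\cdots-x_N^2}$ over the equatorial unit ball (on each hemisphere $\modulo{y_1}\,\mathrm{d}\sigma = \mathrm{d}x_2\cdots\mathrm{d}x_N$, so the two contributions sum to $2\omega_{N-1}$), or alternatively by a direct spherical-coordinate computation matched with~(\ref{eq:W}), which gives $\omega_N = 2W_N\omega_{N-1}$ and hence the same value. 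This mean-width identity for the sphere is the only genuinely non-routine ingredient; the rest is elementary calculus once the radial symmetry has been used.
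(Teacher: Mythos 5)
Your proof is correct, and for identities (\ref{eq:mu1}), (\ref{eq:mu3}), (\ref{eq:mu4}) it does exactly what the paper's proof does (the paper dismisses them as ``immediate'' or as ``follow directly from an integration by parts''; you spell out the $\int_0^\infty r^{N+a}\mu_1'\,\mathrm{d}r = -(N+a)\int_0^\infty r^{N+a-1}\mu_1\,\mathrm{d}r$ step, which is the right one). The interesting divergence is (\ref{eq:mu2}). The paper treats $N=1,2,3$ by unspecified ``direct computation'' and then, for $N\ge 4$, writes out full $N$-dimensional spherical coordinates, peels off the $\theta_{N-1}$ factor $\int_0^\pi \modulo{\cos\theta_{N-1}}(\sin\theta_{N-1})^{N-2}\,\mathrm{d}\theta_{N-1} = 2/(N-1)$, and identifies the remaining angular product as $(N-1)\omega_{N-1}$. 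Your route first reduces both sides to the single clean sphere identity $\int_{S^{N-1}}\modulo{y_1}\,\mathrm{d}\sigma = 2\omega_{N-1}$ by factoring out the common radial integral $\int_0^\infty r^N\mu_1(r)\,\mathrm{d}r$, and then proves that identity by the graph parametrization of the two hemispheres, where the Jacobian $\sqrt{1+\modulo{\nabla g}^2} = 1/\modulo{y_1}$ cancels exactly against $\modulo{y_1}$, giving $\omega_{N-1}$ per hemisphere. This is more economical than the paper's computation, works uniformly for all $N\ge 1$ (so no separate low-dimensional cases), and isolates the actual geometric content — the mean width of the sphere — as a named sub-lemma rather than burying it in a chain of Wallis integrals. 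The alternative you mention (spherical coordinates plus (\ref{eq:W})) is essentially the paper's argument, so your write-up already contains a pointer back to it.
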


\begin{proof}
  The first relation is immediate. Equalities~(\ref{eq:mu3})
  and~(\ref{eq:mu4}) follow directly from an integration by
  parts. Consider~(\ref{eq:mu2}). The cases $N = 1,2,3$ follow from
  direct computations. Let $N \geq 4$ and pass to spherical
  coordinates $(\rho, \theta_1 , \ldots, \theta_{N-1})$,
  \begin{eqnarray*}
    x_1 & = & \rho \, \cos \theta_{N-1}
    \\
    x_2 & = & \rho \, \sin \theta_{N-1} \, \cos \theta_{N-2}
    \\
    \vdots & \vdots & \vdots
    \\
    x_{N-1} & = & \rho \, \sin \theta_{N-1} \, \sin \theta_{N-2} \cdots \cos \theta_1
    \\
    x_N & = & \rho \, \sin \theta_{N-1} \, \sin \theta_{N-2} \cdots \sin \theta_1
  \end{eqnarray*}
  with $\rho \in \rpis$, $\theta_1 \in \left[0, 2\pi\right[$ and
  $\theta_j \in [0,\pi]$ for $j=2, \ldots, N-1$. If $N \geq 4$
  \begin{eqnarray*}
    & &
    \int_{\reali^N}
    \modulo{x_1} \, \mu_1 \left( \norma{x}\right) \, \mathrm{d}x
    \\
    & = &
    \int_{\rpis} \int_0^{2\pi} \int_0^\pi \cdots \int_0^\pi
    \modulo{\cos \theta_{N-1}} \, \rho^N \, \mu_1(\rho) \,
    \left( \prod_{j=2}^{N-1} (\sin \theta_j)^{j-1} \right)
    \mathrm{d}\theta_{N-1} \, \mathrm{d} \theta_{N-2} \cdots
    \mathrm{d}\theta_1
    \, \mathrm{d}\rho
    \\
    & = &
    \int_0^{2\pi} \int_0^\pi \cdots \int_0^\pi
    \left( \prod_{j=2}^{N-2} (\sin \theta_j)^{j-1} \right)
    \mathrm{d} \theta_{N-2} \cdots \mathrm{d}\theta_1
    \\
    & &
    \qquad
    \times
    \left(
      \int_0^\pi \modulo{\cos \theta_{N-1}} \left(\sin \theta_{N-1}\right)^{N-2} \,
      \mathrm{d}\theta_{N-1}
    \right)
    \int_{\rpis} \rho^N \, \mu_1(\rho) \, \mathrm{d}\rho
    \\
    & = &
    (N-1) \omega_{N-1} \frac{2}{N-1} \frac{1}{N\omega_N}
    \int_{\reali^N}
    \norma{x} \, \mu_1 \left( \norma{x}\right) \, \mathrm{d}x
    \\
    & = &
    \frac{2}{N} \, \frac{\omega_{N-1}}{\omega_N} \,
    \int_{\reali^N}
    \norma{x} \, \mu_1 \left( \norma{x}\right) \, \mathrm{d}x
  \end{eqnarray*}
  completing the proof.
\end{proof}

Recall the following theorem (see~\cite[Theorem~3.9 and
Remark~3.10]{AmbrosioFuscoPallara}):

\begin{theorem}
  \label{thm:AFP}
  Let $u \in \Lloc1(\reali^N;\reali)$, then $u \in \BV(
  \reali^N;\reali)$ if and only if there exists a sequence $u_n$ in
  $\C\infty (\reali^N;\reali)$ converging to $u$ in $\Lloc1$ and
  satisfying
  \begin{displaymath}
    \lim_{n\to +\infty} \int_{\reali^N} \norma{\nabla u_n (x)}\, \mathrm{d}x
    =
    L \quad \mbox{ with } \quad
    L < \infty \,.
  \end{displaymath}
  Moreover, $\tv(u)$ is the least constant $L$ for which there exists
  a sequence as above.
\end{theorem}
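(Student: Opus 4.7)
The plan is to prove the biconditional together with the sharpness statement $\tv(u)=\inf L$ by treating the two implications separately and then combining them through lower semicontinuity.

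\emph{Forward direction ($u\in\BV \Rightarrow$ existence of a sequence with $L=\tv(u)$).} I would mollify. Let $\rho\in\Cc\infty(\reali^N;\rpic)$ be a standard radial kernel with $\int\rho=1$, set $\rho_\epsilon(x)=\epsilon^{-N}\rho(x/\epsilon)$, and define $u_\epsilon=u\ast\rho_\epsilon$. Classical results give $u_\epsilon\in\C\infty$ and $u_\epsilon\to u$ in $\Lloc1$. Writing the distributional gradient as a vector Radon measure $Du$ and using $\nabla u_\epsilon=Du\ast\rho_\epsilon$ together with the polar decomposition $Du=\sigma\,\modulo{Du}$, with $\modulo{\sigma}=1$ $\modulo{Du}$-almost everywhere, one gets the pointwise estimate
\begin{displaymath}
  \norma{\nabla u_\epsilon(x)}
  \leq
  (\modulo{Du}\ast\rho_\epsilon)(x),
\end{displaymath}
whose spatial integral equals $\modulo{Du}(\reali^N)=\tv(u)$ by Fubini. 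Hence $\limsup_{\epsilon\to 0}\int_{\reali^N}\norma{\nabla u_\epsilon}\,\mathrm{d}x\leq\tv(u)$, which, combined with the lower bound proved below, produces a sequence achieving limit exactly $\tv(u)$.

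\emph{Reverse direction (existence of a sequence $\Rightarrow$ $u\in\BV$ and $\tv(u)\leq L$).} Suppose $u_n\in\C\infty$ with $u_n\to u$ in $\Lloc1$ and $\int\norma{\nabla u_n}\,\mathrm{d}x\to L<\infty$. For any test field $\psi\in\Cc1(\reali^N;\reali^N)$ with $\norma{\psi}_{\L\infty}\leq 1$, integration by parts yields
\begin{displaymath}
  \int_{\reali^N} u_n\,\div\psi\,\mathrm{d}x
  = -\int_{\reali^N}\nabla u_n\cdot\psi\,\mathrm{d}x
  \leq \int_{\reali^N}\norma{\nabla u_n}\,\mathrm{d}x.
\end{displaymath}
Since $\div\psi$ is continuous with compact support, the $\Lloc1$ convergence of $u_n$ implies $\int u_n\,\div\psi\,\mathrm{d}x\to\int u\,\div\psi\,\mathrm{d}x$. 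Passing to the limit and then taking the supremum over $\psi$ in the definition of $\tv$ gives $\tv(u)\leq L$, so $u\in\BV$.

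Combining the two steps proves the biconditional; moreover the reverse direction shows $\tv(u)\leq L$ for every admissible sequence, while the forward direction exhibits one achieving $L=\tv(u)$, so $\tv(u)$ is indeed the least admissible constant. The only mildly delicate step is the pointwise inequality $\norma{(Du)\ast\rho_\epsilon(x)}\leq(\modulo{Du}\ast\rho_\epsilon)(x)$ for the vector-valued measure $Du$: it requires the Radon-Nikodym polar decomposition together with a Jensen-type estimate against the positive kernel $\rho_\epsilon$. Everything else reduces to routine integration by parts and dominated convergence.
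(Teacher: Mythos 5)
Your proof is correct. Note first that the paper does not actually prove this statement: it is quoted verbatim from~\cite[Theorem~3.9 and Remark~3.10]{AmbrosioFuscoPallara} and used as a black box, so there is no in-paper proof to compare against. What you have reconstructed is exactly the standard argument found in that reference: mollification plus the measure-theoretic bound $\norma{\nabla u_\epsilon}\le\modulo{Du}\ast\rho_\epsilon$ and Fubini for the upper bound $\limsup_\epsilon\int\norma{\nabla u_\epsilon}\le\tv(u)$; and lower semicontinuity of the total variation under $\Lloc1$ convergence (proved directly from the distributional definition via integration by parts) for the lower bound $\tv(u)\le L$ for any admissible sequence. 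Combining the two yields both the equivalence and the fact that $\tv(u)$ is the least attainable constant.

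One small terminological remark: the inequality $\norma{(Du\ast\rho_\epsilon)(x)}\le(\modulo{Du}\ast\rho_\epsilon)(x)$ is not really a Jensen-type estimate; it is simply the triangle inequality $\norma{\int\sigma\,\mathrm{d}\nu}\le\int\norma{\sigma}\,\mathrm{d}\nu$ applied to the positive measure $\nu=\rho_\epsilon(x-\cdot)\modulo{Du}$ and the unit-norm density $\sigma$ from the polar decomposition. This does not affect the correctness, only the label you attach to the step.
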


\begin{proposition}
  \label{prop:normtv}
  Fix $\mu_1$ as in~(\ref{eq:mu}). Let $u \in \Lloc1(\reali^N;
  \reali)$ admit a constant $\tilde C$ such that for all positive
  $\gd$, $R$ and with $\mu$ as in~(\ref{eq:Mu})
  \begin{equation}
    \label{ineq}
    \frac{1}{\gd} \int_{\reali^N} \int_{B(x_o,R)}
    \modulo{u(x)-u(x-z)} \, \mu(z)
    \, \mathrm{d}x \, \mathrm{d}z
    \leq
    \tilde C.
  \end{equation}
  Then, $u \in \BV(\reali^N; \reali)$ and $\tv(u) \leq \tilde C /
  C_1$, where
  \begin{equation}
    \label{eq:C1}
    C_1
    =
    \int_{\reali^N} \modulo{x_1} \, \mu_1\left(\norma{x} \right) 
    \, \mathrm{d}x \,.
  \end{equation}
  Note that $C_1 \in \left]0, 1 \right[$. If moreover $u \in
  \C1(\reali^N;\reali)$, then
  \begin{equation}
    \label{eq:TV1}
    \tv (u)
    =
    \frac{1}{C_1}
    \lim_{\gd \to 0}
    \frac{1}{\gd} \int_{\reali^N} \int_{\reali^N}
    \modulo{u(x)-u(x-z)} \, \mu(z)
    \, \mathrm{d}x \, \mathrm{d}z \,.
  \end{equation}
\end{proposition}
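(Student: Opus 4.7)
The plan has two parts. First, I would establish the identity~(\ref{eq:TV1}) for $\C1$ functions directly by a rescaling argument. Second, for a general $u \in \Lloc1$ satisfying~(\ref{ineq}), I would regularize $u$ by convolution, transfer the bound to the mollification, apply the $\C1$ computation through Fatou's lemma, and conclude via Theorem~\ref{thm:AFP}.

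For the $\C1$ case, the substitution $z = \gd w$ yields
\begin{displaymath}
  \frac{1}{\gd} \int_{\reali^N}\!\!\int_{\reali^N}
  \modulo{u(x)-u(x-z)} \, \mu(z) \, \mathrm{d}x \, \mathrm{d}z
  \;=\;
  \int_{\reali^N}\!\!\int_{\reali^N} \frac{\modulo{u(x)-u(x-\gd w)}}{\gd} \, \mu_1(\norma{w}) \, \mathrm{d}x \, \mathrm{d}w.
\end{displaymath}
Writing $u(x)-u(x-\gd w) = \gd \int_0^1 \nabla u(x - s \gd w) \cdot w \, \mathrm{d}s$ and using translation invariance gives the uniform bound $\int_{\reali^N}(\modulo{u(x)-u(x-\gd w)}/\gd)\,\mathrm{d}x \leq \int_{\reali^N} \modulo{\nabla u(x)\cdot w}\,\mathrm{d}x$, after which Fatou pinches the $x$-integral as $\gd \to 0$ to exactly $\int_{\reali^N}\modulo{\nabla u(x)\cdot w}\,\mathrm{d}x$. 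Dominated convergence in $w$ (using the compact support of $\mu_1$) then yields the limit $\int_{\reali^N}\!\!\int_{\reali^N}\modulo{\nabla u(x)\cdot w}\,\mu_1(\norma{w})\,\mathrm{d}w\,\mathrm{d}x$. Rotating $w$ so that $\nabla u(x)/\norma{\nabla u(x)}$ aligns with $e_1$ collapses the inner integral to $C_1 \norma{\nabla u(x)}$, establishing~(\ref{eq:TV1}).

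For the general case, I would first let $R \to +\infty$ in~(\ref{ineq}) by monotone convergence, obtaining the bound with $B(x_o,R)$ replaced by $\reali^N$. Then, with a standard mollifier $\rho_\epsilon$ and $u_\epsilon = u \ast \rho_\epsilon \in \C\infty$, the triangle inequality, Fubini and translation invariance transfer the bound to $u_\epsilon$ with the same constant $\tilde C$. Applying Fatou as $\gd \to 0$ to the rescaled expression above, together with the rotation argument, gives $C_1 \int_{\reali^N}\norma{\nabla u_\epsilon(x)}\,\mathrm{d}x \leq \tilde C$. Since $u_\epsilon \to u$ in $\Lloc1$ as $\epsilon \to 0$, Theorem~\ref{thm:AFP} concludes that $u \in \BV(\reali^N;\reali)$ with $\tv(u) \leq \tilde C / C_1$. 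The bounds on $C_1$ are immediate: positivity is clear, while $\mathrm{supp}\,\mu_1 \subset [0,1[$ forces $\modulo{x_1} \leq \norma{x} < 1$ on $\mathrm{supp}\,\mu_1(\norma{\cdot})$, which combined with~(\ref{eq:mu1}) at $\gd = 1$ gives $C_1 < 1$.

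The main subtlety is that one cannot simply invoke dominated convergence in the regularization step, since $\nabla u_\epsilon$ is not known to be integrable \emph{a priori}; Fatou's lemma is the correct tool and produces exactly the inequality on $\int \norma{\nabla u_\epsilon}$ needed to feed into Theorem~\ref{thm:AFP}. The rest of the argument (the Taylor expansion and the rotational-invariance computation of $\int \modulo{v \cdot w}\mu_1(\norma{w})\mathrm{d}w = C_1\norma{v}$) is routine.
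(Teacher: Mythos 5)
Your proposal is correct and takes essentially the same approach as the paper's proof: mollify $u$, transfer the bound~(\ref{ineq}) to the mollification, compute the $\gd \to 0$ limit via the Taylor expansion and the rotational-invariance identity $\int_{\reali^N} \modulo{v\cdot w}\,\mu_1(\norma{w})\,\mathrm{d}w = C_1\norma{v}$, and conclude through Theorem~\ref{thm:AFP}. The only variation is the order of limits: you send $R\to\infty$ first (forcing Fatou), while the paper keeps $R$ finite so that the dominated convergence theorem applies to the smooth mollification on a compact set, and only afterwards lets $R\to\infty$ and $h\to 0$; both routes are sound.
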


\begin{proof}
  We introduce now a regularisation of $u$: $u_h = u \ast \mu_h$, with
  $\mu_h(x) = \mu_1 \left( \norma{x}/h \right) / h^N$. Note that $u_h
  \in \C\infty (\reali^N; \reali)$ and $u_h$ converges to $u$ in
  $\Lloc1$ as $h \to 0$. Furthermore, for $R$ and $h$ positive, we
  have
  \begin{eqnarray*}
    & &
    \frac{1}{\gd}
    \int_{\reali^N} \int_{B(x_o,R)}
    \modulo{u_h(x)-u_h(x-z)} \, \mu(z)
    \, \mathrm{d}x \, \mathrm{d}z
    \\
    & \leq &
    \frac{1}{\gd}
    \int_{\reali^N} \int_{B(x_o,R+h)}
    \modulo{u(x)-u(x-z)} \, \mu(z)
    \, \mathrm{d}x \, \mathrm{d}z
    \\
    & \leq &
    \tilde C
  \end{eqnarray*}
  and
  \begin{displaymath}
    \frac{u_h(x) - u_h(x-\gd z)}{\gd}
    =
    \int_0^1 \nabla u_h(x-\gd s z) \cdot z \,\mathrm{d}s \,.
  \end{displaymath}
  Thanks to the Dominated Convergence Theorem, at the limit $\gd\to 0$
  we get
  \begin{displaymath}
    \int_{\reali^N} \int_{B(x_o, R)}
    \modulo{\nabla u_h(x) \cdot z} \, \mu_1(\norma{z})
    \, \mathrm{d}x \, \mathrm{d}z
    \leq
    \tilde C \,.
  \end{displaymath}

  Remark that for fixed $x\in B(x_o, R)$, when $\nabla u_h(x) \neq 0$,
  the scalar product $\nabla u_h(x)\cdot z$ is positive (respectively,
  negative) when $z$ is in a half-space, say $H_x^+$ (respectively,
  $H_x^-$). We can write $z = \alpha \frac{\nabla
    u_h(x)}{\norma{\nabla u_h(x)}} + w$, with $\alpha \in \reali$ and
  $w$ in the hyperplane $H^o_x = \nabla u_h(x)^\perp$. Hence
  \begin{eqnarray*}
    \int_{\reali^N}
    \modulo{\nabla u_h(x) \cdot z} \, \mu_1(\norma{z}) \, \mathrm{d}z
    & = &
    \int_{H^+_x} \!\!
    \nabla u_h(x) \cdot z \, \mu_1(\norma{z}) \,
    \mathrm{d}z
    +
    \int_{H^-_x} \!\!
    \nabla u_h(x) \cdot (-z) \, \mu_1(\norma{z}) \,
    \mathrm{d}z
    \\
    & = &
    2 \int_{H^+_x}
    \nabla u_h(x) \cdot z \, \mu_1(\norma{z}) \, \mathrm{d}z
    \\
    & = &
    2 \int_{\rpis} \int_{H^o_x}
    \alpha \, \norma{\nabla u_h(x)} \,
    \mu_1(\sqrt{\alpha^2+\norma{w}^2})
    \, \mathrm{d}w \, \mathrm{d}\alpha
    \\
    & = &   \int_{\reali} \int_{H^o_x}
    \modulo{\alpha} \, \norma{\nabla u_h(x)} \,
    \mu_1(\sqrt{\alpha^2+\norma{w}^2})
    \, \mathrm{d}w \, \mathrm{d}\alpha
    \\
    & = & \norma{\nabla u_h(x)} \int_{\reali^N} \modulo{z_1} \,
    \mu_1(\norma{z}) \, \mathrm{d}z\, .
  \end{eqnarray*}
  Define $C_1$ as in~(\ref{eq:C1}) and note that $C_1 \in \left]0,
    1\right[$. Then we obtain, for all $R>0$,
  \begin{equation}
    \int_{B(x_o, R)} \norma{\nabla u_h(x)} \, \mathrm{d}x
    \leq
    \frac{\tilde C}{C_1} \,.
  \end{equation}
  Finally when $R \to \infty$ we get $\int_{\reali^N} \norma{\nabla
    u_h(x)} \, \mathrm{d}x \leq \tilde C / C_1$ and in the limit $h\to
  0$, by Theorem~\ref{thm:AFP} also $\tv(u) \leq \tilde C / C_1$,
  concluding the proof of the first statement.

  Assume now that $u \in \C1(\reali^N;\reali)$. Then, using the same
  computations as above,
  \begin{eqnarray*}
    & &
    \lim_{\gd \to 0}
    \frac{1}{\gd}
    \int_{\reali^N} \int_{\reali^N}
    \modulo{u (x)-u(x-z)} \, \mu(z)
    \, \mathrm{d}x \, \mathrm{d}z
    \\
    & = &
    \lim_{\gd \to 0}
    \int_{\reali^N} \int_{\reali^N}
    \modulo{\int_0^1 \nabla u(x-\gd s z) \cdot z \,\mathrm{d}s}
    \, \mu_1 (\norma{z})
    \, \mathrm{d}x \, \mathrm{d}z
    \\
    & = &
    C_1\, \tv(u) \,,
  \end{eqnarray*}
  completing the proof.
\end{proof}

In the following proof, this property of any function $u \in
\BV(\reali^N;\reali)$ will be of use:
\begin{equation}
  \label{eq:afp}
  \int_{\reali^N} \modulo{u(x)-u(x-z)} \, \mathrm{d}x
  \leq
  \norma{z} \, \tv(u) 
  \qquad \mbox{ for all } z \in \reali^N .
\end{equation}
For a proof, see~\cite[Remark~3.25]{AmbrosioFuscoPallara}.

\begin{proofof}{Theorem~\ref{teo:tv}}
  Assume first that $u_o \in \C1(\reali^N;\reali)$, the general case
  will be considered only at the end of this proof.

  Let $u$ be the weak entropy solution to~(\ref{eq:probv}). Denote $u
  = u(t,x)$ and $v = u(s,y)$ for $(t,x), (s,y) \in \rpis \times
  \reali^N$. Then, for all $k,l \in \reali$ and for all test functions
  $\phi = \phi(t,x,s,y)$ in $\Cc1 \left((\rpis \times \reali^N)^2;
    \rpic \right)$, we have
  \begin{equation}
    \label{eq:utv}
    \!\!
    \begin{array}{r}
      \displaystyle
      \!\!\int_{\rpis} \!\! \int_{\reali^N} \!\!
      \left[
        (u-k) \, \dt \phi
        +
        \left( f(t,x,u) - f(t,x,k) \right) \nabla_x \phi
        +
        \left(F(t,x,u)-\div f(t,x,k)\right)\phi
      \right]
      \\
      \times
      \mathrm{sign}(u-k) \,
      \mathrm{d}x \, \mathrm{d}t
      \geq
      0
    \end{array}
  \end{equation}
  for all $(s,y) \in \rpis \times \reali^N$, and
  \begin{equation}
    \label{eq:vtv}
    \!\!
    \begin{array}{r}
      \displaystyle
      \!\!\!\!\!\!
      \int_{\rpis} \!\! \int_{\reali^N} \!\!
      \left[
        (v-l) \, \ds \phi
        +
        \left( f(s,y,v) - f(s,y,l) \right) \nabla_y \phi
        +
        (F(s,y,v)-\div f(s,y,l))\phi
      \right]
      \\
      \times
      \mathrm{sign}(v-l) \,
      \mathrm{d}y \, \mathrm{d}s
      \geq
      0
    \end{array}
  \end{equation}
  for all $(t,x) \in \rpis \times \reali^N$. Let $\Phi \in \Cc\infty
  (\rpis \times \reali^N;\rpic)$, $\Psi \in \Cc\infty (\reali \times
  \reali^N;\rpic)$ and set
  \begin{equation}
    \label{eq:phi}
    \varphi(t,x,s,y)=\Phi(t,x) \, \Psi(t-s,x-y) \,.
  \end{equation}
  Observe that $\dt \varphi + \partial_s \phi = \Psi \, \dt \Phi$,
  $\nabla_x \phi = \Psi \, \nabla_x\Phi + \Phi \, \nabla_x\Psi$,
  $\nabla_y \phi = -\Phi \, \nabla_x\Psi$. Choose $k = v(s,y)$
  in~(\ref{eq:utv}) and integrate with respect to
  $(s,y)$. Analogously, take $l = u(t,x)$ in~(\ref{eq:vtv}) and
  integrate with respect to $(t,x)$. Summing the obtained
  inequalities, we get
  \begin{equation}
    \label{eq:sumtv}
    \!\!
    \begin{array}{r}
      \!\!  \displaystyle
      \int_{\rpis} \!\! \int_{\reali^N} \!\!
      \int_{\rpis} \!\! \int_{\reali^N} \!\!
      \!\!\!\!
      \mathrm{sign}(u-v)
      \bigg[
      (u-v) \, \Psi \, \dt \Phi +
      \left( f(t,x,u) - f(t,x,v) \right) \cdot
      \left( \nabla \Phi \right) \Psi
      \\
      +
      \left( f(s,y,v) - f(s,y,u) - f(t,x,v) + f(t,x,u) \right) \cdot
      \left( \nabla \Psi \right) \Phi
      \\
      +
      \left(F(t,x,u) - F(s,y,v) + \div f(s,y,u) - \div f(t,x,v) \right) \phi
      \bigg]
      \mathrm{d}x \, \mathrm{d}t \, \mathrm{d}y \, \mathrm{d}s
      \geq
      0 .
    \end{array}
  \end{equation}
  Introduce a family of functions $\{Y_\vartheta\}_{\vartheta>0}$ such
  that for any $\vartheta>0$:\\
  \begin{minipage}{0.5\linewidth}
    \centering
    \begin{psfrags}
      \psfrag{YT}{$Y_\vartheta$} \psfrag{1}{$1$} \psfrag{0}{$0$}
      \psfrag{e}{$\vartheta$} \psfrag{t}{$t$}
      \includegraphics[width=3.5cm]{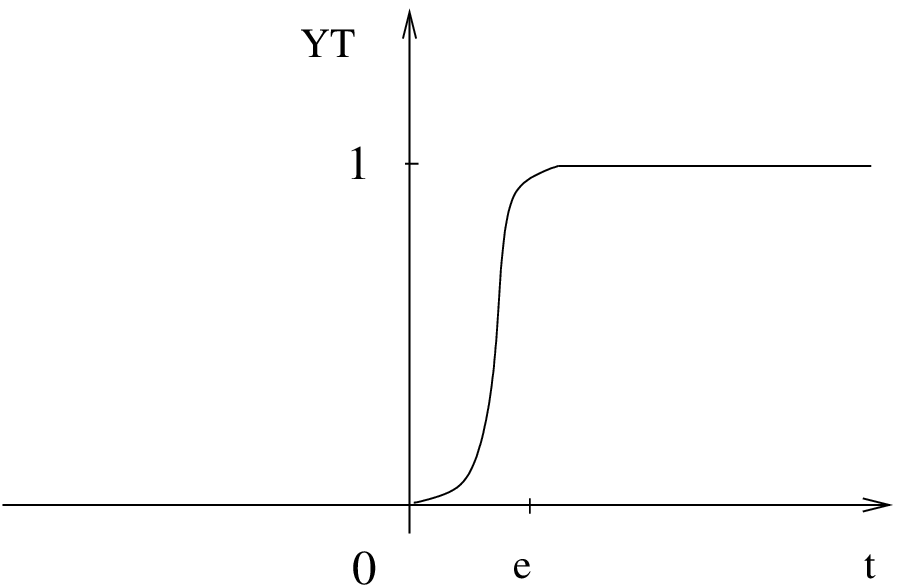}
    \end{psfrags}

    \begin{psfrags}
      \psfrag{Yp}{$Y'_\vartheta$} \psfrag{1}{$1$} \psfrag{0}{$0$}
      \psfrag{e}{$\vartheta$} \psfrag{t}{$t$}
      \includegraphics[width=3.5cm]{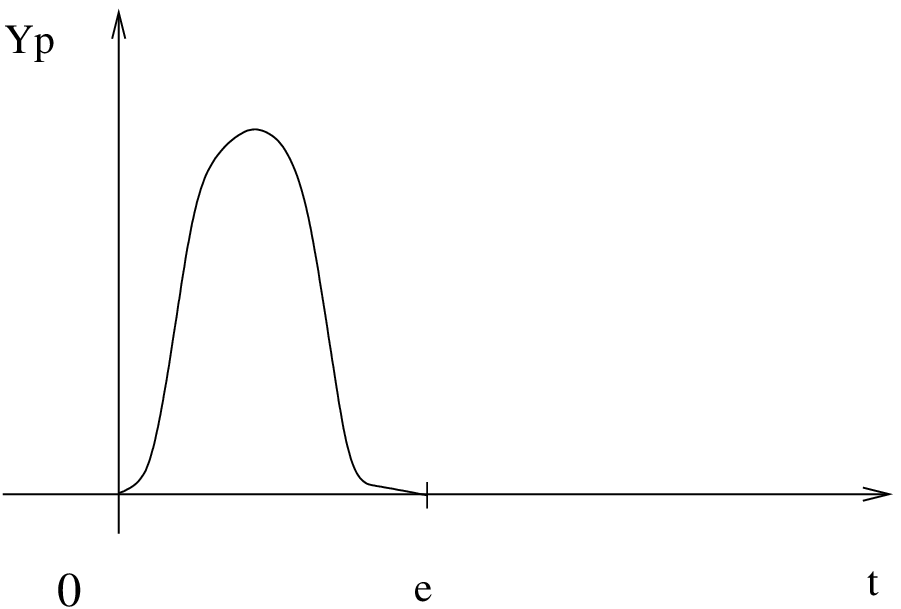}
    \end{psfrags}
  \end{minipage}\hfil
  \begin{minipage}{0.45\linewidth}
    \begin{equation}
      \label{eq:Y}
      \begin{array}{rcl}
        Y_\vartheta(t)
        & = &
        \displaystyle
        \int_{-\infty}^t Y_\vartheta' (s) \,\mathrm{d}s
        \\
        Y_\vartheta'(t)
        & = &
        \displaystyle
        \frac{1}{\vartheta} \, Y' \left( \frac{t}{\vartheta} \right)
        \\
        Y' & \in & \Cc\infty(\reali;\reali)
        \\
        \mathrm{supp}(Y')
        & \subset &
        \left]0,1\right[
        \\
        Y'
        &\geq &
        0
        \\
        \displaystyle
        \int_\reali Y'(s) \, \mathrm{d}s
        & = &
        1 \,.
      \end{array}
    \end{equation}
  \end{minipage}

  \noindent Let $M = \norma{\pt_u f}_{\L\infty(\Omega; \reali^{N})}$
  and define for $\epsilon, \theta, T_o, R > 0$, $x_o \in \reali^N$,
  \begin{figure}[htbp]
    \centering
    \begin{psfrags}
      \psfrag{chi}{$\chi$} \psfrag{1}{$1$} \psfrag{0}{$0$}
      \psfrag{t}{$t$} \psfrag{Te}{$T+\epsilon$} \psfrag{e}{$\epsilon$}
      \psfrag{T}{$T$}
      \includegraphics[width=0.45\linewidth]{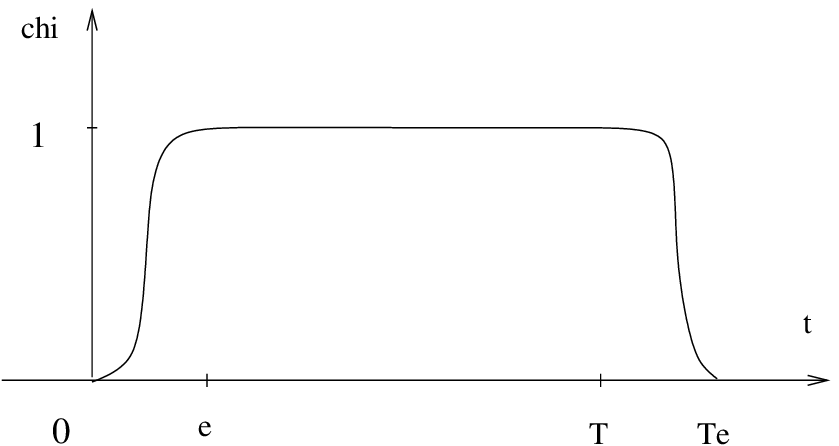}
    \end{psfrags}
    \begin{psfrags}
      \psfrag{psi}{$\psi$} \psfrag{1}{$1$} \psfrag{xo}{$x_o$}
      \psfrag{x}{$x$} \psfrag{a}{$a$} \psfrag{b}{$b$}
      \includegraphics[width=0.45\linewidth]{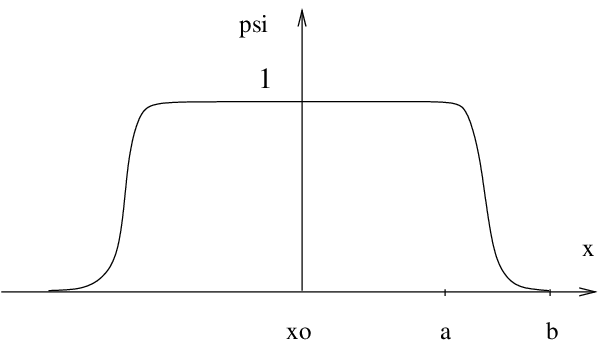}
    \end{psfrags}
    \caption{Graphs of $\chi$, left, and of $\psi$, right. Here
      $a=R+M(T_o-t)$ and $b=R+M(T_o-t)+\theta$.}
    \label{fig:chipsi}
  \end{figure}
  (see Figure~\ref{fig:chipsi}):
  \begin{equation}
    \label{eq:chipsi}
    \chi(t) = Y_\epsilon(t)-Y_\epsilon(t-T)
    \quad \mbox{ and } \quad
    \psi(t,x) = 1-Y_\theta\left(\norma{x-x_o}-R-M(T_o-t)\right)
    \geq 0,
  \end{equation}
  where we also need the compatibility conditions $T_o \geq T$ and $M
  \epsilon \leq R + M (T_o - T)$.  Observe that $\chi \to
  \mathbf{1}_{[0,T]}$ and $\chi' \to \delta_0 - \delta_T$ as
  $\epsilon$ tends to $0$.  On $\chi$ and $\psi$ we use the bounds
  \begin{displaymath}
    \chi
    \leq
    \mathbf{1}_{[0,T+\epsilon]}
    \quad \mbox{ and } \quad
    \mathbf{1}_{B(x_o, R+M(T_o-t))}
    \leq
    \psi
    \leq
    \mathbf{1}_{B(x_o, R+M(T_o-t)+\theta)} \,.
  \end{displaymath}
  In~(\ref{eq:sumtv}), choose $\Phi(t,x) = \chi(t) \, \psi(t,x)$. With
  this choice, we have
  \begin{equation}
    \label{eq:Derivatives}
    \dt \Phi 
    = 
    \chi' \, \psi - M \, \chi \,  Y_\theta'
    \quad \mbox{ and } \quad
    \nabla \Phi 
    = -\chi \, Y_\theta' \, \frac{x-x_o}{\norma{x-x_o}} \,.
  \end{equation}
  Setting $\displaystyle B(t,x,u,v) = \modulo{u-v} M +
  \mathrm{sign}(u-v) \left( f(t,x,u) - f(t,x,v) \right) \cdot
  \frac{x-x_o}{\norma{x-x_o}}$, the first line in~(\ref{eq:sumtv})
  becomes
  \begin{eqnarray*}
    \!\!
    & &
    \int_{\rpis} \! \int_{\reali^N} \! \int_{\rpis} \! \int_{\reali^N} \!\!
    \left[
      (u-v) \Psi \, \dt \Phi +
      \! \left( f(t,x,u) - f(t,x,v) \right)
      ( \nabla \Phi) \, \Psi
    \right]\!
    \mathrm{sign}(u-v)
    \mathrm{d}x \, \mathrm{d}t \, \mathrm{d}y \, \mathrm{d}s
    \\
    \!\!
    & = &
    \int_{\rpis} \int_{\reali^N}
    \int_{\rpis} \int_{\reali^N}
    \left(
      \modulo{u-v}\, \chi' \, \psi - B(t,x,u,v) \chi \, Y_\theta'
    \right)
    \, \Psi
    \, \mathrm{d}x \, \mathrm{d}t \, \mathrm{d}y \, \mathrm{d}s
    \\
    \!\!
    & \leq &
    \int_{\rpis} \int_{\reali^N}
    \int_{\rpis} \int_{\reali^N}
    \modulo{u-v} \, \chi' \, \psi \, \Psi
    \, \mathrm{d}x \, \mathrm{d}t \, \mathrm{d}y \, \mathrm{d}s
  \end{eqnarray*}
  since $B(t,x,u,v)$ is positive for all $(t, x, u, v) \in \Omega
  \times \reali$. Thanks to the above estimate and
  to~(\ref{eq:sumtv}), we have
  \begin{displaymath}
    \begin{array}{llcr}
      \displaystyle
      \!\!\!\!\int_{\rpis} \! \int_{\reali^N} \! \int_{\rpis} \! \int_{\reali^N} \!\!\!\!
      &
      \bigg[
      (u-v) \, \chi' \, \psi \, \Psi
      \\
      &
      \displaystyle
      +
      \left(
        f(s,y,v) - f(s,y,u) - f(t,x,v) + f(t,x,u)
      \right)
      \cdot
      (\nabla \Psi) \, \Phi
      \\
      &
      \displaystyle
      +
      \left(
        F(t,x,u) - F(s,y,v) - \div  f(t,x,v) + \div f(s,y,u)
      \right)
      \phi
      \bigg]
      \\
      &
      \displaystyle
      \times \mathrm{sign}(u-v)
      \, \mathrm{d}x\,\mathrm{d}t \, \mathrm{d}y \, \mathrm{d}s
      & \geq &
      0 .
    \end{array}
  \end{displaymath}
  Now, we aim at bounds for each term of this sum. Introduce the
  following notations:
  \begin{eqnarray*}
    I
    & = &
    \int_{\rpis} \int_{\reali^N} \int_{\rpis} \int_{\reali^N}
    \modulo{u-v} \, \chi' \, \psi \, \Psi
    \, \mathrm{d}x\,\mathrm{d}t \, \mathrm{d}y \, \mathrm{d}s\, ,
    \\
    J_x
    & = &
    \int_{\rpis} \int_{\reali^N} \int_{\rpis} \int_{\reali^N}
    \left( f(t,y,v) - f(t,y,u) + f(t,x,u) - f(t,x,v) \right)
    \left( \nabla \Psi \right) \, \Phi
    \\ & &
    \qquad\qquad\qquad\qquad
    \times \,
    \mathrm{sign}(u-v)
    \, \mathrm{d}x\,\mathrm{d}t \, \mathrm{d}y \, \mathrm{d}s\, ,
    \\
    J_t
    & = &
    \int_{\rpis} \int_{\reali^N} \int_{\rpis} \int_{\reali^N}
    \left( f(s,y,v) - f(s,y,u) + f(t,y,u) - f(t,y,v) \right)
    \left( \nabla \Psi \right) \, \Phi
    \\
    & &
    \qquad\qquad\qquad\qquad
    \times \,
    \mathrm{sign}(u-v)
    \, \mathrm{d}x\,\mathrm{d}t \, \mathrm{d}y \, \mathrm{d}s\, ,
    \\
    L_x
    & = &
    \int_{\rpis} \int_{\reali^N} \int_{\rpis} \int_{\reali^N}
    \left(
      F(t,x,u) - F(t,y,v) - \div  f(t,x,v) + \div f(t,y,u)
    \right) \, \varphi
    \\
    & &
    \qquad\qquad\qquad\qquad
    \times \,
    \mathrm{sign}(u-v)
    \, \mathrm{d}x\,\mathrm{d}t \, \mathrm{d}y \, \mathrm{d}s\, ,
    \\
    L_t
    & = &
    \int_{\rpis} \int_{\reali^N} \int_{\rpis} \int_{\reali^N}
    \left(
      F(t,y,v) - F(s,y,v) - \div  f(t,y,u) + \div f(s,y,u)
    \right) \, \varphi
    \\
    & &
    \qquad\qquad\qquad\qquad
    \times \,
    \mathrm{sign}(u-v)
    \, \mathrm{d}x\,\mathrm{d}t \, \mathrm{d}y \, \mathrm{d}s \,.
  \end{eqnarray*}
  Then, the above inequality is rewritten as $I + J_x+J_{t} +
  L_x+L_{t} \geq 0$.  Choose $\Psi(t,x) = \nu(t)\, \mu(x)$ where, for
  $\pd, \gd > 0$, $\mu \in \Cc\infty (\rpic; \rpic)$
  satisfies~(\ref{eq:mu})--(\ref{eq:Mu}) and
  \begin{equation}
    \label{eq:nu}
    \nu(t)
    =
    \frac{1}{\pd} \, \nu_1 \left( \frac{t}{\pd} \right)
    \,,\quad
    \int_\reali \nu_1(s) \,\mathrm{d} s
    =
    1
    \,,\quad
    \nu_1
    \in
    \Cc\infty(\reali;\rpic)
    \,,\quad
    \mathrm{supp}(\nu_1)
    \subset
    \left]-1,0\right[
    \,.
  \end{equation}
  We have
  \begin{eqnarray*}
    I
    & \leq &
    I_1 + I_ 2 \qquad \mbox{ where}
    \\
    I_1
    & = &
    \int_{\rpis} \int_{\reali^N} \int_{\rpis} \int_{\reali^N}
    \modulo{u(t,x)-u(t,y)}
    \, \left( Y_\epsilon'(t) - Y_\epsilon'(t-T) \right) \, \psi \, \Psi
    \, \mathrm{d}x\,\mathrm{d}t \, \mathrm{d}y \, \mathrm{d}s\, ,
    \\
    I_2
    & = &
    \int_{\rpis} \int_{\reali^N} \int_{\rpis} \int_{\reali^N}
    \modulo{u(t,y)-u(s,y)}
    \, \left(Y_\epsilon'(t) + Y_\epsilon'(t-T) \right) \, \psi
    \, \Psi
    \, \mathrm{d}x\,\mathrm{d}t \, \mathrm{d}y \, \mathrm{d}s
  \end{eqnarray*}
  and we get
  \begin{eqnarray*}
    \limsup_{\epsilon\to 0} I_1
    & \leq &
    \int_{\reali^N} \int_{\norma{x-x_o}\leq R+MT_o+\theta }
    \modulo{u(0,x)-u(0,y)}
    \, \mu(x-y) \, \mathrm{d}x \, \mathrm{d}y
    \\
    & &
    -
    \int_{\reali^N} \int_{\norma{x-x_o}\leq R +M(T_o-T)}
    \modulo{u(T,x)-u(T,y)}
    \, \mu(x-y) \, \mathrm{d}x \, \mathrm{d}y\, ,
    \\
    \limsup_{\epsilon\to 0} I_2
    & \leq &
    2\, \sup_{t\in \{0,T\}, \atop s \in \left]t,t+\pd\right[ }
    \int_{\norma{y-x_o}\leq R+\gd+M(T_o-t)+\theta}
    \modulo{u(t,y)-u(s,y)} \, \mathrm{d}y \,.
  \end{eqnarray*}
  For $J_x$, we have that by~\textbf{(H1)}, $f \in \C2( \Omega ;
  \reali^N)$ and therefore
  \begin{eqnarray*}
    & &
    \norma{f(t,y,v)-f(t,y,u)+f(t,x,u)-f(t,x,v)} =
    \\
    & = &
    \norma{
      \int_{u}^{v} \int_0^1
      \nabla\pt_u   f\left(t,x(1-r)+ry, w\right) \cdot (y-x)
      \,\mathrm{d}r \, \mathrm{d}w
    }
    \\
    & \leq &
    \norma{\nabla\pt_u f }_{\L\infty(\Omega;\reali^{N\times
        N})}
    \norma{x-y} \, \modulo{u(s,y)-u(t,x)} \,.
  \end{eqnarray*}
  Then, using~(\ref{eq:mu3})
  \begin{eqnarray*}
    J_x
    & \leq &
    \norma{\nabla\pt_u f }_{\L\infty}
    \int_{\rpis} \int_{\reali^N} \int_{\rpis} \int_{\reali^N}
    \norma{x-y} \, \modulo{u(t,x)-u(s,y)}
    \norma{\nabla \Psi} \, \chi \, \psi
    \,\mathrm{d}x\,\mathrm{d}t\,\mathrm{d}y\,\mathrm{d}s
    \\
    & \leq &
    \norma{\nabla\pt_u f }_{\L\infty}
    \int_{\rpis} \int_{\reali^N} \int_{\rpis} \int_{\reali^N}
    \norma{x-y} 
    \left[\modulo{u(t,y)-u(s,y)}+\modulo{u(t,x)-u(t,y)}\right]
    \\
    & &
    \qquad \qquad \qquad \qquad \qquad \qquad
    \times
    \norma{\nabla \Psi} \, \chi \, \psi
    \,\mathrm{d}x\,\mathrm{d}t\,\mathrm{d}y\,\mathrm{d}s
    \\
    & \leq &
    N \,
    \norma{\nabla\pt_u f }_{\L\infty} (T+\epsilon)
    \sup_{t\in [0,T+\epsilon], \atop s \in \left]t, t+\pd\right[}
    \int_{\norma{y-x_o}\le R+\gd+M(T_o-t)+\theta }
    \modulo{u(t,y)-u(s,y)} \, \mathrm{d}y
    \\
    & &
    +
    \norma{\nabla\pt_u f }_{\L\infty}
    \int_0^{T+\epsilon}
    \int_{\reali^N}
    \int_{B(x_o, R +M(T_o-t)+\theta)} 
    \norma{x-y} \, \modulo{u(t,x)-u(t,y)} \, \norma{\nabla \mu}
    \,\mathrm{d}x\,\mathrm{d}y\,\mathrm{d}t\, ,
    \\
    J_{t}
    & \leq &
    \int_{\rpis} \int_{\reali^N} \int_{\rpis} \int_{\reali^N}
    \norma{
      \int_s^t \int_v^u \pt_{t} \pt_{u} f(\tau, y,w)
      \, \mathrm{d}w \, \mathrm{d}\tau
    }
    \norma{\nabla \Psi}\, \Phi
    \,\mathrm{d}x\,\mathrm{d}t\,\mathrm{d}y\,\mathrm{d}s
    \\
    & \leq &
    \pd \, \norma{\pt_{t} \pt_{u} f}_{\L\infty}
    \int_{\rpis} \int_{\reali^N} \int_{\rpis} \int_{\reali^N}
    \modulo{u(t,x)-u(s,y)}
    \, \norma{\nabla \Psi} \,\Phi
    \, \mathrm{d}x \, \mathrm{d}t \, \mathrm{d}y \, \mathrm{d}s \,.
  \end{eqnarray*}
  For $L_x$, we get
  \begin{eqnarray*}
    L_x
    & = &
    L_1 + L_2 \qquad \mbox{ where}
    \\
    L_1
    & = &
    \int_{\rpis} \int_{\reali^N} \! \int_{\rpis} \int_{\reali^N}
    \!
    \left[
      \int_v^u \left(\pt_u \div  f(t,x,w)+\pt_u F(t,y,w) \right) \mathrm{d}w
    \right]\!
    \phi \, \mathrm{sign}(u-v)
    \,\mathrm{d}x\,\mathrm{d}t\,\mathrm{d}y\,\mathrm{d}s ,
    \\
    L_2
    & = &
    \int_{\rpis} \int_{\reali^N} \! \int_{\rpis} \int_{\reali^N}
    \left[
      \int_0^1 \! \nabla ( F - \div f) \left(t,rx+(1-r)y,u\right)\cdot (x-y)
      \, \mathrm{d}r
    \right]
    \phi
    \\
    & &
    \qquad \qquad \qquad \qquad
    \times \mathrm{sign}(u-v)
    \,\mathrm{d}x\,\mathrm{d}t\,\mathrm{d}y\,\mathrm{d}s.
  \end{eqnarray*}
  Then, recalling~(\ref{eq:phi}), the definitions $\Psi = \nu \,\mu$,
  $\Phi = \chi \, \psi$, (\ref{eq:mu}), (\ref{eq:nu})
  and~(\ref{eq:chipsi}), we obtain
  \begin{eqnarray*}
    L_1
    &\leq &
    \left(
      N \norma{\nabla\pt_u f}_{\L\infty}
      +
      \norma{\pt_u F}_{\L\infty}
    \right)
    \\
    & &
    \times
    \bigg[
    (T+\epsilon)
    \sup_{ t\in [0,T+\epsilon], \atop s \in \left]t, t+\pd\right[}
    \int_{\norma{y-x_o}\leq R+\gd+M(T_o-t)+\theta}
    \modulo{u(t,y)-u(s,y)} \, \mathrm{d}y
    \\
    & &
    \qquad
    +
    \int_0^{T+\epsilon} \int_{\reali^N}
    \int_{\norma{x-x_o}\leq R+M(T_o-t)+\theta}
    \modulo{u(t,x)-u(t,y)} \, \mu(x-y)
    \,\mathrm{d}x\,\mathrm{d}y\,\mathrm{d}t
    \bigg]\, ,
    \\
    L_2
    & \leq &
    \int_{\rpis} \! \int_{\reali^N} \! \int_{\rpis} \! \int_{\reali^N} \!\!
    \int_0^1
    \norma{\nabla( F - \div f) \left(t,y+r(x-y),u\right)}
    \norma{x-y} 
    \chi \, \psi \, \mu \, \nu
    \mathrm{d}r
    \,\mathrm{d}x\,\mathrm{d}t\,\mathrm{d}y\,\mathrm{d}s
    \\
    & \leq &
    \left(
      \int_0^{T+\varepsilon} \int_{\reali^N}
      \norma{\nabla (F - \div f) (t, y,\cdot)}_{\L\infty}
      \mathrm{d}y\, \mathrm{d}t
    \right)
    \int_{\reali^N} \norma{x} \, \mu(x) \, \mathrm{d}x
    \\
    & = &
    \gd \, M_1
    \;
    \int_0^{T+\varepsilon} \int_{\reali^N}
    \norma{\nabla (F - \div f) (t, y,\cdot)}_{\L\infty}\, \mathrm{d}y\, \mathrm{d}t
  \end{eqnarray*}
  where
  \begin{equation}
    \label{eq:M1}
    M_1
    =
    \int_{\reali^N} \norma{x} \,
    \mu_1\left(\norma{x}\right) \, \mathrm{d}x \,.
  \end{equation}
  Concerning the latter term $L_t$
  \begin{eqnarray*}
    L_{t}
    & \leq &
    \pd \, \omega_N \, (R+M T_o)^N \, (T+\epsilon)
    \left(
      \norma{\pt_t \div f}_{\L\infty}
      +
      \norma{\pt_t F}_{\L\infty}
    \right) \,.
  \end{eqnarray*}
  Letting $\epsilon, \pd, \theta \to 0$ we get
  \begin{eqnarray*}
    \limsup_{\epsilon, \pd, \theta \to 0} I_1
    & = &
    \int_{\reali^N} \int_{\norma{x-x_o}\leq R+MT_o}
    \modulo{u(0,x)-u(0,y)}
    \, \mu(x-y) \, \mathrm{d}x \, \mathrm{d}y
    \\
    & &
    -
    \int_{\reali^N} \int_{\norma{x-x_o}\leq R +M(T_o-T)}
    \modulo{u(T,x)-u(T,y)}
    \, \mu(x-y) \, \mathrm{d}x \, \mathrm{d}y\, ,
    \\
    \limsup_{\epsilon, \pd, \theta \to 0} I_2
    & = &
    0\, ,
    \\
    \limsup_{\epsilon, \pd, \theta \to 0} J_x
    & \leq &
    \norma{\nabla\pt_u f }_{\L\infty}
    \int_0^{T}
    \int_{\reali^N}
    \int_{B(x_o, R +M(T_o-t))}
    \norma{x-y} \, \modulo{u(t,x)-u(t,y)}
    \\
    & &
    \qquad \qquad
    \times
    \norma{\nabla \mu(x - y)}
    \,\mathrm{d}x\,\mathrm{d}y\,\mathrm{d}t\, ,
    \\
    \limsup_{\epsilon, \pd, \theta \to 0} J_{t}
    & = &0\, ,
    \\
    \limsup_{\epsilon, \pd, \theta \to 0} L_1
    & \leq &
    \left(
      N \norma{\nabla\pt_u f}_{\L\infty}
      +
      \norma{\pt_u F}_{\L\infty}
    \right)
    \\
    & &
    \qquad
    \times
    \int_0^{T} \int_{\reali^N}
    \int_{\norma{x-x_o}\leq R+M(T_o-t)}
    \modulo{u(t,x)-u(t,y)} \, \mu(x-y)
    \,\mathrm{d}x\,\mathrm{d}y\,\mathrm{d}t\, ,
    \\
    \limsup_{\epsilon, \pd, \theta \to 0} L_2
    & \leq &
    \gd  M_1
    \int_0^{T} \int_{\reali^N}
    \norma{\nabla(F - \div f) (t,y,\cdot)}_{\L\infty}
    \, \mathrm{d}y\,\mathrm{d}t\,  ,
    \\
    \limsup_{\epsilon, \pd, \theta \to 0} L_{t}
    & = &
    0 \,.
  \end{eqnarray*}
  Collating all the obtained results and using the equality
  $\norma{\nabla \mu (x)} = - \frac{1}{ \gd^{N+1}} \mu_1'\left(
    \frac{\norma{x}}{\gd} \right)$
  \begin{equation}
    \label{inegalite}
    \begin{array}{rcl}
      & &
      \displaystyle
      \int_{\reali^N} \int_{\norma{x-x_o}\leq R +M(T_o-T)}
      \modulo{u(T,x)-u(T,y)}
      \, \frac{1}{\gd^N}
      \mu_1\left(\frac{\norma{x-y}}{\gd}\right)
      \, \mathrm{d}x \, \mathrm{d}y
      \\[10pt]
      & \leq &
      \displaystyle
      \int_{\reali^N} \int_{\norma{x-x_o}\leq R +M(T_o-T)}
      \modulo{u(0,x)-u(0,y)}
      \, \frac{1}{\gd^N}
      \mu_1\left(\frac{\norma{x-y}}{\gd}\right)
      \, \mathrm{d}x \, \mathrm{d}y
      \\[10pt]
      & &
      \displaystyle
      -
      \norma{\nabla\pt_u f }_{\L\infty}
      \int_0^{T} \int_{\reali^N} \int_{\norma{x-x_o}\leq R +M(T_o-t)} \!\!\!
      \modulo{u(t,x)-u(t,y)}
      \\[10pt]
      & &
      \displaystyle
      \qquad \qquad \qquad
      \times
      \frac{1}{\gd^{N+1}} \, \mu_1'\left( \frac{\norma{x-y}}{\gd} \right)
      \, \norma{x-y}
      \,\mathrm{d}x\,\mathrm{d}y\,\mathrm{d}t
      \\[10pt]
      & &
      \displaystyle
      +
      \left(
        N \norma{\nabla\pt_u f}_{\L\infty}
        +
        \norma{\pt_u F}_{\L\infty}
      \right) \!
      \int_0^{T} \! \int_{\reali^N} \! \int_{\norma{x-x_o}\leq R
        +M(T_o-t)} \!
      \modulo{u(t,x)-u(t,y)}
      \\[10pt]
      & &
      \displaystyle
      \qquad \qquad \qquad
      \times
      \frac{1}{\gd^N}
      \mu_1\left(\frac{\norma{x-y}}{\gd}\right)
      \,\mathrm{d}x\,\mathrm{d}y\,\mathrm{d}t
      \\[10pt]
      & &
      \displaystyle
      +
      \gd  M_1
      \int_0^{T}\int_{\reali^N} 
      \norma{\nabla(F - \div f)(t, y,\cdot)}_{\L\infty} \,
      \mathrm{d}y\, \mathrm{d}t \,.
    \end{array}
  \end{equation}

  If $\norma{\nabla\pt_u f }_{\L\infty} =\norma{\pt_u F}_{\L\infty}=
  0$ and under the present assumption that $u_o \in \C1
  (\reali^N;\reali)$, using Proposition~\ref{prop:normtv},
  (\ref{eq:C1}) and~(\ref{eq:M1}), we directly obtain that
  \begin{equation}
    \label{eq:CK}
    \tv(u(T))
    \leq
    \tv(u_o)
    +
    \frac{M_1}{C_1}  \,
    \int_0^{T} \int_{\reali^N} 
    \norma{\nabla (F - \div f)(t, y,\cdot)}_{\L\infty}
    \mathrm{d}y\, \mathrm{d}t \,.
  \end{equation}
  The same procedure at the end of this proof allows to
  extend~(\ref{eq:CK}) to more general initial data, providing an
  estimate of $\tv \left( u(t) \right)$ in the situation studied
  in~\cite{bouchutperthame}.

  Now, it remains to treat the case $\norma{\nabla \pt_u f}_{\L\infty}
  \neq 0$. A direct use of Gronwall type inequalities is apparently
  impossible, due to the term with $\nabla \mu$.  However, introduce
  the function
  \begin{displaymath}
    \mathcal{F}(T,\gd)
    =
    \int_0^T \int_{\reali^N} \int_{\norma{x-x_o} \leq R+M(T_o-t)}
    \modulo{u(t,x)-u(t,x-z)}
    \, \frac{1}{\gd^N}
    \, \mu_1\left(\frac{\norma{z}}{\gd}\right)
    \, \mathrm{d}x \, \mathrm{d}z \, \mathrm{d}t
  \end{displaymath}
  so that
  \begin{eqnarray*}
    \partial_\gd \mathcal{F}
    & = &
    - \frac{N}{\gd} \mathcal{F}
    \\
    & &
    - \frac{1}{\gd}
    \int_0^{T} \int_{\reali^N} \int_{\norma{x-x_o}\le R + M(T_o-t)}
    \modulo{u(t,x)-u(t,x-z)}
    \, \frac{\mu_1'\left( \norma{z} /\gd \right)}{\gd^{N+1}} \,
    \, \norma{z}
    \, \mathrm{d}x \, \mathrm{d}z \, \mathrm{d}t \,.
  \end{eqnarray*}
  Denote $\displaystyle C(T) = M_1 \int_0^{T} \int_{\reali^N}
  \norma{\nabla ( F - \div f) (t, y,\cdot)}_{\L\infty} \mathrm{d}y\,
  \mathrm{d}t$ and integrate~(\ref{inegalite}) on $[0, T']$ with
  respect to $T$ for $T' \leq T_o$. It results
  \begin{eqnarray*}
    \frac{1}{\gd}\mathcal{F}(T',\gd)
    & \leq &
    \frac{T'}{\gd} \int_{\reali^N} \int_{\norma{x-x_o}\leq R + MT_o}
    \modulo{u(0,x)-u(0,y)} \, \mu(x-y) \, \mathrm{d}x \, \mathrm{d}y
    \\
    & &
    +
    T' \, \norma{\nabla\pt_u f }_{\L\infty} \, \partial_\gd \mathcal{F}(T',\gd)
    +
    \frac{T'}{\gd} \,
    \left(
      2N \norma{\nabla\pt_u f}_{\L\infty} +\norma{\pt_u F}_{\L\infty}
    \right)
    \, \mathcal{F}(T',\gd)
    \\
    & &
    +
    T' \, C(T') \,.
  \end{eqnarray*}
  Denote $\alpha = \left( 2 N \norma{\nabla\pt_u
      f}_{\L\infty}+\norma{\pt_u F}_{\L\infty} - \frac{1}{T'} \right)
  \left(\norma{\nabla\pt_u f}_{\L\infty} \right)^{-1}$, so that
  $\lim_{T' \to 0} \alpha = -\infty$. The previous inequality reads,
  using~(\ref{eq:afp}) for $u_o$,
  \begin{eqnarray*}
    \partial_\gd \mathcal{F}(T',\gd)
    +
    \alpha \,
    \frac{\mathcal{F}(T',\gd)}{\gd}
    & \geq &
    - \left( M_1\tv(u_o) +  C(T') \right)
    \frac{1}{\norma{\nabla\pt_u f}_{\L\infty}} \,,
    \\
    \partial_\gd
    \left( \gd^\alpha \, \mathcal{F}(T',\gd) \right)
    & \geq &
    - \gd^\alpha \left( M_1\tv(u_o) +  C(T') \right)
    \frac{1}{\norma{\nabla\pt_u f }_{\L\infty}} \,.
  \end{eqnarray*}
  Finally, if $T'$ is such that $\alpha < -1$, then we integrate in
  $\gd$ on $\left[ \gd, +\infty \right[$ and we get
  \begin{equation}
    \label{eq:FDelta}
    \frac{1}{\gd} \mathcal{F}(T',\gd)
    \leq
    \frac{1}{-\alpha-1} \left( M_1\tv{(u_o)} +  C(T') \right)
    \frac{1}{\norma{\nabla\pt_u f}_{\L\infty}} \,.
  \end{equation}
  Furthermore, by~(\ref{eq:mu}) and~(\ref{eq:Mu}), there exists a
  constant $K > 0$ such that for all $z \in \reali^N$
  \begin{equation}
    \label{eq:BoundMu}
    -\mu_1'(\norma{z})
    \leq
    K \mu_1\left(\frac{\norma{z}}{2}\right) \,.
  \end{equation}
  Divide both sides in~(\ref{inegalite}) by $\gd$, rewrite them
  using~(\ref{eq:FDelta}), (\ref{eq:BoundMu}), apply~(\ref{eq:afp})
  and obtain
  \begin{eqnarray*}
    & &
    \frac{1}{\gd}
    \int_{\reali^N} \int_{\norma{x-x_o}\leq R +M(T_o-T)}
    \modulo{u(T,x) - u(T,y)}
    \, \frac{1}{\gd^N}
    \mu_1\left(\frac{\norma{x-y}}{\gd}\right)
    \, \mathrm{d}x \, \mathrm{d}y
    \\
    & \leq &
    M_1 \tv(u_o)
    +
    \frac{\mathcal{F}(T,2\gd)}{2\gd} \, 2^{N+2} \, K \,
    \norma{\nabla\pt_u f}_{\L\infty}
    +
    \frac{ \mathcal{F}(T,\gd)}{\gd}
    \left(
      2 N \norma{\nabla\pt_u f}_{\L\infty}+\norma{\pt_u F}_{\L\infty}
    \right)
    \\
    & &+
    M_1
    \int_0^{T} \int_{\reali^N} 
    \norma{\nabla(F - \div f)(t, y,\cdot)}_{\L\infty}
    \mathrm{d}y\, \mathrm{d}t\,.
  \end{eqnarray*}
  An application of~(\ref{eq:FDelta}) yields an estimate of the type
  \begin{equation}
    \label{ineq1}
    \frac{1}{\gd} \int_{\reali^N} \int_{B(x_o,R+M(T_o-T))}
    \modulo{u(T,x)-u(T,x-z)} \, \mu(z)
    \, \mathrm{d}x \, \mathrm{d}z
    \leq \check C \,,
  \end{equation}
  the positive constant $\check C$ being independent from $R$ and
  $\gd$. Applying Proposition~\ref{prop:normtv} we obtain that
  $u(t)\in \BV(\reali^N;\reali)$ for $t \in \left[ 0, 2T_1\right[$,
  where
  \begin{equation}
    \label{eq:T1}
    T_1
    =
    \frac{1}{2\left( (1+2N) \norma{\nabla\pt_u f}_{\L\infty}
        + \norma{\pt_u F}_{\L\infty}\right)} \,.
  \end{equation}

  \bigskip

  The next step is to obtain a general estimate of the $\tv$ norm. The
  starting point is~(\ref{inegalite}). Recall the
  definitions~(\ref{eq:M1}) of $M_1$ and~(\ref{eq:T1}) of
  $T_1$. Moreover, by~(\ref{eq:mu4}),
  \begin{displaymath}
    \int_{\reali^N} \norma{z}^2 \, \mu_1'(\norma{z}) \, \mathrm{d}z
    =
    - (N+1) \, M_1 \,.
  \end{displaymath}
  Divide both terms in~(\ref{inegalite}) by $\gd$,
  apply~(\ref{eq:TV1}) on the first term in the right hand side,
  apply~(\ref{eq:afp}) on the second and third terms and obtain for
  all $T \in [0,T_1]$ with $T_1 < T_o$
  \begin{eqnarray*}
    \tv \left( u(T) \right)
    & \leq &
    \tv(u_o)
    +
    \left(
      (2N+1) \norma{\nabla\pt_u f}_{\L\infty}
      +
      \norma{\pt_u F}_{\L\infty}
    \right)
    \frac{M_1}{C_1}  \int_0^T \tv \left( u(t) \right) \, \mathrm{d}t
    \\
    & &
    +
    \frac{M_1}{C_1}  \int_0^{T} \int_{\reali^N}
    \norma{\nabla(F - \div f)(t,x,\cdot)}_{\L\infty} 
    \mathrm{d}x \, \mathrm{d}t\,.
  \end{eqnarray*}
  An application of Gronwall Lemma shows that $\tv \left( u(t)
  \right)$ is bounded on $[0,T_1]$. Indeed,
  \begin{equation}
    \label{eq:TV}
    \tv \left( u(t) \right)
    \leq
    e^{\kappa_o t} \, \tv(u_o)
    +
    \frac{M_1}{C_1} \int_0^T e^{\kappa_o(T-t)}
    \int_{\reali^N} 
    \norma{\nabla(F-\div f)(t,x,\cdot)}_{\L\infty} 
    \, \mathrm{d}x\, \mathrm{d}t
  \end{equation}
  for $t \in [0,T_1]$, $M_1, C_1$ as in~(\ref{eq:M1}), (\ref{eq:C1})
  and $\kappa_o = [ (2N+1) \norma{\nabla\pt_u f}_{\L\infty} +
  \norma{\pt_u F}_{\L\infty} ] M_1/C_1$.

  We now relax the assumption on the regularity of $u_o$. Indeed, let
  $u_o \in \BV(\reali^N;\reali)$ and choose a sequence $u_o^n$ of
  $\C1(\reali^N;\reali)$ functions such that $\tv(u_o^n) \to
  \tv(u_o)$, as in Theorem~\ref{thm:AFP}. Then, by
  Theorem~\ref{teo:kruzkov}, the solutions $u^n$ to~(\ref{eq:probv})
  with initial datum $u^n_o$ satisfy
  \begin{displaymath}
    \lim_{n\to +\infty} u^n(t) = u(t) \mbox{ in } \Lloc1
    \quad \mbox{ and } \quad
    \tv \left( u(t) \right) 
    \leq 
    \liminf_{n\to +\infty} \tv \left( u^n(t) \right) ,
  \end{displaymath}
  where we used also the lower semicontinuity of the total
  variation. Note that~(\ref{eq:TV}), as well as the relations above,
  holds for all $t \in [0,T_1]$, $T_1$ being independent from the
  initial datum.  Therefore, the bound~(\ref{eq:TV}) holds for all
  $\BV$ initial data.

  Remark that the bound~(\ref{eq:TV}) is additive in time, in the
  sense that applying it iteratively for times $T_1$ and $t$
  yields~(\ref{eq:TV}) for time $T_1+t$:
  \begin{eqnarray*}
    & &
    \tv \left( u(T_1+t) \right) \!\!
    \\
    & \leq &
    e^{\kappa_o t} \, \tv\left(u(T_1)\right)
    +
    \frac{M_1}{C_1} \int_{T_1}^{T_1+t}e^{\kappa_o(t-s)}\int_{\reali^N} 
    \norma{\nabla(F-\div f)(s, x,\cdot)}_{\L\infty} \,
    \mathrm{d}x\, \mathrm{d}s
    \\
    & \leq &
    e^{\kappa_o t}
    \left[
      e^{\kappa_o T_1} \, \tv(u_o)
      +
      \frac{M_1}{C_1} \int_{0}^{T_1}e^{\kappa_o(T_1-s)} \int_{\reali^N} 
      \norma{\nabla(F - \div f)(s, x,\cdot)}_{\L\infty} \,
      \mathrm{d}x\, \mathrm{d}s
    \right]
    \\
    & &
    +
    \frac{M_1}{C_1} \int_{T_1}^{T_1+t}e^{\kappa_o(T_1+t-s)}\int_{\reali^N} 
    \norma{\nabla(F-\div f)(s, x,\cdot)}_{\L\infty} \,
    \mathrm{d}x\, \mathrm{d}s
    \\
    & = &
    e^{\kappa_o (T_1+t)} \, \tv(u_o)
    +
    \frac{M_1}{C_1}\int_{0}^{T_1+t}e^{\kappa_o(T_1+t-s)} \int_{\reali^N} 
    \norma{\nabla(F-\div f)(s, x,\cdot)}_{\L\infty} \,
    \mathrm{d}x\, \mathrm{d}s\, .
  \end{eqnarray*}
  The bound~(\ref{eq:TV}) can then be applied iteratively, thanks to
  the fact that $T_1$ is independent from the initial datum. An
  iteration argument allows to prove~(\ref{result}) for $t \in [0,
  T_o]$. The final bound~(\ref{result}) then follows by the
  arbitrariness of $T_o$, thanks to~(\ref{eq:W}).
\end{proofof}

\Section{Proof of
  Theorem~\ref{teo:estimates}.}\label{sec:proofcomparison}

The following proof relies on developing the techniques used in the
proof of Theorem~\ref{teo:tv}.

\begin{proofof}{Theorem~\ref{teo:estimates}}
  Let $\Phi\in \Cc\infty( \rpis \times \reali^N; \rpic)$, $\Psi \in
  \Cc\infty (\reali\times \reali^N; \rpic)$ and set
  $\varphi(t,x,s,y)=\Phi(t,x) \Psi(t-s,x-y)$ as in~(\ref{eq:phi}).

  By Definition~\ref{def:sol}, we have $\forall l\in \reali$, $\forall
  (t,x) \in \rpis \times \reali^N$
  \begin{equation}
    \label{eq:u}
    \begin{array}{r}
      \displaystyle
      \!\!\int_{\rpis} \!\! \int_{\reali^N} \!\!
      \left[
        (u-l) \, \pt_s \phi
        +
        \left( f(s,y,u) - f(s,y,l) \right) \cdot \nabla_y \phi
        +
        \left(F(s,y,u)-\div f(s,y,l)\right) \phi
      \right]
      \\
      \times \mathrm{sign}(u-l) \,
      \mathrm{d}y \, \mathrm{d}s
      \geq
      0
    \end{array}
  \end{equation}
  and $\forall k\in \reali$, $\forall (s,y)\in \rpis \times \reali^N$
  \begin{equation}
    \label{eq:v}
    \begin{array}{r}
      \displaystyle
      \!\!\int_{\rpis} \!\! \int_{\reali^N} \!\!
      \left[
        (v-k) \, \pt_t \phi
        +
        \left( g(t,x,v) - g(t,x,k) \right) \cdot \nabla_x \phi
        +
        \left(G(t,x,v) - \div g(t,x,k)\right)\phi
      \right]
      \\
      \times \mathrm{sign}(v-k) \,
      \mathrm{d}x \, \mathrm{d}t
      \geq
      0 .
    \end{array}
  \end{equation}
  Choose $k=u(s,y)$ in~(\ref{eq:v}) and integrate with respect to
  $(s,y)$. Analogously, take $l=v(t,x)$ in~(\ref{eq:u}) and integrate
  with respect to $(t,x)$. By summing the obtained equations, we get,
  denoting $u=u(s,y)$ and $v=v(t,x)$:
  \begin{equation}
    \label{eq:sum}
    \!\!\!
    \begin{array}{rl@{\;}c@{\;}l}
      \displaystyle 
      \int_{\rpis} \int_{\reali^N} \int_{\rpis} \int_{\reali^N} 
      &
      \displaystyle
      \bigg[
      (u-v) \Psi\dt \Phi 
      +
      \left(g(t,x,u)-g(t,x,v)\right) \cdot \left(\nabla\Phi\right) \Psi
      \\
      &
      +
      \left( g(t,x,u) - g(t,x,v) - f(s,y,u) + f(s,y,v) \right)
      \cdot
      (\nabla\Psi) \Phi
      \\
      &
      + 
      \left( 
        F(s,y,u) - G(t,x,v) + \div  g(t,x,u) -\div f(s,y,v)
      \right)
      \varphi 
      \bigg]
      \\
      &
      \times \mathrm{sign}(u-v)
      \, \mathrm{d}x\,\mathrm{d}t\,\mathrm{d}y\,\mathrm{d}s 
      & \geq & 
      0 \,.
    \end{array}
  \end{equation}
  Introduce a family of functions $\{Y_\vartheta\}_{\vartheta>0}$ as
  in~(\ref{eq:Y}). Let $M = \norma{\pt_u
    g}_{\L\infty(\Omega;\reali^{N})}$ and define $\chi, \psi$ as
  in~(\ref{eq:chipsi}), for $\epsilon, \theta, T_o, R > 0$, $x_o \in
  \reali^N$, (see also Figure~\ref{fig:chipsi}). Remind that with
  these choices, equalities~(\ref{eq:Derivatives}) still hold. Note
  that here the definition of the test function $\phi$ is essentially
  the same as in the preceding proof; the only change is the
  definition of the constant $M$, that is now defined with reference
  to $g$. We also introduce as above the function $\displaystyle
  B(t,x,u,v) = M \modulo{u-v} + \mathrm{sign}(u-v) \left( g(t,x,u) -
    g(t,x,v) \right) \cdot \frac{x-x_o}{\norma{x-x_o}}$ that is
  positive for all $(t,x,u,v) \in \Omega \times \reali^N$, and we
  have:
  \begin{eqnarray*}
    & & 
    \int_{\rpis} \! \int_{\reali^N} \! \int_{\rpis} \! \int_{\reali^N}
    \left[ 
      (u-v) \dt \Phi 
      + 
      \left( g(t,x,u) - g(t,x,v) \right) \cdot \nabla \Phi
    \right]
    \Psi
    \, \mathrm{sign}(u-v) \, \mathrm{d}x \, \mathrm{d}t \, \mathrm{d}y
    \, \mathrm{d}s 
    \\
    & \leq &
    \int_{\rpis} \! \int_{\reali^N} \! \int_{\rpis} \! \int_{\reali^N}
    \left[
      \modulo{u-v} \chi' \psi
      -
      B(t,x,u,v) \chi Y_\theta'
    \right]
    \Psi
    \, \mathrm{d}x \, \mathrm{d}t \, \mathrm{d}y \, \mathrm{d}s
    \\
    & \leq &
    \int_{\rpis} \! \int_{\reali^N} \! \int_{\rpis} \! \int_{\reali^N}
    \modulo{u-v} \, \chi' \, \psi \, \Psi
    \, \mathrm{d}x \, \mathrm{d}t \, \mathrm{d}y \, \mathrm{d}s \,.
  \end{eqnarray*}
  Thanks to the above estimate and~(\ref{eq:sum}), it results
  \begin{displaymath}
    \begin{array}{llcr}
      \displaystyle
      \int_{\rpis} \int_{\reali^N} \int_{\rpis} \int_{\reali^N} 
      &
      \bigg[(u-v) \chi'\psi\Psi 
      & &
      \\
      &
      \displaystyle
      +
      \left( g(t,x,u) - g(t,x,v) - f(s,y,u) + f(s,y,v) \right)
      \cdot
      (\nabla\Psi) \Phi 
      & &
      \\
      &
      \displaystyle 
      + 
      \left( 
        F(s,y,u) - G(t,x,v) + \div  g(t,x,u) - \div f(s,y,v)
      \right)
      \varphi
      \bigg]
      & &
      \\
      &
      \displaystyle
      \times \mathrm{sign}(u-v)
      \,\mathrm{d}x \,\mathrm{d}t\,\mathrm{d}y\,\mathrm{d}s 
      & \geq & 
      0 \,,
    \end{array}
  \end{displaymath}
  i.e.~$I + J_x + J_t + K + L_x + L_t \geq 0$, where
  \begin{eqnarray}
    \label{eq:I}
    I
    & = &
    \int_{\rpis} \int_{\reali^N} \int_{\rpis} \int_{\reali^N} 
    \modulo{u-v} \chi'\psi\Psi 
    \,\mathrm{d}x\,\mathrm{d}t\,\mathrm{d}y\,\mathrm{d}s\, , 
    \\
    \label{eq:Jx}
    J_x
    & = & 
    \int_{\rpis} \int_{\reali^N} \int_{\rpis} \int_{\reali^N}
    \left( f(t,y,v) - f(t,y,u) + f(t,x,u) - f(t,x,v) \right)
    \cdot (\nabla \Psi) \Phi 
    \\
    \nonumber
    & & 
    \qquad \qquad \qquad \qquad
    \times \mathrm{sign}(u-v)
    \, \mathrm{d}x \, \mathrm{d}t \, \mathrm{d}y \, \mathrm{d}s \, ,
    \\
    \nonumber
    J_t
    & = &
    \int_{\rpis} \int_{\reali^N} \int_{\rpis} \int_{\reali^N}
    \left( f(s,y,v) - f(s,y,u) + f(t,y,u) - f(t,y,v) \right)
    \cdot (\nabla \Psi) \Phi
    \\
    \nonumber
    & &
    \qquad \qquad \qquad \qquad
    \times \mathrm{sign}(u-v)
    \,\mathrm{d}x\,\mathrm{d}t\,\mathrm{d}y\,\mathrm{d}s\, ,
    \\
    \label{eq:K}
    K
    & = &
    \int_{\rpis} \int_{\reali^N} \int_{\rpis} \int_{\reali^N}
    \left( (g-f)(t,x,u) - (g-f)(t,x,v) \right)
    \cdot (\nabla\Psi)  \Phi
    \\
    \nonumber
    & &
    \qquad \qquad \qquad \qquad
    \times 
    \mathrm{sign}(u-v)
    \,\mathrm{d}x\,\mathrm{d}t\,\mathrm{d}y\,\mathrm{d}s\,
    ,
    \\
    \label{eq:Lx}
    L_x
    & = &
    \int_{\rpis} \int_{\reali^N} \int_{\rpis} \int_{\reali^N}
    \left( 
      F(t,y,u) - G(t,x,v) + \div g(t,x,u) - \div f(t,y,v)
    \right)
    \varphi 
    \\
    \nonumber
    & &
    \qquad \qquad \qquad \qquad
    \times \mathrm{sign}(u-v)
    \,\mathrm{d}x\,\mathrm{d}t\,\mathrm{d}y\,\mathrm{d}s\, ,
    \\
    \nonumber
    L_t
    & = &
    \int_{\rpis} \int_{\reali^N} \int_{\rpis} \int_{\reali^N}
    \left( 
      F(s,y,u) - F(t,y,u) + \div  f(t,y,v) - \div f(s,y,v)
    \right)
    \varphi 
    \\
    \nonumber
    & &
    \qquad \qquad \qquad \qquad
    \times \mathrm{sign}(u-v) 
    \,\mathrm{d}x\,\mathrm{d}t\,\mathrm{d}y\,\mathrm{d}s
    \,.
  \end{eqnarray}
  Now, we choose $\Psi(t,x) = \nu(t) \, \mu(x)$ as
  in~(\ref{eq:nu}),~(\ref{eq:mu}),~(\ref{eq:Mu}).  Thanks to
  Lemma~\ref{lem:estI}, Lemma~\ref{lem:estJ} and Lemma~\ref{lem:estL}
  we obtain
  \begin{eqnarray}
    \label{eq:estI}
    \limsup_{\epsilon, \pd,\gd \to 0} I 
    & \leq & 
    \int_{\norma{x-x_o} \leq R + MT_o+\theta} \modulo{u(0,x)-v(0,x)} \,\mathrm{d}x
    \\
    \nonumber
    & &
    -
    \int_{\norma{x-x_o}\leq R +M(T_o-T)} \modulo{u(T,x)-v(T,x)} \, \mathrm{d}x\, ,
    \\
    \label{eq:estJ} 
    \limsup_{\epsilon, \pd,\gd \to 0} J_x 
    & \leq &
    N \norma{\nabla\pt_u f }_{\L\infty} 
    \int_0^{T} \int_{B(x_o, R+M(T_o-t)+\theta)} 
    \modulo{v(t,x)-u(t,x)} \, \mathrm{d}x\,\mathrm{d}t\, ,
    \\
    \nonumber
    \limsup_{\epsilon, \pd,\gd\to 0} L_x 
    & \leq &
    \int_0^T \int_{B(x_o,R+M(T_o-t)+\theta)} 
    \norma{\left((F-G)-\div (f-g)\right)(t, y,\cdot)}_{\L{\infty}}
    \, \mathrm{d}y\, \mathrm{d}t
    \\
    \label{eq:estL}
    & &
    +
    \left(
      N \norma{\nabla\pt_u f }_{\L{\infty}} 
      +
      \norma{\pt_u F }_{\L{\infty}} 
      +
      \norma{\pt_u(F-G)}_{\L\infty}
    \right)
    \\
    \nonumber
    & &
    \quad 
    \times \int_0^{T} \int_{B(x_o,  R+M(T_o-t)+\theta)} 
    \modulo{v(t,x)-u(t,x)} \, \mathrm{d}x\,\mathrm{d}t\, .
  \end{eqnarray}
  Besides, we find that:
  \begin{eqnarray*}
    \modulo{J_t}  
    & \leq & 
    \pd \, \norma{\pt_{t} \pt_{u} f}_{\L\infty}
    \int_{\rpis} \int_{\reali^N} \int_{\rpis} \int_{\reali^N}
    \modulo{u(t,x)-u(s,y)}
    \, \norma{\nabla \Psi} \, \Phi
    \, \mathrm{d}x \, \mathrm{d}t \, \mathrm{d}y \, \mathrm{d}s \, ,
    \\
    \modulo{L_t}  
    & \leq &
    \pd \, \omega_N \, (R+M T_o)^N \, (T + \epsilon)
    \left(
      \norma{\pt_t \div f}_{\L\infty}
      +
      \norma{\pt_t F}_{\L\infty}
    \right),
  \end{eqnarray*}
  so that
  \begin{equation}
    \label{eq:uffa}
    \limsup_{\pd\to 0} \modulo{J_t} = \limsup_{\pd\to 0}
    \modulo{L_t} = 0 \,.
  \end{equation}

  In order to estimate $K$ as given in~(\ref{eq:K}), we introduce a
  regularisation of the $y$ dependent functions. In fact, let
  $\rho_\alpha(z) = \frac{1}{\alpha} \rho
  \left(\frac{z}{\alpha}\right)$ and $\sigma_\beta(y) =
  \frac{1}{\beta^N} \sigma \left(\frac{y}{\beta}\right)$, where $\rho
  \in \Cc\infty( \reali; \rpic)$ and $\sigma \in \Cc{\infty}(
  \reali^N; \rpic)$ are such that $\norma{\rho}_{\L1(\reali;\reali)} =
  \norma{\sigma}_{\L1(\reali^N;\reali)} = 1$ and $\mathrm{supp} (\rho)
  \subseteq \left]-1,1\right[$, $\mathrm{supp} (\sigma) \subseteq
  B(0,1)$. Then, introduce
  \begin{displaymath}
    \begin{array}{rcl@{\qquad}rcl}
      P(w)
      & = & 
      (g-f)(t,x,w)\,,
      &
      s_\alpha
      & = &
      \mathrm{sign} \ast_u \rho_\alpha \,,
      \\
      \Upsilon_\alpha^i (w)
      & = &
      s_\alpha(w-v) \, \left( P_i(w) - P_i(v) \right) ,
      &
      u_\beta 
      & = & 
      \sigma_\beta \ast_y u \,,
      \\
      \Upsilon^i (w)
      & = &
      \mathrm{sign} (w-v) \, \left( P_i(w) - P_i(v) \right) ,
    \end{array}
  \end{displaymath}
  so that we obtain
  \begin{eqnarray*}
    & &
    \langle 
    \Upsilon_\alpha^i(u_{\beta}) - \Upsilon_\alpha^i(u), \, \pt_{y_i} \varphi 
    \rangle 
    \\
    & = &
    \int_{\reali^N} \int_{\reali} \mathrm{sign}(w)
    \left( 
      \rho_\alpha(u_\beta-v-w) \, P_i(u_\beta) -  \rho_\alpha(u-v-w) \, P_i(u) 
    \right)
    \pt_{y_i} \varphi 
    \, \mathrm{d}w \, \mathrm{d}y
    \\
    & & 
    -
    \int_{\reali^N} \int_{\reali} \mathrm{sign}(w)
    \left( 
      \rho_\alpha (u_\beta-v-w) - \rho_\alpha (u-v-w)
    \right)
    \, P_i(v) \, \pt_{y_i} \varphi 
    \, \mathrm{d}w \, \mathrm{d}y
    \\
    & = &
    \int_{\reali^N} \int_{\reali} \int_{u}^{u_\beta} 
    \mathrm{sign}(w) \, \rho_\alpha '(U-v-w)
    \left( P_i(U) - P_i(v) \right) \, \pt_{y_i} \varphi
    \, \mathrm{d}U \, \mathrm{d}w \, \mathrm{d}y
    \\
    & &
    +
    \int_{\reali^N} \int_{\reali} \int_{u}^{u_\beta} 
    \mathrm{sign}(w) \,  \rho_\alpha (U-v-w) \, P_i'(U) \, \pt_{y_i} \varphi
    \, \mathrm{d}U \, \mathrm{d}w \, \mathrm{d}y \,.
  \end{eqnarray*}
  Now, we use the relation $\pt_u s_\alpha(u) = \frac{2}{\alpha} \rho
  \left( \frac{u}{\alpha} \right)$ to obtain
  \begin{eqnarray*}
    & &
    \modulo{
      \langle 
      \Upsilon_\alpha^i(u_{\beta}) - \Upsilon_\alpha^i(u), \,
      \pt_{y_i} \varphi 
      \rangle }
    \\
    & \leq & 
    \int_{\reali^N} \frac{2}{\alpha} \sup_{U\in[(u,u_\beta)]}
    \left( 
      \rho \left( \frac{U-v}{\alpha} \right) \left(P_i(U)-P_i(v) \right) 
    \right)
    \min \left\{2\alpha, \modulo{u-u_\beta} \right\} \, \pt_{y_i} \varphi 
    \,\mathrm{d}y
    \\
    & &
    +
    \int_{\reali^N} \int_{u}^{u_\beta} \modulo{P_i'(U)} \pt_{y_i} \varphi 
    \, \mathrm{d}U \, \mathrm{d}y \,.
  \end{eqnarray*}
  When $\alpha$ tends to $0$, thanks to the Dominated Convergence
  Theorem, we obtain
  \begin{eqnarray*}
    \modulo{
      \langle 
      \Upsilon^i(u_{\beta})-\Upsilon^i(u), \pt_{y_i}\varphi
      \rangle 
    }
    & \leq &
    \int_{\reali^N} \modulo{u-u_\beta} \, \norma{P_i'}_{\L\infty}
    \pt_{y_i} \varphi \, \mathrm{d}y.
  \end{eqnarray*}
  Applying the Dominated Convergence Theorem again, we see that
  \begin{eqnarray*}
    \lim_{\beta \to 0} \; \lim_{\alpha \to 0}
    \langle \Upsilon_\alpha^i(u_{\beta}), \, \pt_{y_i} \varphi \rangle
    & = &
    \langle \Upsilon^i(u), \, \pt_{y_i} \varphi \rangle \,,
    \\
    \lim_{\beta \to 0} \; \lim_{\alpha \to 0}
    \langle \Upsilon_\alpha(u_{\beta}), \, \nabla_y\varphi \rangle 
    & = &
    \langle \Upsilon(u),\, \nabla_y\varphi \rangle \,.
  \end{eqnarray*}
  Consequently, it is sufficient to find a bound independent of
  $\alpha$ and $\beta$ on $K_{\alpha,\beta}$, where
  \begin{displaymath}
    K_{\alpha, \beta}
    =
    - \int_{\rpis} \int_{\reali^N} \int_{\rpis} \int_{\reali^N} 
    \Upsilon_\alpha(u_\beta) \cdot \nabla_y \varphi
    \, \mathrm{d}x \, \mathrm{d}t \, \mathrm{d}y \, \mathrm{d}s \,.
  \end{displaymath}
  Integrating by parts, we get
  \begin{eqnarray*}
    K_{\alpha,\beta}
    & = &
    \int_{\rpis} \int_{\reali^N} \int_{\rpis} \int_{\reali^N} \!\!
    \Div_{y} 
    \Upsilon_\alpha (u_\beta) \,\varphi 
    \,\mathrm{d}x \, \mathrm{d}t \, \mathrm{d}y \, \mathrm{d}s
    \\
    & = &
    \int_{\rpis} \int_{\reali^N} \int_{\rpis} \int_{\reali^N}  \!\!
    \pt_u s_\alpha(u_\beta-v)  \nabla u_\beta \cdot
    \left( (g-f) (t,x,u_\beta) - (g-f) (t,x,v) \right) \phi
    \, \mathrm{d}x \,\mathrm{d}t\,\mathrm{d}y\,\mathrm{d}s 
    \\
    & &
    +
    \int_{\rpis} \int_{\reali^N} \int_{\rpis} \int_{\reali^N} 
    s_\alpha (u_\beta-v) 
    \left( \pt_u (g-f) (t,x,u_\beta) \cdot \nabla u_\beta \right) \phi 
    \, \mathrm{d}x \, \mathrm{d}t \, \mathrm{d}y \, \mathrm{d}s
    \\
    & = &
    K_1 + K_2 \,.
  \end{eqnarray*}
  We now search a bound for each term of the sum above.
  \begin{itemize}
  \item For $K_1$, recall that $\pt_u s_\alpha(u) = \frac{2}{\alpha}
    \rho \left( \frac{u}{\alpha} \right)$. Hence, by Dominated
    Convergence Theorem, we get that $K_1\to 0$ when $\alpha\to
    0$. Indeed,
    \begin{eqnarray*}
      & &
      \modulo{\frac{2}{\alpha} \rho
        \left( \frac{u_\beta-v}{\alpha} \right) \, \nabla u_\beta 
        \cdot
        \left( (g-f)(t,x,u_\beta) - (g-f)(t,x,v) \right) \, \varphi} 
      \\
      & \leq &
      \frac{2}{\alpha} \rho\left( \frac{u_\beta-v}{\alpha} \right)
      \varphi
      \norma{\nabla u_\beta(s,y)}
      \int_{v}^{u_\beta} \norma{\pt_u(f-g)(t,x,w)}
      \, \mathrm{d}w
      \\
      & \leq & 
      2\norma{\rho}_{\L\infty(\reali;\reali)} \, \norma{\nabla u_\beta(s,y)} \,
      \norma{\pt_u(f-g)}_{\L\infty(\Omega;\reali^N)} \, \varphi 
      \qquad 
      \in \L1 \left((\rpis\times\reali^N)^2;\reali\right).
    \end{eqnarray*}
  \item Concerning $K_2$,
    \begin{eqnarray*}
      K_2
      & \leq &
      \norma{\pt_u(f-g)}_{\L{\infty}(\Omega; \reali^N)}  
      \int_0^{T+\epsilon+\pd} \int_{\reali^N}
      \norma{\nabla u_\beta(s,y)}
      \, \mathrm{d}y \, \mathrm{d}s
      \\
      & \leq &
      \norma{\pt_u(f-g)}_{\L{\infty}(\Omega; \reali^N)} \,
      \int_0^{T+\varepsilon+\pd} \, \tv(u_\beta(t))\, \mathrm{d}t\,.
    \end{eqnarray*}
  \end{itemize}
  Finally, letting $\alpha,\beta\to 0$ and $\epsilon, \pd, \gd \to 0$,
  thanks to \cite[Proposition~3.7]{AmbrosioFuscoPallara}, we get
  \begin{equation}
    \label{eq:estK}
    \begin{array}{rcl}
      \displaystyle   
      \limsup_{\epsilon, \pd, \gd \to 0}  K 
      & \leq &
      \displaystyle 
      \norma{\pt_u(f-g)}_{\L{\infty}}\int_0^T\tv(u(t))\, \mathrm{d}t \,.
    \end{array}
  \end{equation}

  Now, we collate the estimates obtained in~(\ref{eq:estI}),
  (\ref{eq:estJ}), (\ref{eq:estL}), (\ref{eq:uffa})
  and~(\ref{eq:estK}). Remark the order in which we pass to the
  various limits: first $\epsilon,\pd, \theta\to 0$ and, after, $\gd
  \to 0$. Therefore, we get
  \begin{eqnarray*}
    & &
    \int_{B(x_o,R+M(T_o-T))} \modulo{u(T,x)-v(T,x)} \,
    \mathrm{d}x
    \\
    & \leq & 
    \int_{B(x_o, R +MT_o)} \modulo{u(0,x)-v(0,x)}
    \,\mathrm{d}x
    \\
    & &
    +
    \left[
      2 N \norma{\nabla\pt_u f }_{\L{\infty}} 
      +
      \norma{\pt_u F }_{\L{\infty}}
      +
      \norma{\pt_u(F-G)}_{\L\infty}
    \right]
    \\
    & &
    \qquad\quad 
    \times
    \int_0^{T} \!\! \int_{B(x_o,R+M(T_o-t))} 
    \modulo{v(t,x)-u(t,x)}
    \, \mathrm{d}x \, \mathrm{d}t
    \\
    & &
    +  \biggl[
    \norma{\pt_u(f-g)}_{\L{\infty}} 
    \int_0^T  \tv(u(t))\,  \mathrm{d}t
    \\
    & &
    \qquad
    +
    \int_0^T \int_{B(x_o,R+M(T_o-t))}\!
    \norma{\left((F-G)-\div(f-g)\right)(t, y, \cdot)}_{\L{\infty}}
    \mathrm{d}y\, \mathrm{d}t
    \biggr]
  \end{eqnarray*}
  or equivalently
  \begin{equation}\label{eq:aaa}
    A'(T) \leq A'(0) + \kappa \, A(T) + S(T) \,,
  \end{equation}
  where
  \begin{eqnarray}
    \nonumber
    A(T)
    & = &
    \int_0^{T} \int_{B(x_o, R +M(T_o-t))}
    \modulo{v(t,x)-u(t,x)}
    \, \mathrm{d}x \, \mathrm{d}t\, ,
    \\
    \label{eq:table}
    \kappa
    & = &
    2 N \norma{\nabla\pt_u f }_{\L{\infty}}
    +
    \norma{\pt_u F }_{\L{\infty}}
    +
    \norma{\pt_u (F-G) }_{\L{\infty}}\, ,
    \\
    \nonumber
    S(T)
    & = &
    \norma{\pt_u(f-g)}_{\L{\infty}}
    \int_0^T \tv\left(u(t) \right)\, \mathrm{d}t
    \\
    & &
    \quad
    +
    \int_0^T \int_{B(x_o,R+M(T_o-t))}\!
    \norma{\left((F-G)-\div(f-g)\right)(t,y,\cdot)}_{\L{\infty}}
    \!
    \mathrm{d}y \, \mathrm{d}t .
  \end{eqnarray}
  The bound~(\ref{result}) on $\tv\left(u(t)\right)$ gives:
  \begin{eqnarray*}
    S(T)
    & \leq & 
    \frac{e^{\kappa_o T} -1}{\kappa_o} a
    +
    \int_0^T \frac{e^{\kappa_o (T-t)} -1}{\kappa_o} b(t)\mathrm{d}t 
    +
    \int_0^T c(t)\mathrm{d}t
  \end{eqnarray*}
  where $\kappa_o$ is defined in~(\ref{eq:kappao}) and
  \begin{eqnarray*}
    a
    & = &
    \norma{\pt_u(f-g)}_{\L{\infty}}\tv(u_o) \,,
    \\
    b(t)
    & = & 
    N W_N \norma{\pt_u(f-g)}_{\L{\infty}} 
    \int_{\reali^N}
    \norma{\nabla(F-\div  f)(t,x,\cdot)}_{\L\infty}
    \, \mathrm{d}x \,,
    \\
    c(t)
    & = &
    \int_{B(x_o,R+M(T_o-t))}
    \norma{\left((F-G)-\div(f-g)\right)(t, y,\cdot)}_{\L{\infty}}
    \,\mathrm{d}y  \,,
  \end{eqnarray*}
  since $T \leq T_o$. Consequently
  \begin{equation}
    \label{aab}
    A'(T)
    \leq
    A'(0)
    +
    \kappa A(T)
    + 
    \left(
      \frac{e^{\kappa_o T} - 1}{\kappa_o} a
      +
      \int_0^T \frac{e^{\kappa_o (T-t)} -1}{\kappa_o} b(t)\mathrm{d}t 
      +
      \int_0^T c(t)\mathrm{d}t \right)
    \,.
  \end{equation}
  By a Gronwall type argument, if $\kappa_o = \kappa$, we get
  \begin{displaymath}
    A'(T)
    \leq 
    e^{\kappa T} A'(0)
    +
    T e^{\kappa T} a
    +
    \left(
      \int_0^T  (T-t)e^{\kappa(T-t)} b(t)\, \mathrm{d}t
    \right)
    \left(
      \int_0^T e^{\kappa(T-t)}c(t)\, \mathrm{d}t
    \right) 
  \end{displaymath}
  yielding
  \begin{eqnarray}
    \nonumber
    & &
    \int_{\norma{x-x_o}\leq R} \modulo{u(T,x)-v(T,x)} \, \mathrm{d}x
    \; \leq \;
    e^{\kappa T} \int_{\norma{x-x_o}\leq R +MT}
    \modulo{u_o(x)-v_o(x)} \, \mathrm{d}x
    \\
    \label{eqKKo}
    & + &
    Te^{\kappa T}  \tv(u_o) \, 
    \norma{\pt_u(f-g)}_{\L{\infty}}  
    \\
    \nonumber
    & + &
    NW_N 
    \left(
      \int_0^T (T-t)e^{\kappa(T-t)}
      \int_{\reali^N}
      \norma{\nabla(F-\div f)(t, x,\cdot)}_{\L\infty}
      \mathrm{d}x \, \mathrm{d}t 
    \right)
    \norma{\pt_u(f-g)}_{\L{\infty}}
    \\
    \nonumber
    & + &
    \int_0^T e^{\kappa (T-t)} \int_{\norma{x-x_o}\leq R+M(T-t)}
    \norma{\left((F-G) - \div(f-g) \right)(t,x,\cdot)}_{\L{\infty}} 
    \, \mathrm{d}x\, \mathrm{d}t
  \end{eqnarray}
  while, in the case $\kappa_o \neq \kappa$, we have
  \begin{eqnarray*}
    A'(T)
    & \leq & 
    e^{\kappa T} A'(0)
    +
    \frac{e^{\kappa_o T}-e^{\kappa T}}{\kappa_o-\kappa} \, a
    +
    \int_0^T \frac{e^{\kappa_o (T-t)}-e^{\kappa (T-t)}}{\kappa_o-\kappa}\, b(t)
    \, \mathrm{d}t 
    +
    \int_0^T e^{\kappa(T-t)}c(t)\, \mathrm{d}t\, .
  \end{eqnarray*}
  Taking $T=T_o$, we finally obtain the result.
\end{proofof}

\begin{remark}
  Assuming that also $(g,G)$ satisfies~\textbf{(H2)}, allows us to
  exchange the role of $u$ and $v$ in~(\ref{eq:table}). Let
  \begin{eqnarray*}
    \tilde \kappa_o
    & = &
    N W_N
    \left( 
      (2N+1) 
      \norma{\nabla\pt_u g}_{\L{\infty}} 
      + 
      \norma{\pt_u G}_{\L{\infty}}
    \right) \,,
    \\
    \tilde a
    & = &
    \norma{\pt_u(f-g)}_{\L{\infty}} \tv(v_o) \,,
    \\
    \tilde b(t)
    & = &
    \norma{\pt_u(f-g)}_{\L{\infty}}
    N W_ N  \int_{\reali^N}
    \norma{\nabla(G-\div g)(t,x,\cdot)}_{\L\infty} \, \mathrm{d}x  \,,
    \\
    \tilde \kappa
    & = &
    2 N 
    \norma{\nabla\pt_u g}_{\L{\infty}}+\norma{\pt_u G}_{\L{\infty}} 
    +
    \norma{\pt_u (F-G)}_{\L{\infty}}  \,,
  \end{eqnarray*}
  and repeating the same computations as above, we obtain
  \begin{displaymath}
    A'(T)
    \leq 
    A(0) + \tilde \kappa A(T)+
    \left(
      \frac{e^{\tilde\kappa_o T} - 1}{\tilde\kappa_o}\, \tilde a
      +
      \int_0^T 
      \frac{e^{\tilde\kappa_o (T-t)} -1}{\tilde\kappa_o} \, \tilde
      b(t)
      \, \mathrm{d}t 
      +
      \int_0^T c(t)\, \mathrm{d}t \right)
  \end{displaymath}
  so that, finally,
  \begin{eqnarray*}
    A'(T)
    & \leq &
    A'(0)
    +
    \min(\kappa,\tilde\kappa) A(T) 
    + 
    \max 
    \left[ 
      \frac{e^{\kappa_o T} - 1}{\kappa_o}\, a
      +
      \int_0^T \frac{e^{\kappa_o (T-t)} -1}{\kappa_o} \,  b(t)\,
      \mathrm{d}t ,
    \right.
    \\
    & &
    \left.
      \frac{e^{\tilde\kappa_o T} - 1}{\tilde\kappa_o}\, \tilde a
      +
      \int_0^T 
      \frac{e^{\tilde\kappa_o (T-t)} -1}{\tilde\kappa_o} \, \tilde b(t)
      \, \mathrm{d}t 
    \right]
    + \int_0^T c(t)\mathrm{d}t.
  \end{eqnarray*}
\end{remark}

We collect below some lemmas that were used in the previous proof. The
first one reminds a part of the proof
of~\cite[Theorem~2.1]{bouchutperthame}.

\begin{lemma}
  \label{lem:estI}
  Let $I$ be defined as in~(\ref{eq:I}). Then,
  \begin{eqnarray*}
    \limsup_{\epsilon\to0} I 
    & \leq &
    \int_{\norma{x-x_o} \le R+MT_o+\theta}
    \modulo{u(0,x)-v(0,x)} \, \mathrm{d}x
    \\
    & &
    -
    \int_{\norma{x-x_o}\le R +M(T_o-T)} 
    \modulo{u(T,x)-v(T,x)} \, \mathrm{d}x 
    + 
    2 \sup_{\tau\in\{0,T\}} \tv \left(u(\tau) \right) \gd
    \\
    & &      
    +
    2 \sup_{t\in \{0,T\} \atop s\in\left]t,t+\pd\right[}
    \int_{\norma{y-x_o}\le R +\gd+M(T_o-t)+\theta}
    \modulo{u(t,y)-u(s,y)} \, \mathrm{d}y \,.
  \end{eqnarray*}
\end{lemma}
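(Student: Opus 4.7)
The plan is to pass to the limit $\epsilon\to 0$ first, exploiting that $\chi'(t)=Y'_\epsilon(t)-Y'_\epsilon(t-T)$ converges to $\delta_0-\delta_T$. Since $u$ and $v$ are right-continuous in $\Lloc1$ by Theorem~\ref{teo:kruzkov}, and the initial datum is attained in the sense of~(\ref{eq:data}), this reduces $I$ to $A_0 - A_T$, where for $\tau \in \{0, T\}$
\begin{displaymath}
A_\tau = \int_\tau^{\tau+\pd}\!\!\int_{\reali^N}\!\!\int_{\reali^N}\!\modulo{u(s,y)-v(\tau,x)}\,\psi(\tau,x)\,\nu(\tau-s)\,\mu(x-y)\,\mathrm{d}x\,\mathrm{d}y\,\mathrm{d}s,
\end{displaymath}
the restriction $s \in \left]\tau, \tau+\pd\right[$ coming from $\mathrm{supp}(\nu_1)\subseteq \left]-1,0\right[$.

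Next I would decouple the space and time increments via triangle inequalities: for $A_0$ use
\begin{displaymath}
\modulo{u(s,y)-v_o(x)} \leq \modulo{u(s,y)-u_o(y)} + \modulo{u_o(y)-u_o(x)} + \modulo{u_o(x)-v_o(x)},
\end{displaymath}
and, reversing, for $-A_T$ use
\begin{displaymath}
\modulo{u(s,y)-v(T,x)} \geq \modulo{u(T,x)-v(T,x)} - \modulo{u(T,x)-u(T,y)} - \modulo{u(T,y)-u(s,y)}.
\end{displaymath}
Using $\int\nu(\tau-s)\,\mathrm{d}s = \int\mu(x-y)\,\mathrm{d}y = 1$, the initial-value contributions $\modulo{u_o-v_o}\psi(0,\cdot)$ and $\modulo{u(T,\cdot)-v(T,\cdot)}\psi(T,\cdot)$ are sandwiched between characteristic functions of $B(x_o,R+M(T_o-\tau))$ and $B(x_o,R+M(T_o-\tau)+\theta)$ thanks to the bounds on $\psi$; the purely spatial pieces $\modulo{u(\tau,x)-u(\tau,y)}$, convolved with $\mu(x-y)$, are bounded by~(\ref{eq:afp}) together with $\int_{\reali^N}\norma{z}\,\mu(z)\,\mathrm{d}z\leq\gd$, giving $\gd\,\tv(u(\tau))$; and the temporal pieces $\modulo{u(\tau,y)-u(s,y)}$ are localized by the observation $\int\psi(\tau,x)\mu(x-y)\,\mathrm{d}x \leq \mathbf{1}_{\norma{y-x_o}\leq R+\gd+M(T_o-\tau)+\theta}$.

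Collating these estimates directly produces the claimed inequality: the two $\gd\,\tv$ terms combine into the $2\gd\sup_{\tau\in\{0,T\}}\tv(u(\tau))$ summand, and the two temporal sups at $\tau=0$ and $\tau=T$ form the last line. The main obstacle is the bookkeeping of the enlarged localization: the supports of $\mu$ (radius $\gd$), of the cutoff in $\psi$ (width $\theta$) and of $\nu$ (width $\pd$) each contribute to the radius $R+\gd+M(T_o-\tau)+\theta$ appearing in the supremum, so every inflation must be tracked with care. A secondary technical point is the rigorous justification of the $\epsilon\to 0$ passage when $u,v$ are defined only almost everywhere in time; this is handled by the right-continuity statement in Theorem~\ref{teo:kruzkov} together with~(\ref{eq:data}).
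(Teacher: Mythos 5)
Your proof is correct and uses essentially the same decomposition as the paper. The only difference is cosmetic: the paper splits $I\leq I_1+I_2+I_3$ by the triangle inequality $\modulo{u(s,y)-v(t,x)}\leq\modulo{u(t,x)-v(t,x)}+\modulo{u(t,x)-u(t,y)}+\modulo{u(t,y)-u(s,y)}$ \emph{before} letting $\epsilon\to0$, whereas you pass to the limit first and then split, while the remaining estimates (right-continuity in time, sandwiching $\psi$ between indicators, $\int\norma{z}\mu(z)\,\mathrm{d}z\leq\gd$ combined with~(\ref{eq:afp}), and the localization $\int\psi(\tau,x)\mu(x-y)\,\mathrm{d}x\leq\mathbf{1}_{\norma{y-x_o}\leq R+\gd+M(T_o-\tau)+\theta}$) coincide with the paper's.
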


\begin{proof}
  By the triangle inequality $I \leq I_1+I_2+I_3$, with
  \begin{eqnarray*}
    I_1
    & = &
    \int_{\rpis} \int_{\reali^N}\int_{\rpis} \int_{\reali^N}
    \modulo{u(t,x)-v(t,x)} 
    \, \chi'(t) \, \psi(t,x) \, \Psi(t-s,x-y)
    \, \mathrm{d}x \, \mathrm{d}t \, \mathrm{d}y \, \mathrm{d}s \,,
    \\
    I_2
    & = &
    \int_{\rpis} \int_{\reali^N}\int_{\rpis} \int_{\reali^N} 
    \modulo{u(t,x)-u(t,y)} 
    \, \modulo{\chi'(t)} \, \psi(t,x) \, \Psi(t-s,x-y)
    \,\mathrm{d}x \, \mathrm{d}t \, \mathrm{d}y \, \mathrm{d}s  \,,
    \\
    I_3
    & = &
    \int_{\rpis} \int_{\reali^N}\int_{\rpis} \int_{\reali^N} 
    \modulo{u(t,y)-u(s,y)}
    \, \modulo{\chi'(t)} \, \psi(t,x) \, \Psi(t-s,x-y)
    \, \mathrm{d}x \, \mathrm{d}t \, \mathrm{d}y \, \mathrm{d}s\, .
  \end{eqnarray*}
  Then,
  \begin{eqnarray*}
    I_1 
    & = &
    \int_{\rpis} \int_{\reali^N}
    \modulo{u(t,x)-v(t,x)}
    \left(Y_\epsilon '(t) - Y_\epsilon '(t-T) \right) 
    \psi(t,x) 
    \, \mathrm{d}x \, \mathrm{d}t
    \\
    & \leq &
    \int_{\rpis} \int_{\norma{x-x_o}\le R +M(T_o-t)+\theta}
    \modulo{u(t,x)-v(t,x)} \, Y_\epsilon'(t)
    \, \mathrm{d}x \, \mathrm{d}t
    \\
    & &
    -
    \int_{\rpis}\int_{\norma{x-x_o} \le R +M(T_o-t)}
    \modulo{u(t,x)-v(t,x)} \, Y_\epsilon'(t-T)
    \, \mathrm{d}x \, \mathrm{d}t
  \end{eqnarray*}
  and by the $\L1$ right continuity of $u$ and $v$ in time, thanks to
  Theorem~\ref{teo:kruzkov}
  \begin{eqnarray*}
    \limsup_{\epsilon\to 0} I_1 
    & \leq &  
    \int_{\norma{x-x_o}\le R +MT_o+\theta}
    \modulo{u(0,x)-v(0,x)} 
    \, \mathrm{d}x
    \\
    & &
    -
    \int_{\norma{x-x_o}\le R +M(T_o-T)}
    \modulo{u(T,x)-v(T,x)}
    \, \mathrm{d}x \,.
  \end{eqnarray*}
  For $I_2$ and $I_3$, we have
  \begin{eqnarray*}
    I_2 
    & \leq &
    \int_{\rpis} \! \int_{\reali^N}\!
    \int_{\norma{x-x_o}\le R+M(T_o-t)+\theta}
    \modulo{u(t,x)-u(t,y)} (Y_\epsilon'(t) +Y_\epsilon'(t-T))
    \mu
    \, \mathrm{d}x \, \mathrm{d}y \, \mathrm{d}t \,,
    \\
    I_3 
    & \leq & 
    \int_{\rpis} \! \int_{\rpis} \! 
    \int_{\norma{y-x_o}\le R+\gd+M(T_o-t)+\theta}
    \modulo{u(t,y)-u(s,y)} 
    \left( Y_\epsilon'(t) + Y_\epsilon'(t-T) \right)
    \nu
    \,\mathrm{d}y \,\mathrm{d}s\,\mathrm{d}t\, .
  \end{eqnarray*}
  As $\epsilon \to 0$, we use on the one hand the $\L1$ right
  continuity in time of $u$, thanks to Theorem~\ref{teo:kruzkov}, and
  on the other hand that $u(t) \in \BV(\reali^N; \reali)$, thanks to
  Theorem~\ref{teo:tv}. In particular, we can use~(\ref{eq:afp}) to
  obtain
  \begin{eqnarray*}
    \limsup_{\epsilon\to 0} I_2
    & \leq & 
    \sum_{t=0,T} \sup_{\norma{h}\leq \gd}
    \int_{\norma{x-x_o}\le R +M(T_o-t)+\theta}
    \modulo{u(t,x)-u(t,x+h)}
    \, \mathrm{d}x
    \\
    & \leq & 
    2 \sup_{\norma{h}\le \gd \atop t\in\{0,T\}}
    \, \int_{\norma{x-x_o}\le R +M(T_o-t)+\theta}
    \modulo{u(t,x)-u(t,x+h)} 
    \, \mathrm{d}x
    \\
    & \leq & 
    2 \sup_{t\in\{0,T\}} \tv\left(u(t)\right) \gd\, ,
    \\
    \limsup_{\epsilon\to 0} I_3
    & \leq &
    \sum_{t=0,T} \sup_{s \in \left]t,t+\pd\right[}
    \int_{\norma{y-x_o}\le R+\gd+M(T_o-t)+\theta}
    \modulo{u(t,y)-u(s,y)}
    \, \mathrm{d}y
    \\
    & \leq & 
    2
    \sup_{t\in \{0,T\} \atop s\in\left]t,t+\pd\right[}
    \int_{\norma{y-x_o}\le R +\gd+M(T_o-t)+\theta}
    \modulo{u(t,y)-u(s,y)}
    \, \mathrm{d}y \,.
  \end{eqnarray*}
\end{proof}

\begin{lemma}
  \label{lem:estJ}
  Let $J_x$ be defined as in~(\ref{eq:Jx}). Then,
  \begin{eqnarray*}
    \limsup_{\epsilon\to 0} J_x 
    & \leq &
    N
    \norma{\nabla\pt_u f }_{\L\infty} 
    \int_0^{T}\int_{B(x_o,R+M(T_o-t)+\theta)} 
    \modulo{v(t,x)-u(t,x)}\mathrm{d}x
    \, \mathrm{d}t
    \\
    & &
    +
    N T \norma{\nabla\pt_u f }_{\L\infty} 
    \sup_{\tau\in[0,T]} \tv \left(u(\tau) \right) \gd
    \\
    & &
    +
    N T \norma{\nabla\pt_u f }_{\L\infty} 
    \sup_{t\in [0,T] \atop s\in\left]t,t+\pd\right[}
    \int_{\norma{y-x_o}\le R +\gd+M(T_o-t)+\theta}
    \modulo{u(t,y)-u(s,y)}
    \, \mathrm{d}y
    \,.
  \end{eqnarray*}
\end{lemma}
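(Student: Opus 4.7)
The plan is to mirror the handling of the corresponding $J_x$ term in the proof of Theorem~\ref{teo:tv}, adapting it to the present setting where $u$ and $v$ solve different equations. First I would exploit $f\in\C2(\Omega;\reali^N)$ by writing
\begin{displaymath}
 f(t,y,v)-f(t,y,u)+f(t,x,u)-f(t,x,v) = \int_u^v \!\int_0^1 \nabla\pt_u f\bigl(t,\,x+r(y-x),\,w\bigr)\cdot(y-x)\,\mathrm{d}r\,\mathrm{d}w,
\end{displaymath}
which yields the pointwise bound $\norma{\nabla\pt_u f}_{\L\infty}\,\norma{x-y}\,\modulo{u(s,y)-v(t,x)}$ on the integrand of $J_x$.

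Next I would apply the triangle inequality
\begin{displaymath}
 \modulo{u(s,y)-v(t,x)} \leq \modulo{u(s,y)-u(t,y)} + \modulo{u(t,y)-u(t,x)} + \modulo{u(t,x)-v(t,x)},
\end{displaymath}
splitting $J_x \leq T_1+T_2+T_3$, one summand per term above. Each $T_i$ is then handled by changing variables $z=x-y$ and unravelling $\Psi=\nu\,\mu$, $\Phi=\chi\,\psi$ together with the identities~(\ref{eq:mu})--(\ref{eq:mu4}). For $T_3$, since $\modulo{u(t,x)-v(t,x)}$ does not depend on $z$ or $s$, integrating $\norma{z}\,\norma{\nabla\mu(z)}$ over $z$ yields $N$ by~(\ref{eq:mu3}), while $\int\nu=1$; together with $\chi\leq\mathbf{1}_{[0,T+\epsilon]}$ and $\psi\leq\mathbf{1}_{B(x_o,R+M(T_o-t)+\theta)}$, passing to $\epsilon\to 0$ reproduces the first term of the stated bound. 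Similarly for $T_1$: since $\modulo{u(s,y)-u(t,y)}$ is independent of $x$ and $z$, the $z$-integration again contributes $N$, the support of $\nu$ forces $s\in\,]t,t+\pd[$, integration of $\chi$ produces the factor $T+\epsilon$, and the joint support of $\mu$ and $\psi$ confines $y$ to $B(x_o,R+\gd+M(T_o-t)+\theta)$, yielding the third term after $\epsilon\to 0$.

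The main obstacle is $T_2$, because $\modulo{u(t,y)-u(t,x)}$ couples $x$ and $y$ in a nontrivial way. Here the plan is to change variables $z=x-y$, bound $\psi\leq 1$, and invoke the $\BV$ inequality~(\ref{eq:afp})
\begin{displaymath}
 \int_{\reali^N} \modulo{u(t,x)-u(t,x-z)}\,\mathrm{d}x \;\leq\; \norma{z}\,\tv\bigl(u(t)\bigr),
\end{displaymath}
whose applicability is exactly what Theorem~\ref{teo:tv} guarantees. The remaining integral $\int \norma{z}^2\,\norma{\nabla\mu(z)}\,\mathrm{d}z$ scales like $\gd$ by~(\ref{eq:mu3})--(\ref{eq:mu4}), and combining this with the bounds on $\chi$ and $\nu$ produces the second term $N\,T\,\norma{\nabla\pt_u f}_{\L\infty}\,\sup_{\tau\in[0,T]}\tv(u(\tau))\,\gd$. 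Collating the three estimates and taking $\limsup_{\epsilon\to 0}$ closes the argument.
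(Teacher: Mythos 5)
Your proposal is correct and mirrors the paper's proof: same Taylor-type representation for the $f$-quadruple, same triangle-inequality split into three pieces (your $T_1,T_2,T_3$ are the paper's $J_3,J_2,J_1$), and the same treatment of each piece using the product structure $\Psi=\nu\mu$, $\Phi=\chi\psi$ together with~(\ref{eq:mu3}) and~(\ref{eq:afp}). The only cosmetic difference is in $T_2$, where you integrate $\norma{z}^2\norma{\nabla\mu(z)}$ and appeal to~(\ref{eq:mu4}), while the paper first bounds $\norma{z}\leq\gd$ on the support of $\mu$ and then uses~(\ref{eq:mu3}); since $\norma{z}^2\norma{\nabla\mu(z)}\leq\gd\,\norma{z}\norma{\nabla\mu(z)}$ there, your constant is at most $N\gd$, so the stated estimate follows either way.
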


\begin{proof}
  By assumptions~\textbf{(H1)}, $f \in \C2(\Omega; \reali^N)$ and
  therefore
  \begin{eqnarray*}
    & &
    \norma{f(t,y,v)-f(t,y,u)+f(t,x,u)-f(t,x,v)}
    \\
    & = &
    \norma{
      \int_{u(s,y)}^{v(t,x)} \int_0^1 \nabla \pt_u f
      \left(t,x(1-r)+ry, w\right)
      \cdot (y-x) 
      \, \mathrm{d}r \, \mathrm{d}w
    }
    \\
    & \leq &
    \norma{\nabla\pt_u f}_{\L\infty} \, \norma{x-y} 
    \, \modulo{v(t,x)-u(s,y)} \,.
  \end{eqnarray*}
  Then,
  \begin{eqnarray*}
    J_x 
    & \leq & 
    \norma{\nabla\pt_{u} f}_{\L\infty} \int_{\rpis}
    \int_{\reali^N} \int_{\rpis} \int_{\reali^N}
    \modulo{v(t,x)-u(s,y)} \, \norma{x-y} \, \norma{\nabla\mu} \, \nu
    \, \chi \, \psi
    \,\mathrm{d}x \,\mathrm{d}t\,\mathrm{d}y\,\mathrm{d}s \,.
  \end{eqnarray*}
  Similarly to the proof of Lemma~\ref{lem:estI}, we apply the
  triangle inequality and obtain $J_x \leq J_1 + J_2 + J_3$ where
  \begin{eqnarray*}
    J_1
    & = &
    \norma{\nabla\pt_u f}_{\L\infty}
    \int_{\rpis} \int_{\reali^N} \int_{\rpis} \int_{\reali^N} 
    \modulo{v(t,x)-u(t,x)} \, \norma{x-y} \, \norma{\nabla\mu}
    \, \nu \, \chi \, \psi
    \, \mathrm{d}x \,\mathrm{d}t \,\mathrm{d}y \, \mathrm{d}s \,,
    \\
    J_2
    & = &
    \norma{\nabla\pt_u f }_{\L\infty}
    \int_{\rpis} \int_{\reali^N} \int_{\rpis} \int_{\reali^N} 
    \modulo{u(t,x)-u(t,y)} \, \norma{x-y} \, \norma{\nabla\mu}
    \, \nu \, \chi \, \psi
    \, \mathrm{d}x \,\mathrm{d}t \, \mathrm{d}y \, \mathrm{d}s \,,
    \\
    J_3
    & = & 
    \norma{\nabla\pt_u f }_{\L\infty}
    \int_{\rpis} \int_{\reali^N} \int_{\rpis} \int_{\reali^N}  
    \modulo{u(t,y)-u(s,y)} \, \norma{x-y} \, \norma{\nabla\mu}
    \, \nu \, \chi \, \psi
    \, \mathrm{d}x \,\mathrm{d}t \, \mathrm{d}y \, \mathrm{d}s\,.
  \end{eqnarray*}
  For $J_1$, we have, thanks to~(\ref{eq:mu3})
  \begin{displaymath}
    J_1
    \leq 
    N \norma{\nabla\pt_u f }_{\L\infty}
    \int_0^{T+\epsilon}\int_{B(x_o, R+M(T_o-t)+\theta)}
    \modulo{v(t,x)-u(t,x)}
    \, \mathrm{d}x \, \mathrm{d}t\, .
  \end{displaymath}
  For $J_2$, we have
  \begin{eqnarray*}
    J_2 
    & \leq & 
    N \norma{\nabla\pt_u f }_{\L\infty}
    \int_0^{T+\epsilon}\sup_{\norma{h}\le \gd} 
    \int_{\norma{x-x_o}\le R +M(T_o-t)+\theta}
    \modulo{u(t,x)-u(t,x+h)}
    \, \mathrm{d}x \, \mathrm{d}t
    \\
    & \leq & 
    N \norma{\nabla\pt_u f }_{\L\infty} (T+\epsilon)
    \sup_{\tau\in[0,T+\epsilon]} \tv \left(u(\tau) \right) \, \gd
    \, ,
  \end{eqnarray*}
  and for $J_3$
  \begin{eqnarray*}
    J_3 
    & \leq & 
    N \norma{\nabla\pt_u f }_{\L\infty}
    \int_0^{T+\epsilon} \sup_{s \in \left]t, t+\pd\right[}
    \int_{\norma{x-x_o}\le R +\gd+M(T_o-t)+\theta}
    \modulo{u(t,y)-u(s,y)}
    \, \mathrm{d}y \, \mathrm{d}t
    \\
    & \leq &
    N \norma{\nabla\pt_u f }_{\L\infty} 
    (T+\epsilon) 
    \sup_{t\in [0,T+\epsilon] \atop s\in\left]t,t+\pd\right[}
    \int_{\norma{y-x_o}\le R +\gd+M(T_o-t)+\theta}
    \modulo{u(t,y)-u(s,y)}
    \, \mathrm{d}y \,.
  \end{eqnarray*}
  In particular, letting $\gd, \pd,\epsilon, \theta \to 0$, we prove
  that $J_2, J_3\to 0$ and
  \begin{displaymath}
    \limsup_{\gd, \pd,\epsilon, \theta \to 0} J_1
    \leq
    N \norma{\nabla\pt_u f}_{\L\infty}
    \int_0^{T} \int_{B(x_o, R+M(T_o-t))}
    \modulo{v(t,x)-u(t,x)}
    \, \mathrm{d}x \, \mathrm{d}t
  \end{displaymath}
  completing the proof.
\end{proof}

\begin{lemma}
  \label{lem:estL}
  Let $L_x$ be defined as in~(\ref{eq:Lx}) and $M_1$ as
  in~(\ref{eq:M1}). Then
  \begin{eqnarray*}
    \limsup_{\epsilon\to 0} L_x 
    & \leq &
    T \int_0^T \int_{\norma{x-x_o}\leq R+M(T_o-t)+\theta} 
    \norma{\left((F-G)-\div(f-g)\right)(t,x,\cdot)}_{\L{\infty}}
    \, \mathrm{d}x\, \mathrm{d}t
    \\
    & &
    +
    \left(
      N \norma{\nabla\pt_u f }_{\L{\infty}}
      +
      \norma{\pt_u F }_{\L{\infty}}
      +
      \norma{\pt_u(F-G)}_{\L\infty} 
    \right)
    \\
    & &
    \quad 
    \times
    \left[
      \int_0^{T} \int_{B(x_o, R+M(T_o-t)+\theta)}
      \modulo{v(t,x)-u(t,x)}
      \, \mathrm{d}x \, \mathrm{d}t\right.
    \\
    & &
    \qquad 
    + 
    T \sup_{\tau\in[0,T]}\tv(u(\tau))\gd 
    \\
    & &
    \qquad 
    \left.
      +
      T 
      \sup_{t\in [0,T] \atop s\in\left]t,t+\pd\right[}
      \int_{\norma{y-x_o}\le R +\gd+M(T_o-t)+\theta}
      \modulo{u(t,y)-u(s,y)}
      \, \mathrm{d}y
    \right]
    \\
    & &
    +
    \gd \, M_1
    \int_0^T \int_{\reali^N} 
    \norma{\nabla (F-\div f)(t,x,\cdot)}_{\L{\infty}}
    \mathrm{d}x \, \mathrm{d}t
    \,.
  \end{eqnarray*}
\end{lemma}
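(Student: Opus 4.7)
The plan is to imitate the decomposition of $L_x$ used in the proof of Theorem~\ref{teo:tv}, adapted to the situation where $(f,F)\neq(g,G)$. The first and key step is the algebraic identity
\begin{eqnarray*}
  F(t,y,u) - G(t,x,v) + \div g(t,x,u) - \div f(t,y,v)
  & = &
  \Lambda_1 + \Lambda_2 + \Lambda_3 + \Lambda_4 ,
\end{eqnarray*}
where
\begin{eqnarray*}
  \Lambda_1 & = & \int_v^u \left[ \pt_u F(t,y,w) + \pt_u \div f(t,x,w) \right] \mathrm{d}w , \\
  \Lambda_2 & = & \int_0^1 \nabla (F-\div f)(t,(1-r)x+ry,v) \cdot (y-x) \, \mathrm{d}r , \\
  \Lambda_3 & = & - \int_v^u \pt_u (F-G)(t,x,w) \, \mathrm{d}w , \\
  \Lambda_4 & = & [(F-G) - \div(f-g)](t,x,u) ,
\end{eqnarray*}
which I would verify by adding and subtracting $F(t,x,v)$, $F(t,y,v)$, $\div f(t,x,u)$, $\div f(t,x,v)$ and $(F-G)(t,x,u)$. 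Each $\Lambda_i$ is tailored to produce exactly one summand of the stated bound.

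The term $\Lambda_4$ is pointwise bounded by $\norma{((F-G)-\div(f-g))(t,x,\cdot)}_{\L\infty}$, so using $\int \mu(x-y)\,\mathrm{d}y = \int \nu(t-s)\,\mathrm{d}s = 1$ its contribution reduces to the first summand of the bound. For $\Lambda_1$ and $\Lambda_3$ I would invoke the estimates $\modulo{\Lambda_1} \leq (N \norma{\nabla\pt_u f}_{\L\infty} + \norma{\pt_u F}_{\L\infty})\modulo{u(s,y)-v(t,x)}$ (using $\modulo{\pt_u \div f}\leq N\norma{\nabla\pt_u f}_{\L\infty}$) and $\modulo{\Lambda_3} \leq \norma{\pt_u(F-G)}_{\L\infty}\modulo{u(s,y)-v(t,x)}$, then apply the triangle inequality
\begin{displaymath}
\modulo{u(s,y)-v(t,x)} \leq \modulo{u(s,y)-u(t,y)} + \modulo{u(t,y)-u(t,x)} + \modulo{u(t,x)-v(t,x)} .
\end{displaymath}
The three resulting pieces are estimated exactly as in Lemma~\ref{lem:estJ}: the middle one, via~(\ref{eq:afp}), gives the $\sup\tv(u(\tau))\,\gd$ summand; the first one gives the time-modulus-of-continuity summand; the third gives the space integral of $\modulo{v-u}$.

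For $\Lambda_2$ I would use the pointwise bound $\modulo{\Lambda_2} \leq \norma{y-x}\int_0^1 \norma{\nabla(F-\div f)(t,(1-r)x+ry,\cdot)}_{\L\infty}\mathrm{d}r$ and mimic the treatment of $L_2$ in the proof of Theorem~\ref{teo:tv}: the $\mathrm{d}s$-integration kills $\nu$, and the change of variables $z=y-x$ followed by the translation $x \mapsto x+rz$ separates the integrations, producing the factor $\int_{\reali^N}\norma{z}\,\mu(z)\,\mathrm{d}z=\gd\,M_1$ from~(\ref{eq:M1}) together with the space integral of $\norma{\nabla(F-\div f)(t,x,\cdot)}_{\L\infty}$. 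Taking $\limsup_{\epsilon\to 0}$ in each of the four contributions (using that $\chi\to\mathbf{1}_{[0,T]}$) yields the stated bound.

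The real difficulty is purely the bookkeeping of the decomposition: one must arrange the added and subtracted quantities so that $\Lambda_2$ has $\nabla F$ and $\nabla\div f$ evaluated at the \emph{same} interior point (so that the combined gradient $\nabla(F-\div f)$ appears and can be controlled by the assumption in~\textbf{(H2)}, rather than by the much rougher sum of the two separate gradients), and so that $\Lambda_4$ is exactly $[(F-G)-\div(f-g)]$ at a single value of $u$ (to match the assumption in~\textbf{(H3)} for $(f-g,F-G)$). Once the identity is in hand, the four estimates are routine refinements of those already performed for $J_x$ in Lemma~\ref{lem:estJ} and for $L_x$ in the proof of Theorem~\ref{teo:tv}, and no new limiting argument beyond $\epsilon\to 0$ is required.
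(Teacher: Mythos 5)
Your decomposition is exactly the paper's: your $\Lambda_4,\Lambda_3,\Lambda_1,\Lambda_2$ are the paper's $L_1,L_2,L_3,L_4$ respectively (written as integral representations rather than finite differences), the algebraic identity checks out, and the subsequent pointwise bounds, triangle inequality splitting, and limit $\epsilon\to 0$ all match. The proof is correct and follows the same route as the paper; in fact it proves the sharper form of the first summand (without the extraneous factor $T$ that appears in the lemma statement but is dropped when the lemma is applied in the proof of Theorem~\ref{teo:estimates}).
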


\begin{proof}
  Let
  \begin{eqnarray*}
    L_1
    & = &
    \int_{\rpis} \int_{\reali^N} \int_{\rpis} \int_{\reali^N}  
    \left((F - G) - \div  (f-g)\right)(t,x,u) 
    \, \varphi 
    \, \mathrm{sign}(u-v) \, \mathrm{d}x \, \mathrm{d}t \, \mathrm{d}y
    \, \mathrm{d}s \,,
    \\
    L_2
    & = &
    \int_{\rpis} \int_{\reali^N} \int_{\rpis} \int_{\reali^N}  
    \left((F-G)(t,x,v)-(F-G)(t,x,u) \right)
    \, \varphi 
    \, \mathrm{sign}(u-v) \, \mathrm{d}x \, \mathrm{d}t \, \mathrm{d}y
    \,\mathrm{d}s \,,
    \\
    L_3
    & = &
    \int_{\rpis} \int_{\reali^N} \int_{\rpis} \int_{\reali^N}
    \left(
      F(t,y,u) - F(t,y,v) + \div  f(t,x,u) - \div  f(t,x,v)
    \right)
    \varphi
    \\
    & &
    \qquad\qquad \qquad
    \times 
    \,\mathrm{sign}(u-v) \, \mathrm{d}x \, \mathrm{d}t \, \mathrm{d}y
    \, \mathrm{d}s \,,
    \\
    L_4
    & = &
    \int_{\rpis} \int_{\reali^N} \int_{\rpis} \int_{\reali^N}  
    \left( (F-\div f)(t,y,v)-(F-\div  f)(t,x,v) \right) 
    \, \varphi\, \mathrm{sign}(u-v)
    \, \mathrm{d}x \,\mathrm{d}t \,\mathrm{d}y \,\mathrm{d}s,
  \end{eqnarray*}
  so that $L_x = L_1 + L_2 + L_3 + L_4$. Clearly,
  \begin{displaymath}
    L_1 
    \leq
    \int_0^{T+\varepsilon} \int_{\norma{x-x_o}\leq R+M(T_o-t)+\theta}
    \norma{\left((G-F)-\div(f-g)\right)(t, x,\cdot)}_{\L{\infty}}
    \mathrm{d}x \,\mathrm{d}t\, .
  \end{displaymath}
  For $L_2$ and $L_3$, we have
  \begin{eqnarray*}
    L_2 
    &\leq & 
    \norma{\pt_u(F-G)}_{\L\infty} \int_{\rpis} \int_{\reali^N}
    \int_{\rpis} \int_{\reali^N} 
    \modulo{u(s,y) - v(t,x)} \,\varphi 
    \, \mathrm{d}x \, \mathrm{d}t \, \mathrm{d}y \, \mathrm{d}s \,,
    \\
    L_3
    & = &
    \int_{\rpis} \int_{\reali^N} \int_{\rpis} \int_{\reali^N} 
    \mathrm{sign}(u-v) 
    \left(
      \int_{v}^{u} 
      \left(\pt_u \div f(t,x,w) + \pt_u F(t,y,w) \right) 
      \mathrm{d}w
    \right)
    \varphi
    \,\mathrm{d}x \,\mathrm{d}t\,\mathrm{d}y\,\mathrm{d}s
    \\
    &\leq &
    \left(
      N \norma{\nabla\pt_u f}_{\L{\infty}}
      +
      \norma{\pt_u F}_{\L\infty}
    \right)
    \int_{\rpis} \int_{\reali^N} \int_{\rpis} \int_{\reali^N}
    \modulo{v(t,x)-u(s,y)} \varphi
    \, \mathrm{d}x \, \mathrm{d}t \, \mathrm{d}y \, \mathrm{d}s \,.
  \end{eqnarray*}
  Proceeding as for $J_x$, we find the following bound for $\iiiint
  \modulo{v(t,x)-u(s,y)}\varphi$ in $L_2$, $L_3$.
  \begin{eqnarray*}
    L_2 + L_3
    & \!\leq \!&
    \left( 
      N \norma{\nabla\pt_u f }_{\L{\infty}}
      +
      \norma{\pt_u F}_{\L\infty}
      +
      \norma{\pt_u(F-G)}_{\L\infty}
    \right)
    \\
    & & 
    \times 
    \biggl[
    \int_0^{T+\epsilon} \!\!\!\! \int_{B(x_o,R+M(T_o-t)+\theta)} \!\!
    \modulo{v(t,x)-u(t,x)} 
    \mathrm{d}x \mathrm{d}t
    +(T+\epsilon) \sup_{\tau\in[0,T+\epsilon]} \!\!\!\!
    \tv\left(u(\tau) \right) \gd
    \\
    &  &
    \qquad \quad
    +
    (T+\epsilon) 
    \sup_{t\in [0,T+\epsilon] \atop s\in\left]t,t+\pd\right[}
    \int_{\norma{y-x_o}\le R+\gd+M(T_o-t)+\theta}
    \modulo{u(t,y)-u(s,y)}
    \, \mathrm{d}y
    \biggr]\,.
  \end{eqnarray*}
  For $L_4$ we have
  \begin{eqnarray*}
    L_4
    & = &
    \int_{\rpis} \int_{\reali^N} \int_{\rpis} \int_{\reali^N} 
    \left[
      \int_0^1 \nabla (F-\div f) \left(t,rx+(1-r)y, v\right) \cdot
      (y-x) 
      \, \mathrm{d}r 
    \right] \varphi
    \\
    & &
    \qquad \qquad \qquad
    \times 
    \mathrm{sign}(u-v)
    \, \mathrm{d}x \, \mathrm{d}t \, \mathrm{d}y \, \mathrm{d}s
    \\
    & \leq &
    \gd \, M_1
    \int_0^{T+\epsilon} \int_{\reali^N} 
    \norma{\nabla (F-\div f)(t,x,\cdot)}_{\L{\infty}}
    \mathrm{d}x \, \mathrm{d}t
    \,.
  \end{eqnarray*}
  To complete the proof, it is sufficient to note that $L_x = L_1 +
  L_2 + L_3 + L_4$.
\end{proof}

\small{

  \bibliography{crm}

  \bibliographystyle{abbrv} }

\end{document}